\newcommand{\W}{{\sf Weight}}
\newcommand{\ra}[1]{\overleftarrow{#1}}
\newcommand{\overbar}[1]{\overline{#1}}
\def \A#1#2{{\sf Trees}_{#2}(#1)}
\def \E{\mathbb{E}}
\newtheorem{lem}{Lemma}
\newtheorem{defi}[lem]{Definition}
\newtheorem{pro}[lem]{Proposition}
\newtheorem{theo}[lem]{Theorem}
\newtheorem{note}[lem]{Note}
\newtheorem{rem}[lem]{Remark}
\numberwithin{equation}{section}
\numberwithin{lem}{section}
\def \bls{{\tiny $\blacksquare$ \normalsize }}
\def \1{\textbf{1}}
\def \Z{\mathbb{Z}}
\def \ar#1{\overrightarrow{#1}}
\def \TTT{{\mathsf{T}}}
\def \A{{\mathsf{A}}}
\def \J{{\mathsf{J}}}
\def \GG{{\mathsf{G}}}
\def \bpar#1{\left\{\begin{array}{#1} }
\def \epar { \end{array}\right.}
\def \N{\mathbb{N}}
\def \ba{\begin{align}}
\def \ea{\end{align}}
\def \be{\begin{eqnarray*}}
\def \ee{\end{eqnarray*}}
\def \ben{\begin{eqnarray}}
\def \een{\end{eqnarray}}
\def \beq{\begin{equation}}
\def \eq{\end{equation}}
\def \build#1#2#3{\mathrel{\mathop{\kern 0pt#1}\limits_{#2}^{#3}}}
\def \ba{{\bf a}}
\def \dis{\displaystyle}
\def \equi{\Leftrightarrow}
\def \eref#1{(\ref{#1})}
\def \P{{\mathbb{P}}}
\def \imp{\Rightarrow}
\def \l{\left}
\def \r{\right}
\def \sous#1#2{\mathrel{\mathop{\kern 0pt#1}\limits_{#2}}}
\def \sur#1#2{\mathrel{\mathop{\kern 0pt#1}\limits^{#2}}}
\def\cro#1{[#1]}
\newcommand{\compact}{ \topsep0pt   \itemsep=0pt   \partopsep=0pt   \parsep=0pt}
\newcounter{c}
\def \bir{\begin{itemize}\compact \setcounter{c}{0}}
\def \itr{\addtocounter{c}{1}\item[($\roman{c}$)]} 
\def \eir{\end{itemize}\vspace{-2em}~}
\newcounter{d}
\def \bia{\begin{itemize}\compact \setcounter{d}{0}}
\def \eia{\end{itemize}\vspace{-2em}~}
\newcounter{b}
\def \bi{\begin{itemize}\compact \setcounter{b}{0}}
\def \ei{\end{itemize}\vspace{-2em}~}
\def \Id{{\sf Id}}
\def \U{{\sf U}}
\def \L{{\sf L}}
\def \M{{\sf M}}
\def \DK{{\sf D}}
\def \Loop{{\sf Loop}}
\def \lt{{almost lower-triangular}}
\def \sut{{\textsf{A\small{llmost\,}\,U\small{pper}\,T\small{riangular}}}}
\def \slt{{\textsf{A\small{llmost\,}\,L\small{ower}\,T\small{riangular}}}}
\def \sut{\includegraphics[width=0.32cm]{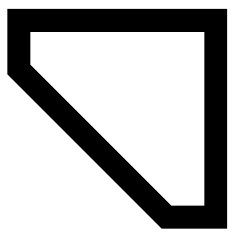}}
\def \slt{\includegraphics[width=0.32cm]{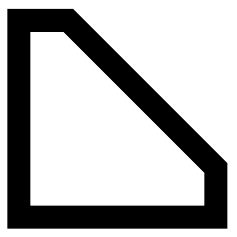}}
\def \Mp#1{{\cal M}_{#1}^{+}} 
\def \Mpq#1{{\cal M}_{#1}^{+,\sim}}
\def \DI#1#2{{\det\l( \Id-#1_{#2}\r)}}
\def \se{{\sf e}}
\def \bma{\begin{bmatrix}}
\def \ema{\end{bmatrix}}
\begin{document}
\begin{center}
 \LARGE\bf
Almost triangular Markov chains on $\N$\\
 {\large \bf Luis Fredes$^{\dagger}$ and Jean-Fran\c{c}ois Marckert$^{*}$}
 \rm \\
  \large{$^\dagger$Universit\'e Paris-Saclay.\\
 	$^{*}$CNRS, LaBRI, Universit\'e Bordeaux}

 \normalsize 
\end{center}
 \begin{abstract} A transition matrix $\bma\U_{i,j}\ema_{i,j\geq 0}$  on $\mathbb{N}$ is said to be almost upper triangular if $\U_{i,j}\geq 0\imp j\geq i-1$, so that the increments of the corresponding Markov chains are at least $-1$; a transition matrix $\bma \L_{i,j} \ema_{i,j\geq 0}$ is said to be almost lower triangular  if $\L_{i,j}\geq 0\imp j\leq i+1$, and then, the increments of the corresponding Markov chains are at most $+1$.
  
  In the present paper, we characterize the recurrence, positive recurrence and invariant distribution for the class of almost triangular transition matrices. The upper case appears to be the simplest in many ways, with existence and uniqueness of invariant measures, when in the lower case, existence as well as uniqueness are not guaranteed. We present the time-reversal connection between upper and lower almost triangular transition matrices, which provides classes of integrable lower triangular transition matrices.\par
  These results encompass the case of birth and death processes (BDP) that are famous Markov chains (or processes) taking their values in $\mathbb{N}$, which are simultaneously almost upper and almost lower triangular, and whose study has been initiated by Karlin \& McGregor in the 1950's. They found invariant measures, criteria for recurrence, null recurrence, among others; their approach relies on some profound connections they discovered between the theory of BDP, the spectral properties of their transition matrices, the moment problem, and the theory of orthogonal polynomials. Our approach is mainly combinatorial and uses elementary algebraic methods; it is somehow more direct and does not use the same tools.
\end{abstract}
 
 \section{Introduction} 
\label{sec:Intro}
\paragraph{Notation and conventions.} The set of non-negative integers is denoted $\N$. For two integers $a<b$, $[a,b]$ will be the set of integers $\{a,a+1,\cdots,b\}$. The word ``interval'' will be added when standard real intervals are considered. \par
A transition matrix over a finite or countable state space $S$ is a matrix $\bma\M_{i,j}\ema_{i,j \in S}$ indexed by $S$, such that the $\M_{i,j}$ are non-negative real numbers, and sums to one on each row of the matrix.\par
The set of non-negative measures over $S$ (equipped with the power set sigma field), with total mass being strictly positive, or infinite, is written ${\cal M}_S^+$. \par
A measure $\pi\in{\cal M}_S^+$ is said to be invariant by $\M$ if
\ben\label{eq:fjth}\sum_{a\in S}\pi_a\M_{a,b}=\pi_b,~~~\textrm{ for all}~~b\in S.\een
Often, we will see $\pi$ as a row matrix $\pi:=\bma \pi_x, x\in S\ema$, and \eref{eq:fjth} will be written $\pi \M=\pi$. \par

We denote by $\Mpq{S}$ the set of equivalence classes of positive measures consisting of those which are equal up to a positive factor. Since the invariance by $\M$ is a class property, we will say that a transition matrix $\M$ has a single (resp. several) invariant measure in $\Mpq{S}$, when there is a single (resp. several) class of invariant measures. \par
The adjective ``recurrent'', ``irreducible'', ``aperiodic'', and ``positive recurrent'' will qualify indifferently transition matrices and Markov chains.

For any subset $F$ of $S$, $\M_{F}$ is the matrix obtained by keeping only the lines and columns of $\M$ indexed by elements of $F$ (that is $\M_{F}=\bma\M_{i,j}\ema_{i,j \in F}$). \par
The identity matrix is denoted $\Id$, and this, whatever its size is (which will be however clear from the context). For example, we will simply write $(\Id-\M)_F=\Id-\M_F$, without adding the precision that in the left-hand side, $\Id$ has size the cardinality of $S$, and in the right-hand size, that of $F$.\par

\centerline{---------------------------}

A transition matrix $\U=\begin{bmatrix} \U_{i,j}\end{bmatrix}_{0\leq i,j\leq +\infty}$  on $\mathbb{N}$ is said to be  almost upper-triangular (\sut{}) if
\ben \U_{i,j}>0 \imp  j\geq i-1\een
and a transition matrix $\L=\begin{bmatrix} \L_{i,j}\end{bmatrix}_{0\leq i,j\leq +\infty}$ is said to be almost lower-triangular (\slt{}) if  \ben\L_{i,j}>0\imp j\leq i+1.\een
Here is some ``pictures'' explaining the iconographic notation \sut{} and \slt{} of the main objects:
\[\U:=\begin{bmatrix}
\U_{0,0} & \U_{0,1} &  \U_{0,2} & \U_{0, 3} & \U_{0, 4}  &\cdots \\
 \U_{1,0}& \U_{1,1} &  \U_{1,2} & \U_{1, 3} &  \U_{1, 4} &\cdots \\
 0      & \U_{2,1} & \U_{2,2} & \U_{2, 3} &  \U_{2, 4} &\cdots \\
 0      & 0      & \U_{3,2} & \U_{3, 3} &  \U_{3 , 4} &\cdots \\
 \vdots &  \vdots &  \vdots      & \ddots & \ddots &\cdots
\end{bmatrix},~~~~\L:=\begin{bmatrix}
\L_{0,0} & \L_{0,1} &  0       & 0        & 0        &\cdots \\
\L_{1,0} & \L_{1,1} &  \L_{1,2} & 0        & 0        &\cdots \\
\L_{2,0} & \L_{2,1} & \L_{2,2} & \L_{2, 3} & 0          &\cdots \\
\L_{3,0} & \L_{3,1} & \L_{3,2} & \L_{3, 3} &  \L_{3 , 4} &\cdots \\
 \vdots &  \vdots &  \vdots & \ddots & \ddots &\cdots
\end{bmatrix}\]

This paper aims to provide a first systematic study of Markov chains following a \sut{} or a  \slt{} transition matrix. Our results will appear to generalize birth and death (BD) processes results. These latter form the  famous model of Markov chains on $\mathbb{N}$ having tridiagonal transition matrices, often represented as 
\ben\label{eq:MM}
\TTT =
\begin{bmatrix}
  r_0  & p_0  & 0  & 0 & \cdots \\
  q_1 & r_1 & p_1  & 0 & 0 & \cdots \\
  0  & q_2 & r_2 & p_2  & 0 & \cdots \\
  0  & 0 & q_3 & r_3 & p_3  & \cdots  \\
  \vdots  & \vdots & \ddots & \ddots & \ddots & \ddots 
\end{bmatrix}
\een
where $q_{i}=\TTT_{i,i-1}, r_i=\TTT_{i,i}, p_i=\TTT_{i,i+1}$.
Tridiagonal means that $\TTT_{i,j}>0 \imp |i-j|\leq 1$, and then, this is the class of transition matrices that are  simultaneously \sut{} and \slt{}: the increments of a chain with such a transition matrix  belong to $\{-1,+1,0\}$. 

Two very influential papers published in 1956 by Karlin \& McGregor \cite{KmG,KmG2} showed that the main characteristics of BD models can be exactly computed, and this is a consequence of some particular features of the algebra that comes into play.
We review some of the results they obtained.
Consider $\TTT$ tridiagonal and $\ell$ the maximal $l$ with the following property : $\TTT_{i,i+1}$ and $\TTT_{i+1,i}$ positive for $0\leq i \leq l$  with $l \in\{0,1,\cdots,\}\cup \{+\infty\}$. If $\ell$ is finite, a Markov chain with transition matrix $\M$ is irreducible when restricted to the finite state space $\cro{0,\ell}$. This case can be studied using the finite Markov chain tools (even if BDP on compact sets are interesting from the combinatorial point of view in their own right, see  Flajolet \& Guillemin\cite{FG}).
Karlin \& McGregor results concern the irreducible case over $\N$ (case $\ell=+\infty$). In this case, the Markov chain is reversible $(\pi_{k-1} \TTT_{k-1,k}= \pi_{k}\TTT_{k,k-1}$) with respect to the measure  
\ben\label{eq:sgfqd}
\pi_0=1,\textrm{ and for }a\geq 1,~~ \pi_a=\frac{p_0\cdots p_{a-1}}{q_1\cdots q_a}=\prod_{j=1}^{a} \frac{\TTT_{j-1,j}}{\TTT_{j,j-1}},\een
so that this measure is invariant by $\TTT$. Moreover, this measure is the unique invariant measure for $\TTT$ (in $\Mpq{\N}$).
There exists an invariant probability distribution if and only if 
\ben
\sum_{k\geq 1} \prod_{j=1}^{k} \frac{\TTT_{j-1,j}}{\TTT_{j,j-1}} < +\infty,
\een
and then,  this is also a necessary and sufficient condition for positive recurrence.
A Markov chain with transition matrix $\TTT$ is recurrent if and only if
\ben\label{eq:dqsdt}
\sum_{k\geq 0}\prod_{j=1}^k \frac{\TTT_{j,j-1}}{\TTT_{j,j+1}}=+\infty.\een
There is also a connection with the theory of orthogonal polynomials (see Section \ref{sec:ortho-pol} in which the connection established by Karlin \& McGregor is discussed; we refer to Schoutens \cite{MR1761401} for more information on the connections between probability and orthogonal polynomials theories).
In their papers, Karlin \& McGregor study mainly the continuous-time version of these Markov processes, whose behaviour is similar (up to a random time change) to the discrete version preferred here. The continuous setting is important in their study since differentiation with respect to time of some quantities are considered at many steps of their study. This is not the case in our approach, and we prefer to stick with the discrete-time which makes more natural the use of combinatorial tools: we will discuss the continuous case in Section \ref{sec:CTC} only.

\subsection{Main results and contents of the paper}

The nature and behaviour of almost upper or lower triangular chains are different from those of BDP. The first difference is that if $\M$ is \sut{} (or \slt{}) but not tridiagonal, then $\M$ is not reversible with respect to any positive measure $\pi$. The reason is that for such an $\M$, there exists a pair of indices $(a,b)$ such that $\M_{a,b}>0$ and $\M_{b,a}=0$, which implies that $\pi_a \M_{a,b}=\pi_b\M_{b,a}$ is not possible.

In the paper, we will add the irreducibility hypothesis virtually everywhere: consider the strongly connected components in the graph with vertex set $V=\mathbb{N}$ and directed edge set $E=\{ (i,j), \M_{i,j}>0\}$ for $\M$ being either \sut{} or \slt{}. It is easy to see that the \sut{} (resp. \slt{}) structure imposes all strongly connected components to be intervals of $\N$, since the only down-steps are $-1$ (resp. the only upsteps are $+1$). Hence, there is at most one infinite connected component (which is in this case equivalent, up to change of origin to $[0,+\infty)$), and the Markov chain on each finite component reduces to the study of a Markov chain over a finite state space (we refer to \cite{N98,P08} for more information on Markov chain techniques). Hence, requiring irreducibility in this setting is natural. \medskip

\noindent{\bf Convention:} Unless otherwise stated, all the Markov chains will be irreducible on $\mathbb{N}$. \medskip

Let us recall a fact concerning finite irreducible Markov chains which is important to have in mind before starting the description of our results.
Let $\M=\bma \M_{a,b}\ema_{0\leq a,b\leq N}$
be a standard irreducible transition matrix over $\cro{0,N}$ for some finite $N$.
 Such a Markov chain is positive recurrent, and, by the Perron-Frobeniüs theorem (see for example \cite{M20}), $\M$ possesses a unique invariant probability distribution in $\Mp{\cro{0,N}}$ which is
\ben\label{eq:trehy}
\rho_k = {\det \l( {\sf Id}-\M^{{\sf dep}(k)}\r)}\,/\,{\alpha_N},\textrm{ for } k\in\cro{0,N}
\een
where:\\
. for a matrix $A$, notation $A^{{\sf dep}(k)}$ is the matrix $A$ deprived of its $k$-th column and line,\\
. $\alpha_N$ is the only normalising constant making of $\rho$ a probability distribution.\\

As a consequence of the matrix tree theorem (see e.g. \cite{Z85}), or as consequence of the properties of the Markov chain tree theorem (\cite{Wil96,Bro89,Al90,HLT21,LFJFM}), $\det \l( {\sf Id}-\M^{{\sf dep}(k)}\r)$ is the total weight of  the set  ${\sf SpanningTrees}^{\bullet}(r)$ of rooted spanning trees $({\sf t},r)$ of the oriented graph  $G=(V,E)$ where $V=\cro{0,N}$, and $E=\{(a,b): \M_{a,b}>0\}$. More precisely it states that each spanning tree $({\sf t},r)$ is an oriented graph $(V^{{\sf t},r},E^{{\sf t},r})$, whose edges  $e=(e_1,e_2)\in E^{{\sf t},r}$ are oriented towards the root $r$ and whose weight is defined as
\ben\label{weight}{\sf Weight}({\sf t},r)=\prod_{e \in E^{{\sf t},r}} \M_{e_1,e_2},\een
  and the matrix tree theorem in these settings writes as
\ben\label{eq:rgzf}\det \l( {\sf Id}-\M^{{\sf dep}(r)}\r)=\sum_{({\sf t},r)\in {\sf SpanningTrees}^{\bullet}(r)} {\sf Weight}({\sf t},r).\een

\subsection{Content of the paper}
\paragraph{Section \ref{sec:Mut} collects the main results concerning \sut{} transition matrices.} In \Cref{theo:InvDistU} we establish that each irreducible \sut{} transition matrix $\U$ has a single invariant measure $(\pi_a,a\geq 0)$ in $\Mpq{\N}$ where
\[\pi_a=\pi_0\, \frac{\det( \Id-\U_{[0,a-1]})}{\prod_{j=1}^{a}\U_{j,j-1}},~~a\geq 1,\]
which provides a characterization for positive recurrence ($\sum_a \pi_a<+\infty$).
A necessary and sufficient condition for recurrence is given in \Cref{theo:dfdqdq}: $\lim_{b=+\infty} \U_{1,0} \frac{\det(\Id -\U_{[2,b-1]})}{\det(\Id-\U_{[1,b-1]})} = 1$. This is done thanks to some explicit formulas for the distribution of the hitting time $\tau_{S}(Y)$ of a set $S$ by a Markov chain $(Y_i, i\geq 0)$ with transition matrix $\U$ (\Cref{theo:dfdqdq} and \Cref{pro:ht}), notably, it is established that $\P(\tau_{\{0\}}(Y)<\tau_{[b,+\infty)}(Y)~|~Y_0=1)= \U_{1,0} \frac{\det(\Id -\U_{[2,b-1]})}{\det(\Id-\U_{[1,b-1]})}$.\par
The case where absorption at 0 occurs with a certain probability is discussed in \Cref{rem:abs0}, and the probability of absorption is computed.

For each $n\geq 0$, the projection transition matrix $\U^{(n)}$ is defined by restricting $\U$ to $\cro{0,n}$ (up to some boundary details, see \eref{eq:gsdf}). Since $\U^{(n)}$ is a finite state space transition matrix, when irreducible, it possesses a unique invariant distribution $\rho^{(n)}$. In \Cref{theo:equiv}, the convergence of $\rho^{(n)}$ (after  normalisation if necessary) to the unique invariant measure $\pi$ of $\U$ in $\Mpq{\N}$ is established.  \par
Last, in \Cref{pro:qfqeh2}, it is established that  some irreducible \sut{} transition matrices $\U$ have several non-proportional right eigenvectors associated with the eigenvalue 1.

\paragraph{Section \ref{sec:ALTC} collects the main results concerning \slt{} transition matrices.} 

First of all, the class of \slt{} transition matrices appear to be more complex to study than \sut{} transition matrices. Probably, the simplest explanation is the role played more or less directly by the invariance measures $\pi$ in the study of a \slt{} transition matrix $\L$. In the \slt{} case, an invariant measure is solution to $\pi_a=\sum_{b:b\geq a-1} \pi_b\L_{b,a}$, so that it relates $\pi_{a-1}$ with an infinite number of $\pi_b$ with larger indices, while in the \sut{} case,  $\pi_a=\sum_{b:b\leq a+1} \pi_b\U_{b,a}$ allows expressing $\pi_{a+1}$ with the $\pi_b$ with smaller indices (a triangular system, easy to solve, with a unique solution).

Hence, in the \slt{} case, neither uniqueness nor existence of invariant measures are guaranteed (\Cref{theo:hetjyfd}). 
In \Cref{theo:rec}, we give a characterization of \slt{} transition matrices $\L$ that are recurrent (the condition is $\lim_{b\to +\infty} \frac{\prod_{j=1}^{b-1}\L_{j,j+1}}{\det(\Id-\L_{[1,b-1]})}=0$). This is done also by the study of the distribution of the hitting times $\tau_{\{j\}}(Y)$ of a Markov chain with transition matrix $\L$.
The case where absorption at 0 occurs with a certain probability is discussed in \Cref{rem:abs0l}, and the probability of absorption is computed.
In Theorem \ref{theo:f978sdqs}, it is shown that the invariant distribution of a \slt{} transition matrix $\L$  on the \underbar{finite} set $[0,s]$, is proportional to $(\eta_a, a\in [0,s])$ where $\eta_a = \eta_0\,\det\l(\Id-\L_{[a+1,s]}\r)~\prod_{i=1}^{a}\L_{i-1,i}$. This result allows to state \Cref{pro:ruygfs} and \Cref{pro:thezyr} which provide some conditions for the convergence of the (rescaled) invariant distribution $\eta^{(m)}$ of the projected transition matrix $\L^{(m)}$ to an invariant measure $\eta$ of $\L$. 

\paragraph{Connections between \slt{} and \sut{} transition matrices}
The time-reversal of a trajectory with jump bounded from above by 1, is a trajectory with jump bounded from below by $-1$... so that it is tempting to guess that the time-reversal of a \sut{} Markov chain is a \slt{} Markov chain, and vice-versa (under their stationary regime). It turns out that the complete picture is more complex than that because \sut{} transition matrices have a single invariant measure, when the existence and uniqueness of invariant measure are not guaranteed in the \slt{} case. Hence:\\
-- time-reversal of \sut{} transition matrices are \slt{} transition matrices, \\
-- time-reversal to \slt{} transition matrices, may exist or not, and in the case where $\L$ possesses several invariant measures, several time-reversals of $\L$ can be defined, all being \sut{} transition matrices (see  Theorems \ref{theo:conn} and \ref{theo:dqti}. Recurrence and positive recurrence of any associated time reversed transition matrices are shown to be equivalent to those of $\L$.

Since \slt{} transition matrices are in general more difficult to study that \sut{} transition matrices, finding the time-reversal $\U$ of a transition matrix $\L$ provides at once an important tool to study the behaviour of $\L$-Markov chains. We then provide some results allowing one to better understand the algebraic relation between pairs $(\L,\U)$, time-reversal of each other with respect to some measures (\Cref{pro:rytou}, and \Cref{rem:LU}).

In Section \ref{sec:cata}, we made a slight change of presentation of \slt{} transition matrices $\L$ using the so-called descent kernel
\be
\bpar{ccl}
\L_{b,a } & = & v_{b}\, \DK_{b,a}, \textrm{ for }b\geq a,\\
\L_{b,b+1}  & = & 1-v_{b},~~~~b\geq 0.\epar
\ee
Hence, $v_b$ is the probability of ``go down'' from level $b$, and $\DK_{b,a}$ is the probability to descend from $b$ to $a$, when the ``go down'' direction is chosen. This representation, of course equivalent to the initial representation of \slt{} transition matrices, provides some different formulas for the researched time-reversal transition matrix $\U$ (which involves $\DK$ too, see \Cref{pro:frsgr}).\par
 In Section \ref{sec:catal}, we will change a bit of perspective -- fix $\DK$ but let $(v_i,i\geq 0)$ be freely chosen: this will provide a way to construct many integrable \slt{}-transition matrices $\L$ (\Cref{theo:machin}). This point of view is reminiscent of ``catastrophe transition matrices''  in which the descend transition matrix is the important feature of the model. This allows us to revisit some known results of the literature (see Section \ref{sec:CK}).

In Section \ref{sec:sqshtqdf} we show that our results are equivalent to the results of Karlin \& McGregor in the tridiagonal case (our formulas use determinants when it is not the case for those of Karlin \& McGregor, so that proofs are needed). 

In Section \ref{eq:fqrfdqe}, we provide a  family of integrable \slt{} transition matrices: in words, when the columns of $\L$ are almost proportional (see Definition \ref{def:fqgrzd}), then the system which allows to compute the invariant distribution is triangular (in some sense), and then can be solved. 

In Section \ref{sec:hit}, another family of integrable models is given: these are some models of \slt{} and \sut{} that can be expressed in terms of birth-death processes decomposed between some stopping times.

In Section \ref{sec:RPMC}, a fourth list of integrable models, called the repair shop Markov chain, is revisited, and treated with our main theorems (criterion of recurrence and positive recurrence are found using new methods).

Section \ref{sec:CTC} is devoted to continuous-time counterparts of our models of \slt{} and \sut{} Markov chains.

Finally, since many proofs we give use combinatorial facts (notably matrix tree theorem and heap of pieces techniques), Section \ref{sec:Tpmt} recalls these tools.

\subsection{Tools for the proofs of the main theorems}
\label{sec:Tpmt}
\subsubsection*{About the determinants of almost triangular matrices}
\label{sec:adatm}

\begin{lem}\label{lem:etyyu}
  Let $A=\begin{bmatrix}A_{i,j} \end{bmatrix}_{0\leq i,j \leq N}$ be a \underbar{finite} \sut{} matrix  (transition matrix or not, with complex coefficients). Denote by $S^{N}$ the set of increasing integer valued sequences $s=(s_0,\cdots,s_k)$ with $1\leq k\leq N$, $s_0=-1$ and $s_k= N$. For such a sequence denote by $\ell(s)=k$ its final index. We have 
     \begin{align}\label{det0}
\det(A)=  \sum_{j=0}^{N}  A_{0,j} \l(\prod_{i=1}^{j} (-A_{i,i-1})\r)\det(A_{[j+1,N]}).
\end{align}
As a consequence, 
\begin{align}\label{det1}
	\det(A)=   \sum_{s\in S^N} \l(\prod_{j=1}^{\ell(s)} A_{s_{j-1}+1,s_j}\r)\prod_{j \in[0,N-1]\setminus s} (-A_{j+1,j})
  \end{align}
where $[0,N-1]\setminus s$ is the set obtained by removing the elements of $s$ from the set $[0,N]$.
\end{lem}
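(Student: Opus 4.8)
The plan is to prove the cofactor-type identity \eref{det0} first, and then to obtain \eref{det1} by unfolding \eref{det0} recursively. To establish \eref{det0}, I would expand $\det(A)$ along its first row (row $0$), writing $\det(A)=\sum_{j=0}^{N}(-1)^{j}A_{0,j}\,M_{0,j}$, where $M_{0,j}$ is the minor obtained by deleting row $0$ and column $j$. The point is to identify $M_{0,j}$ using the \sut{} structure. The matrix $B$ producing this minor has rows indexed by $[1,N]$ and columns by $[0,N]\setminus\{j\}$, kept in their natural order. For a column index $c\leq j-1$, the implication $A_{r,c}\neq 0\imp c\geq r-1$ forces $r\leq j$; hence every entry of $B$ lying in rows $[j+1,N]$ and columns $[0,j-1]$ vanishes, so $B$ is block upper-triangular with diagonal blocks $P$ (rows $[1,j]$, columns $[0,j-1]$) and $A_{[j+1,N]}$. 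Therefore $M_{0,j}=\det(P)\,\det(A_{[j+1,N]})$. In its own indexing, $P$ has $(a,b)$-entry $A_{a,b-1}$ for $a,b\in[1,j]$, which is nonzero only if $b\geq a$, so $P$ is upper-triangular with diagonal $A_{a,a-1}$ and $\det(P)=\prod_{i=1}^{j}A_{i,i-1}$. Absorbing the sign via $(-1)^{j}\prod_{i=1}^{j}A_{i,i-1}=\prod_{i=1}^{j}(-A_{i,i-1})$ yields \eref{det0}.

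To deduce \eref{det1}, I would argue by induction on $N$ (the base case being a $1\times 1$ block, $\det=A_{0,0}$, matching the sequence $s=(-1,0)$), applying \eref{det0} once and expanding each $\det(A_{[j+1,N]})$ by the induction hypothesis. The index $j$ plays the role of $s_1$: the factor $A_{0,j}=A_{s_0+1,s_1}$ is the ``top'' entry of the first block $[0,j]$ (with $s_0=-1$), while $\prod_{i=1}^{j}(-A_{i,i-1})=\prod_{i\in[0,j-1]}(-A_{i+1,i})$ supplies exactly the subdiagonal factors indexed by $[0,j-1]$. Once $s_1=j$, no element of a sequence $s\in S^{N}$ lies in $[0,j-1]$, so these are precisely the factors of $\prod_{i\in[0,N-1]\setminus s}(-A_{i+1,i})$ coming from the first block. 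The induction hypothesis applied to the \sut{} matrix $A_{[j+1,N]}$ expands it over the decomposition sequences of $[j+1,N]$, i.e. over the tails $(s_1,s_2,\dots,s_k)=(j,s_2,\dots,N)$; the map $s\mapsto(s_1,\text{tail on }[j+1,N])$ is a bijection between $\{s\in S^N:s_1=j\}$ and such decompositions, matching the ``top'' factors $A_{s_{m-1}+1,s_m}$ and the remaining subdiagonal factors on $[j+1,N-1]\setminus s$ term by term. Summing over $j$ reassembles the full sum over $S^{N}$, giving \eref{det1}.

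The main obstacle is the identification of $M_{0,j}$ in the first step: one must notice that deleting row $0$ and column $j$ produces a block upper-triangular matrix (the vanishing of the lower-left block being exactly the \sut{} hypothesis) and that its leading block is itself triangular with the subdiagonal entries $A_{i,i-1}$ on its diagonal; the cofactor sign $(-1)^{j}$ then packages cleanly into $\prod_{i=1}^{j}(-A_{i,i-1})$. Once \eref{det0} is in hand, \eref{det1} is essentially bookkeeping, the only care being to check that the signs and the two products match the claimed indexing over $S^{N}$; I would verify this on the recursive step above rather than by a direct permutation count, though a direct argument grouping the permutations $\sigma$ with $\sigma(i)\geq i-1$ into descending cycles on consecutive blocks $[s_{m-1}+1,s_m]$ would give \eref{det1} in one stroke as an alternative.
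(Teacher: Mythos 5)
Your proof is correct and takes essentially the same route as the paper: expand $\det(A)$ along row $0$, use the almost upper-triangular hypothesis to see that the minor factors as $\prod_{i=1}^{j}A_{i,i-1}\cdot\det(A_{[j+1,N]})$ (the paper phrases this as "the diagonal entries of the first $j$ columns must be selected", which is your block/triangular argument), and obtain \eref{det1} by recursively applying \eref{det0}. Your write-up is simply more detailed than the paper's, especially in the induction bookkeeping for \eref{det1}.
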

\begin{rem} If $A$ is \slt{}, then it is immediate by transposition that
  \[\det(A)= 
  \sum_{s\in S^N} \l(\prod_{j=1}^{\ell(s)} A_{s_j,s_{j-1}+1}\r)\prod_{j \in[0,N-1]\setminus s} -A_{j,j+1}.\]
  \end{rem}
\begin{proof}
  Expand the determinant along the first line: 
  $\det(A)=\sum_{j=0}^{N} (-1)^j A_{0,j} \det(A^{{\sf dep}(0,j)})$
  where  the matrix $A^{{\sf dep}(0,j)}$ is  obtained by removing line 0 and column $j$ of $A$. Now, the conclusion follows the fact that in the matrix $H:=A^{{\sf dep}(0,j)}$ the $j$ first columns have non zero entries only above the diagonal, i.e.  $(H_{a,b}>0, b<j) \imp a\geq b$. Hence when one expands the determinant, to get a non zero result, the diagonal entries of the first $j$ columns must be selected, and then they are multiplied by $\det(A_{[j+1,N]})$, this concludes \eqref{det0}. Formula \eqref{det1} is proved  by recursively applying \eqref{det0}. 
\end{proof}

\subsubsection{The matrix tree theorem and related facts}
\label{sec:MTT}
Let $(G,W)$ be a weighted oriented graph, where $G:=(V,E)$ is a graph, $V$ is the set of nodes, $E\subset V^2$ the set of edges. As usual, the (oriented) edge $e=(u,v)$ is oriented toward $v$, and its weight is $W_{u,v}$. The matrix tree theorem asserts that
\[\sum_{({\sf t},r)\in{\sf SpanningTrees}^{\bullet}(r)} \prod_{e \in E^{\sf t}} W_{e}= \det\l({\sf Laplacian}(W)^{{\sf dep}(r)}\r)\]
where each edge of the tree $({\sf t},r)$ is oriented toward the root $r$,  and where ${\sf Laplacian}(W)^{{\sf dep}(r)}$ is the Laplacian matrix of $W$, in which the $r$th line and column have been removed.
When $W_{u,v} =\M_{u,v}$ for a transition matrix $\M$, ${\sf Laplacian}(\M)^{{\sf dep}(r)}= (\Id-\M)^{{\sf dep}(r)}$ (this is equivalent to \eref{eq:rgzf}).

\begin{defi} Let ${\sf Roots}\subset V $ be a set of roots, and ${\sf Nodes}\subset V$ a set of nodes (``of other nodes'', we should say). We denote by ${\sf Forests}({\sf Nodes},{\sf Roots})$ the set of forests,  a forest being  a sequence  of rooted trees $\{({\sf t}_1,r_1),\cdots,({\sf t}_k,r_k)\}$, satisfying the following constraints:\\
-- at least one tree $(k\geq 1)$,\\
-- the set of nodes $V({\sf t}_i)$ of the ${\sf t}_i$ are disjoint, and all of them are included in  ${\sf Nodes}\cup{\sf Roots}$,\\
-- the set ${\sf Nodes}$ is spanned (the union $\cup_{i=1}^k V({\sf t}_i)\supset {\sf Nodes}$),\\
-- the set of roots $\{r_1,\cdots,r_k\}\subset {\sf Roots}$,\\
-- no outgoing edges from the elements of roots: if $(u,v)\in \cup_i E({\sf t}_i)$ then $u\not\in {\sf Roots}$.\end{defi}
For any forest ${\sf f}=\{({\sf t}_1,r_i),\cdots,({\sf t}_k,r_k)\}$, set
\[{\sf Weight}_W({\sf f})= \prod_{i=1}^k {{\sf Weight}}({\sf t}_i,r_i),\]
where ${{\sf Weight}}({\sf t}_i,r_i)$ is as in \eqref{weight} with $W$ in place of $\M$ (the edges of each tree are oriented toward their root).
\begin{pro}\label{pro:pos} Consider the graph $G=(\mathbb{N}, \{(i,j): \U_{i,j}>0\})$, weighted by the transition matrix $\U$, that is $W=\U$.
  We have
  \ben
 \sum_{ F \in {\sf Forest}([0,x-1],[x,+\infty))} {\sf Weight}_\U(F)= \det( (\Id-\U)_{[0,x-1]}).\een
  \end{pro}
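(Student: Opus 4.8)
The plan is to apply the matrix tree theorem to a suitably modified graph in which the entire half-line $[x,+\infty)$ is collapsed into a single absorbing root, thereby converting the forest count on the left-hand side into a rooted spanning tree count that the matrix tree theorem evaluates as a determinant. Concretely, I would introduce the finite graph $\widehat G$ on vertex set $[0,x-1]\cup\{\star\}$, where $\star$ is a new vertex representing the whole block $[x,+\infty)$. For $i,j\in[0,x-1]$ the edge $(i,j)$ keeps its weight $\U_{i,j}$, and the weight of the edge $(i,\star)$ is set to $\sum_{k\geq x}\U_{i,k}$, i.e.\ the total mass that level $i$ sends into $[x,+\infty)$. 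Because $\U$ is \sut{}, only levels $i\geq x-1$ can reach $[x,+\infty)$ in one step, and more importantly no vertex in $[0,x-1]$ receives an edge from $[x,+\infty)$ at index below $x$ except possibly at the boundary via the $-1$ step; I will need to check that the collapse does not create spurious edges into $[0,x-1]$, which it does not since such in-edges would land on vertices $\geq x-1$ only.

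The key observation is a bijection between forests in ${\sf Forest}([0,x-1],[x,+\infty))$ and rooted spanning trees of $\widehat G$ rooted at $\star$, and that this bijection is weight-preserving once we expand the collapsed edge weights. A forest $F=\{({\sf t}_1,r_1),\dots,({\sf t}_k,r_k)\}$ spanning $[0,x-1]$ with roots in $[x,+\infty)$ corresponds to the tree on $\widehat G$ obtained by redirecting, for each tree ${\sf t}_i$, the unique root-edge of $r_i$ so that it points to $\star$; the weight contributed by that redirected edge is exactly $\U_{\text{(parent of }r_i),\,r_i}$ summed over the possible targets in $[x,+\infty)$, which is how the collapsed weight $\sum_{k\geq x}\U_{\cdot,k}$ arises. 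Summing ${\sf Weight}_\U(F)$ over all forests therefore equals the total weight of spanning trees of $\widehat G$ rooted at $\star$, and by the matrix tree theorem (in the form stated just above, \eref{eq:rgzf}) this total weight is $\det\bigl((\Id-\widehat\U)^{{\sf dep}(\star)}\bigr)$, where $\widehat\U$ is the transition matrix of $\widehat G$.

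The final step is to identify $\det\bigl((\Id-\widehat\U)^{{\sf dep}(\star)}\bigr)$ with $\det((\Id-\U)_{[0,x-1]})$. Deleting the $\star$ row and column from $\Id-\widehat\U$ leaves exactly the $[0,x-1]\times[0,x-1]$ block of $\Id-\U$, since the off-diagonal entries of $\widehat\U$ restricted to $[0,x-1]$ coincide with those of $\U$, and the collapsed edges to $\star$ only affected the deleted $\star$-column; the diagonal entries $1-\U_{i,i}$ are unchanged because $\widehat\U_{i,i}=\U_{i,i}$. Hence the deprived matrix is literally $(\Id-\U)_{[0,x-1]}$, and the two determinants agree, which establishes the claim.

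The main obstacle I anticipate is the bookkeeping in the weight-preserving bijection: I must verify that collapsing $[x,+\infty)$ to $\star$ is legitimate despite this block being infinite, i.e.\ that the forest weights only ever use finitely many edges from $[0,x-1]$ into $[x,+\infty)$ (true, since each of the $x$ forest roots contributes one such edge), and that the collapsed weight $\sum_{k\geq x}\U_{i,k}$ is finite (true, as it is bounded by $1$ by the row-sum condition). Making the ``redirect each root-edge to $\star$'' map a genuine bijection — in particular checking that distinct forests give distinct trees and that every $\star$-rooted tree arises this way — is where I would spend the most care.
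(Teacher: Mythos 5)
Your proposal is correct and is exactly the paper's proof: the paper's own (one-line) argument is ``apply the matrix tree theorem after identifying the set ${\sf Roots}=[x,+\infty)$ with a single node,'' which is precisely your collapsed vertex $\star$ construction, including the identification of the deprived matrix with $(\Id-\U)_{[0,x-1]}$. The only refinement to make in your write-up is that the redirection map is not a bijection but a weight-preserving surjection --- two forests whose root-edges point to different targets in $[x,+\infty)$ collapse to the same $\star$-rooted tree --- and the identity holds because the weights of the forests in each fiber sum to the collapsed weight $\prod_u \sum_{k\geq x}\U_{u,k}$ of that tree, exactly as your ``summed over the possible targets'' remark already indicates (so the last verification you planned, injectivity, should be replaced by this fiber computation).
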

  \begin{proof} This can be viewed as a consequence of the matrix tree theorem in which the set ${\sf Roots}$ is identified with one node.
    \end{proof}

\subsubsection{Heap of cycles}
\label{sec:HCT}
We recall some aspects of the theory of heaps of pieces \cite{VX,CK}, and more specifically heap of cycles, which be a useful tool to prove some of our results.

Consider $\M$ a transition matrix on a finite or infinite countable graph $G=(V,E)$, meaning that $\M=(\M_{u,v},u,v\in V)$, $\M_{u,v}\geq 0 \imp \{u,v\}\in E$, and as usual, for all $u\in V$, $\sum_{v \in V} \M_{u,v}=1$.

Attribute to each path $w=(w_0,\cdots,w_{|w|})$ on $G$, the weight
\[\W(w)=\prod_{j=1}^{|w|} \M_{w_{j-1},w_j}.\]
A path $w$ is a cycle if $w_{|w|}=w_0$ and if moreover, for all $0\leq i<j<|w|$, $w_i\neq w_j$ (a simple cycle). We extend the map $\W$ to collections of paths $C:=(w(1),\cdots,w(|C|))$ in which case we set
\begin{align}\label{WCycle}\W(C)=\prod_{j=1}^{|C|} \W(w(i)).
	\end{align}

\paragraph{Path decomposition.}

A standard result from combinatorics which has proved its importance notably in the study of loop erased random walks (see e.g. Lawler \cite{Lawler1999}, Wilson \cite{Wil96}, Marchal \cite{Marchal}), is that
\begin{lem}
  There exists a weight preserving bijective map that sends the set of paths on $G$ starting at some point $v$ onto the set of pairs $({\sf saw},{\sf hc})$ where ${\sf saw}$ is a self avoiding walk on $G$ starting at $v$, and ${\sf hc}$ is a heap of cycles with maximal pieces incident to ${\sf saw}$.
\end{lem}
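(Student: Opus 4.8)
The plan is to realize the bijection through the classical \emph{loop-erasure} procedure, turning the erased loops into a heap of cycles. Given a path $w=(w_0,\dots,w_{|w|})$ on $G$ with $w_0=v$, I would first set ${\sf saw}:=LE(w)$, the chronological loop erasure of $w$: scanning $w$ from left to right, each time the walk returns to a vertex already retained, one excises the closed sub-walk created since that vertex was last left and keeps only the surviving self-avoiding skeleton. By construction ${\sf saw}$ is a self-avoiding walk from $v$ ending at $w_{|w|}$, and the multiset of edges of $w$ is partitioned into the edges retained in ${\sf saw}$ and the edges lying in the excised closed sub-walks.

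Second, I would organize the excised part into the heap ${\sf hc}$. Each removed closed sub-walk is itself a closed walk, hence decomposes canonically into simple cycles; declaring two simple cycles \emph{concurrent} exactly when they share a vertex, the collection of all extracted cycles, read in the order in which they are produced, is a word in cycles whose class modulo commutation of disjoint (non-concurrent) cycles is a heap of cycles in the sense of \cite{VX,CK}. The cycles erased while the walk sits at a given vertex $x$ of ${\sf saw}$ are precisely those resting against $x$, and among them the outermost one (extracted last, hence placed on top) is anchored at $x$ itself; this forces every \emph{maximal} piece of ${\sf hc}$ to be incident to ${\sf saw}$, which is exactly the constraint in the statement. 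Weight preservation $\W(w)=\W({\sf saw})\,\W({\sf hc})$ is then immediate from \eqref{WCycle} and the definition of $\W$, since the edges are merely reorganized and $\W$ is multiplicative over edges.

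Third, I would build the inverse explicitly. Given a pair $({\sf saw},{\sf hc})$ in which every maximal cycle of ${\sf hc}$ is incident to ${\sf saw}$, one reconstructs a path by traversing ${\sf saw}$ from $v$ and, at each of its vertices, splicing in the cycles to be removed there from the top of the heap downward: a maximal cycle incident to ${\sf saw}$ at a vertex $x$ is traversed while the walk sits at $x$, a cycle lying just below it shares a vertex with it and is spliced into that traversal at the shared vertex, and iterating down the heap reconstitutes at each ${\sf saw}$-vertex the closed sub-walk that loop erasure had excised. The heap partial order dictates the nesting of cycles that share a vertex, while disjoint cycles may be spliced in either order without altering the resulting path, matching exactly the commutations defining the heap. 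The incidence hypothesis guarantees that every cycle is reachable from ${\sf saw}$ along a descending chain of pairwise concurrent pieces, so the splicing is globally anchored and well defined.

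The delicate point, and the step I expect to be the main obstacle, is to check that these two maps are mutually inverse. This splits into two independent verifications: that the word of cycles produced by the erasure is invariant under the commutations of disjoint cycles, so that ${\sf hc}$ is genuinely a heap and not merely a word; and that the reconstruction of $({\sf saw},{\sf hc})$ returns a path whose loop erasure is again ${\sf saw}$ with the same extracted cycles. Both follow from the fact that loop erasure commits to a vertex of ${\sf saw}$ only after all loops based there have been seen, so that the last-exit decomposition underlying $LE$ is compatible with the heap order; the condition that maximal pieces be incident to ${\sf saw}$ is precisely what certifies that the image of the forward map is the full set of such pairs, with none omitted and none counted twice. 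This recovers the decomposition recalled in \cite{Lawler1999,Wil96,Marchal}.
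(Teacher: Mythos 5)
The paper itself does not prove this lemma: it is recalled as a standard combinatorial fact, with the proof delegated to Viennot \cite[Prop.~6.3]{VX}, and with \cite{Lawler1999,Wil96,Marchal} cited for the loop-erasure context. Your construction is precisely the argument behind those references, so you are not taking a different route from the paper --- you are attempting to supply the proof that the paper chose to cite.

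As a proof, however, your sketch has two concrete defects. First, the sentence ``each removed closed sub-walk is itself a closed walk, hence decomposes canonically into simple cycles'' is wrong as stated: a closed walk admits in general many decompositions into simple cycles, and none of them is canonical. Fortunately, no decomposition is needed. If the chronological loop erasure is run correctly, the retained prefix is self-avoiding at every stage, so when the walk steps back to a retained vertex $x$, the excised piece --- the retained segment from $x$ to the current vertex, together with the returning edge --- is automatically a \emph{simple} cycle. The forward map must be defined this way; with your phrasing (excise the sub-walk of the original path since the last visit to $x$, then decompose ``canonically'') the map is not well defined. Second, the crux, which you yourself flag, is not actually established: asserting that ``loop erasure commits to a vertex of ${\sf saw}$ only after all loops based there have been seen'' is the right intuition but not an argument. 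One still has to check that splicing a pair $({\sf saw},{\sf hc})$ gives a path independent of the linear extension of the heap used to perform the splicing, and that the two compositions are the identity (for instance by induction on the number of pieces); this verification is exactly the content of Viennot's proposition, so as written your proposal is a correct plan with a citation-shaped hole where the proof should be. By contrast, your maximality claim can be made rigorous in one line and you may as well do so: if a maximal piece had been excised at a vertex absent from the final ${\sf saw}$, that vertex would later be erased by a cycle containing it, and that cycle would sit above the piece in the heap, contradicting maximality.
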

(The notion of maximal pieces, if not clear, is defined above \Cref{{pro:dqd}}). See e.g. Prop. 6.3. in Viennot \cite{VX} for additional details (and a proof).
A self avoiding path is a path $w$ such that $w_i=w_j \imp i=j$. A heap of cycles, is a particular instance of the concept of heap of pieces, important combinatorial concept. We refer to Viennot\cite{VX}, Krattenthaler \cite{CK}, Cartier  \& Foata \cite{CF}, Zeilberger \cite{Z85} for details, and just recall some aspects below.\par

A heap of pieces is, \underbar{informally}, a collection of pieces, that are placed on a discrete space ($E\times \mathbb{N}$, where $E$ is a set of elements, and $\N$ is the height space). The definition uses a reflexive and symmetric relation $R$ on the set of pieces $E$. Some pieces are said to be in relation, which implies that they cannot be placed at the same height (if $p R p'$, then $(p,h)$ and $(p',h)$ cannot belong to the same heap); moreover, a piece $(p,h)$, which is then placed at height $h$, must be supported by a piece $(p',h-1)$ at height $h-1$, which is related to it (that if, if $(p,h)$ is in a heap $h$, then $h$  must contain a piece $(p',h-1)$ with $p' R p$). \medskip

There are several ways to define formally the notion of heap of pieces:\\
-- as an element of a partially commutative monoid: if this point of view is adopted, a heap is a word $w_1 \dots w_m$, where the letters $w_i$ belongs to $E$, and in which pair of non-related letters commute (Cartier \& Foata \cite{CF}),\\
-- more geometrically (Viennot \cite{VX}), in which heaps $H$ are viewed as sets of finite sets of pairs $\{(x,i): x\in E, i \in \N\}$, such that
	\begin{enumerate}
		\item If $(x,i), (y,j) \in H$ and $x \mathcal{R} y$, then $i\neq j$ (pieces in relation can not be put at the same height).
		\item If $(x,i)\in H$ and $i>0$, then there exists $(y,i-1)\in H$ with $x\mathcal{R} y$ (each piece must be supported).	\end{enumerate} 

        These points of view are equivalent (Viennot \cite{VX},  Krattenthaler \cite{CK}); each heap $h$ can also be viewed as a poset $(h,\leq)$, where:  \\
         -- in the geometric point of view, $(x,i)\leq (y,j)$ if $i\leq j$ and $xRy$ (and $\leq$ is the transitive closure of this relation),\\
      -- in the Cartier-Foata point of view, for two letters $a$ and $b$ in a word, $a\leq b$ if $aRb$ and $a$ is at the left of $b$, (and $\leq$ is the transitive closure of this relation).\par   
A piece $p$ in $h$ is said to be maximal in $h$, if $h$ does not contain any piece $p'\neq p$ with $p\leq p'$.  Each heap, as a poset, possesses some maximal pieces.\par

      A trivial heap of pieces is a heap in which all pieces are at level 0, which means that the pieces it contains are not in relation.) If one uses the  partially commutative monoid point of view, a trivial heap of pieces is a heap (a word) in which all the pieces (the letters) commute.
\begin{pro}\label{pro:dqd}[Prop.5.3 in \cite{VX}]
	Let $\mathcal{M}$ be a subset of the pieces $\mathcal{B}$.  Let $W$ be a multiplicative weight function on heaps, such that for all heap $H$ its weight $W(H)$ is the product of elementary weights $W(p)$ of the pieces $p$ it contains (the weight of a piece is independent of ``its place or height'' in the heap). Then, the total weight of the heap of pieces having their maximal pieces included  in $\mathcal{M}$ is given by
	\[
		\sum_{\stackrel{H\text{ heaps in }(\mathcal{B},\mathcal{R})}{\text{maximal piece}\subset \mathcal{M}}} W(H) = \Big( \sum_{\stackrel{T\text{ trivial}}{ \text{heap in }(\mathcal{B},\mathcal{R})}} (-1)^{|T|} W(T) \Big)^{-1}\Big( \sum_{\stackrel{T\text{ trivial}}{ \text{heap in }(\mathcal{B}\setminus \mathcal{M},\mathcal{R})}} (-1)^{|T|} W(T) \Big)
	\]
  \end{pro}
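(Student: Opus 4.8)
Proposition~\ref{pro:dqd} is the Cartier--Foata--Viennot inversion lemma for heaps, and I would obtain it by combining the basic ``denominator'' identity for all heaps with a unique factorization of heaps according to their maximal pieces. Throughout, for a subset $X\subseteq\mathcal{B}$ I write $\mathcal{G}_{X}=\sum_{H}W(H)$ for the generating function of all heaps whose pieces lie in $X$ (with the relation $\mathcal{R}$ induced on $X$), and $D_{X}=\sum_{T}(-1)^{|T|}W(T)$ for the alternating sum over trivial heaps on $X$; these are treated as elements of the commutative ring of formal power series in the elementary weights $W(p)$, so that every manipulation below is termwise legitimate. With this notation the claimed formula reads $\mathcal{G}_{\mathcal{M}}=D_{\mathcal{B}}^{-1}\,D_{\mathcal{B}\setminus\mathcal{M}}$, where $\mathcal{G}_{\mathcal{M}}$ is the sum over heaps whose maximal pieces all lie in $\mathcal{M}$.

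First I would prove the basic identity $\mathcal{G}_{\mathcal{B}}\,D_{\mathcal{B}}=1$ by a sign-reversing involution. Expanding the product indexes pairs $(H,T)$ with $H$ a heap and $T$ a trivial heap, carrying sign $(-1)^{|T|}$ and weight $W(H)W(T)=W(P)$, where $P=H\cdot T$ is obtained by stacking $T$ on top of $H$. Fix a linear order on $\mathcal{B}$. The key elementary observation is that two distinct pieces in relation cannot both be maximal in $P$: if $p\,\mathcal{R}\,q$ they sit at different heights, and the lower one is then below the higher one in the heap poset. Consequently the letters of $T$ form precisely a pairwise non-related set $S$ of maximal pieces of $P$, and $(H,T)$ is the same datum as a heap $P$ together with a distinguished subset $S$ of its maximal pieces. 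Letting $m$ be the smallest maximal piece of $P$, the involution toggles whether $m\in S$; this is always legal (a maximal piece is non-related to the other maximal pieces, so it may join $S$, and removing it is harmless), it reverses the sign, and it fixes only the empty heap, which contributes $1$. Hence $\mathcal{G}_{\mathcal{B}}\,D_{\mathcal{B}}=1$, and applying the same argument to the sub-alphabet $\mathcal{B}\setminus\mathcal{M}$ gives $\mathcal{G}_{\mathcal{B}\setminus\mathcal{M}}\,D_{\mathcal{B}\setminus\mathcal{M}}=1$.

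The second and decisive step is a weight-preserving bijection
\[ P\longmapsto (H,K),\qquad P=H\cdot K, \]
where $K$ is the set of pieces $p$ of $P$ such that every piece lying above $p$ (including $p$ itself) belongs to $\mathcal{B}\setminus\mathcal{M}$, and $H=P\setminus K$. By construction $K$ is an order filter of the heap poset, hence a genuine ``top'' sub-heap built on letters of $\mathcal{B}\setminus\mathcal{M}$; and no maximal piece of $H$ can lie in $\mathcal{B}\setminus\mathcal{M}$, since such a piece together with everything above it would be in $\mathcal{B}\setminus\mathcal{M}$ and would have been swept into $K$, so $\mathrm{Maximal}(H)\subseteq\mathcal{M}$. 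Conversely, for any $(H,K)$ with $\mathrm{Maximal}(H)\subseteq\mathcal{M}$ and $K$ a heap on $\mathcal{B}\setminus\mathcal{M}$, the heap $P=H\cdot K$ recovers this exact pair: each letter of $K$ has all its successors inside $K\subseteq\mathcal{B}\setminus\mathcal{M}$, while each letter of $H$ lies below some maximal piece of $H$, which is in $\mathcal{M}$, so it is not swept up. Summing $W$ over this bijection yields $\mathcal{G}_{\mathcal{B}}=\mathcal{G}_{\mathcal{M}}\,\mathcal{G}_{\mathcal{B}\setminus\mathcal{M}}$. Substituting $\mathcal{G}_{\mathcal{B}}=D_{\mathcal{B}}^{-1}$ and $\mathcal{G}_{\mathcal{B}\setminus\mathcal{M}}=D_{\mathcal{B}\setminus\mathcal{M}}^{-1}$ and solving gives $\mathcal{G}_{\mathcal{M}}=D_{\mathcal{B}}^{-1}D_{\mathcal{B}\setminus\mathcal{M}}$, which is exactly the asserted identity.

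I expect the main obstacle to be the rigorous verification that $P=H\cdot K$ is well defined and bijective: that $K$, defined as the set of pieces all of whose successors avoid $\mathcal{M}$, really is an order filter, and that peeling it off leaves no maximal piece outside $\mathcal{M}$. A secondary point requiring care is the meaning of these generating-function identities when $\mathcal{B}$ is infinite (as for heaps of cycles on $\mathbb{N}$): one must work in the completed power-series ring and rely on the summability which, in the applications of this paper, is guaranteed by the absolute convergence of the associated determinants $\det(\Id-\U_{F})$. Finally, it is worth recording why the more tempting \emph{direct} involution fails: toggling the smallest maximal piece of $P$ that lies in $\mathcal{M}$ is unsafe, because lifting an unmarked $\mathcal{M}$-piece into $T$ can expose a piece of $\mathcal{B}\setminus\mathcal{M}$ beneath it and thereby violate the constraint $\mathrm{Maximal}(H)\subseteq\mathcal{M}$; the factorization route above is precisely what sidesteps this difficulty.
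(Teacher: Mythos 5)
Your proof is correct. There is, however, no in-paper argument to compare it against: the paper does not prove \Cref{pro:dqd}, it imports it verbatim from Viennot (Prop.\ 5.3 in \cite{VX}) and Krattenthaler (Thm.\ 4.1 in \cite{CK}), merely restating it at the level of weights rather than of combinatorial objects. What you have written is a complete, self-contained rendition of exactly the classical Cartier--Foata--Viennot argument found in those references: first the inversion identity $\mathcal{G}_{\mathcal{B}}\,D_{\mathcal{B}}=1$, proved by the sign-reversing involution toggling the smallest maximal piece (your key observation that two related pieces cannot both be maximal, so that stacking a trivial heap on top of $H$ is the same datum as a heap $P$ with a distinguished \emph{arbitrary} subset of its maximal pieces, is the right one), and then the unique factorization $P=H\cdot K$, where $K$ is the largest order filter of $P$ whose pieces all lie in $\mathcal{B}\setminus\mathcal{M}$, which yields the multiplicativity $\mathcal{G}_{\mathcal{B}}=\bigl(\sum_{\mathrm{Max}(H)\subset\mathcal{M}}W(H)\bigr)\,\mathcal{G}_{\mathcal{B}\setminus\mathcal{M}}$. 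Two points of hygiene, both routine: (i) your symbol $\mathcal{G}_{\mathcal{M}}$ clashes with your own convention that $\mathcal{G}_{X}$ sums over heaps with \emph{all} pieces in $X$; the left-hand side of the factorization should carry a distinguishing superscript; (ii) in the factorization step you tacitly use that, because $K$ is an order filter, the poset of $H=P\setminus K$ is the restriction of the poset of $P$ (an ascending chain between two pieces of $H$ cannot pass through $K$, since a filter absorbs everything above it); this is what makes ``maximal in $H$'' usable in your sweeping argument and is worth one explicit line.
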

Viennot in \cite[Proposition 5.3]{VX} gave this result at the level of combinatorial objects; here, we preferred a projected version, in terms of their weights (which is what we need). (See also Theorem 4.1 in \cite{CK}).

        In heap of cycles, the pieces are cycles on a given graph $G$, and two cycles are in relation if they share a vertex. The weight of a heap of cycles, according to a transition matrix $\M$, is identified with the weight of the collection of cycles it contains.
        A heap of cycles is then trivial when all the cycles it contains are non-intersecting. Denote by $A_G$ the alternating weight of trivial cycles 
        \[A_G=\sum_{C =(C(1),\cdots,C(|C|)\in {\sf Trivial~heap~of~cycles~ on~}G} (-1)^{|C|} \prod_{j=1}^{|C|} \W(C(j)),\]
        where $W(C(j))$ is as in \eqref{WCycle}.
A simple expansion of the determinant using the cycles present on a permutation allows to get
\ben
A_G= \det\l(\Id-\M\r)=0
\een
and the reason for that is that $\M$ has 1 as an eigenvalue; hence the set of heaps of cycles on $G$ has total weight $+\infty$. What is of greater interest is the value of  $A_{G\setminus S}$, the alternating weight of trivial heap of cycles avoiding some set of vertices $S$, which is
\[A_{G\setminus S}= \det\l(\Id-\M_{G \setminus S}\r),\] 
as well as its inverse corresponding to the total weight of heaps of cycles on $G \setminus S$:
\ben\label{eq:sgr}
\sum_{H\in {\sf Heap~of~Cycles~on~}G \setminus S}\W(H)= \det\l(\Id-\M_{G \setminus S}\r)^{-1}.
\een

\section{Main theorems in the almost triangular cases}
\label{sec:Mut}

\subsection{Almost upper triangular cases}

\textbf{In the \sut{} irreducible case}, there exists a unique invariant measure:
\begin{theo} \label{theo:InvDistU} If $\U$ is an irreducible \sut{} transition matrix, then $\U$ admits a unique positive invariant measure $(\pi_a,a\geq 0)\in \Mpq{\mathbb{N}}$ , which is defined (up to a constant factor  $\pi_0>0$) by
  \[\pi_a := \pi_0\, \frac{\det( \Id-\U_{[0,a-1]})}{\prod_{j=1}^{a}\U_{j,j-1}},~~a\geq 1.\]
The transition matrix $\U$ is positive recurrent if and only if
	\ben\label{eq:rzfqhy}\sum_{a=1}^\infty \frac{\det\left( \Id-\U_{[0,a-1]}\right)}{\prod_{j=1}^{a}\U_{j,j-1}}<\infty.	\een
  \end{theo}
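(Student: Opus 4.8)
The plan is to exploit the fact that, because $\U$ is \sut{}, the stationarity equations $\pi\U=\pi$ decouple into a \emph{triangular} system. First I would write the equation coming from column $b$, namely $\pi_b=\sum_{a\ge 0}\pi_a\U_{a,b}$, and observe that the almost upper-triangular property ($\U_{a,b}>0\imp a\le b+1$) makes this a \emph{finite} sum over $a\in[0,b+1]$. Isolating the top term gives
\[
\pi_{b+1}\,\U_{b+1,b}=\pi_b-\sum_{a=0}^{b}\pi_a\,\U_{a,b}.
\]
Irreducibility forces $\U_{b+1,b}>0$ for every $b$ (the only way to descend is by a step of $-1$, so reaching $0$ from above requires each of these entries to be positive), hence this recursion determines $\pi_{b+1}$ uniquely from $\pi_0,\dots,\pi_b$. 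Thus, once $\pi_0>0$ is fixed, the linear system $\pi\U=\pi$ has exactly one solution, which already yields existence and uniqueness in $\Mpq{\N}$ at the algebraic level.

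The main work is to check that the closed-form candidate $\pi_a=\pi_0\,D_a/P_a$, with $D_a:=\det(\Id-\U_{[0,a-1]})$ and $P_a:=\prod_{j=1}^{a}\U_{j,j-1}$ (and $D_0=P_0=1$), satisfies this recursion. Since $P_{b+1}=P_b\,\U_{b+1,b}$, substituting the candidate and clearing denominators reduces the claim to the determinant identity
\[
D_{b+1}=D_b-\sum_{a=0}^{b}\U_{a,b}\Big(\prod_{k=a+1}^{b}\U_{k,k-1}\Big)D_a. \qquad(\star)
\]
I would prove $(\star)$ by expanding $\det(\Id-\U_{[0,b]})$ along its last column and then repeatedly along successive last rows: for each $a<b$, the minor obtained by deleting row $a$ and column $b$ has, at every stage, a single nonzero entry $-\U_{k,k-1}$ in its last row, so one peels off the factors $\prod_{k=a+1}^{b}(-\U_{k,k-1})$ and is left with $\det(\Id-\U_{[0,a-1]})=D_a$; tracking the cofactor signs collapses everything to $(\star)$. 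Equivalently, $(\star)$ follows from the expansion in \Cref{lem:etyyu}. This bookkeeping of nested minors and signs is the step I expect to be the main obstacle.

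It remains to establish positivity and the positive-recurrence criterion. Positivity of the $\pi_a$ comes down to $D_a>0$: by \Cref{pro:pos}, $D_a$ is the total $\U$-weight of spanning forests of $[0,a-1]$ rooted in $[a,+\infty)$, so $D_a\ge 0$; and since irreducibility makes the sub-chain killed on leaving the finite set $[0,a-1]$ transient (from each state there is a positive-probability path to state $a$), the restricted substochastic matrix $\U_{[0,a-1]}$ has spectral radius strictly less than $1$, so $\Id-\U_{[0,a-1]}$ is invertible and $D_a\neq 0$, whence $D_a>0$. Together with $P_a>0$ and $\pi_0>0$ this gives $\pi_a>0$, so the unique invariant measure is genuinely positive. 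Finally, for an irreducible chain, positive recurrence is equivalent to the existence of an invariant \emph{probability}, i.e.\ to the summability of the (unique up to scale) invariant measure $\pi$; normalising $\pi$ so that $\sum_a\pi_a=1$ is possible exactly under condition \eref{eq:rzfqhy}, which is therefore the stated criterion.
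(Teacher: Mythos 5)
Your proposal is correct and takes essentially the same route as the paper: your key identity $(\star)$ is exactly the paper's Lemma~\ref{lem:fdher} (itself an instance of the expansion in Lemma~\ref{lem:etyyu}), which the paper uses in the same way to verify that the determinantal formula satisfies $\pi\U=\pi$. Your remaining ingredients — the triangular recursion forcing uniqueness, positivity of the determinants via Proposition~\ref{pro:pos}, and the equivalence of positive recurrence with summability of the invariant measure — are precisely what appears in Remark~\ref{rem:shh} and at the end of the paper's proof.
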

\begin{rem}\label{rem:shh}
  \bir
  \itr The measure $\pi$ is a positive measure on $\mathbb{N}$, this is a consequence of \Cref{pro:pos} and can be seen using \eref{eq:rgzf} too, even if the matrix $\U_{[0,a-1]}$ is not a transition matrix deprived of a line and a column (but it can be obtained as such). 
  \itr We have $\pi_a =\sum_{b\leq a+1}\pi_b \U_{b,a}$ so that there is a second algorithmic method to compute directly $(\pi_a,a\geq 0)$: fix freely a value $\pi_0>0$, and then for $a\geq 1$ use the following recursion:
  \ben \label{eq:piU}
  \pi_a :=  \pi_{a-1}\, \frac{1-\U_{a-1,a-1}}{\U_{a,a-1}}-\sum_{x\leq a-2} \pi_x\, \frac{\U_{x,a-1}}{\U_{a,a-1}}.
  \een
 The equivalence of this formula with \Cref{theo:InvDistU} is not obvious, and it is even not obvious that \eref{eq:piU} produces a positive sequence $(\pi_a,a\geq 0)$. In fact, the system $\pi_a =\sum_{b\leq a+1}\pi_b \U_{b,a}$ can be written under the form \eref{eq:piU} which is a triangular system, and then given $\pi_0$, it possesses a unique solution.
 \itr The theorem applies in the tridiagonal case even if the formula seems different from Karlin \& McGregor formula \eref{eq:sgfqd} (see Section \ref{sec:sqshtqdf} for the complete explanation).
 \eir~\\
\end{rem}  

 \begin{lem}\label{lem:fdher} For any \underbar{finite} \sut{} matrix $\U$,  any $y\geq 0$  smaller than the matrix size
\ben\label{eq:fdher}
\det( \Id-\U_{[0,y]})=\det( \Id-\U_{[0,y-1]})\, (1-\U_{y,y}) - \sum_{x\leq y-1} \det( \Id-\U_{[0,x-1]}) \, \U_{x,y} \prod_{j=x+1}^{y} \U_{j,j-1}.
\een
with the convention $\det( \Id-\U_{[0,-1]})=1$.
\end{lem}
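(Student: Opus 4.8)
The plan is to recognize the right-hand side of \eqref{eq:fdher} as the cofactor expansion of $\det(\Id-\U_{[0,y]})$ along its \emph{last} column, i.e. the mirror image of the first-row expansion \eqref{det0} already established in \Cref{lem:etyyu}. Writing $A:=\Id-\U_{[0,y]}$, which is again \sut{} since the identity only affects diagonal entries, I would expand $\det A$ along column $y$. The diagonal entry $A_{y,y}=1-\U_{y,y}$ contributes $(1-\U_{y,y})\det(\Id-\U_{[0,y-1]})$, matching the first term; each entry $A_{x,y}=-\U_{x,y}$ with $x<y$ contributes $\pm\U_{x,y}$ times the minor obtained by deleting row $x$ and column $y$.

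The key structural step is to evaluate that minor, whose rows are $\{0,\dots,y\}\setminus\{x\}$ and whose columns are $\{0,\dots,y-1\}$. Using the \sut{} property exactly as in the proof of \Cref{lem:etyyu}: among the rows $x+1,\dots,y$ the only available entries lie on the subdiagonal, so row $y$ is forced onto column $y-1$, then row $y-1$ onto column $y-2$, and so on down to row $x+1$ onto column $x$. These forced choices peel off the factor $\prod_{a=x+1}^{y}(-\U_{a,a-1})$ and leave the top-left block $A_{[0,x-1]}=\Id-\U_{[0,x-1]}$ untouched, so the minor factors as $\det(\Id-\U_{[0,x-1]})\prod_{a=x+1}^{y}(-\U_{a,a-1})$.

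I expect the only genuine nuisance to be the signs: combining the cofactor sign $(-1)^{x+y}$, the minus from $A_{x,y}=-\U_{x,y}$, the $(-1)^{y-x}$ hidden in the subdiagonal factors, and the $+1$ coming from the block-diagonal shape of the forced permutation, all these powers of $-1$ must collapse to produce exactly the single minus sign and the product $\prod_{j=x+1}^{y}\U_{j,j-1}$ appearing in \eqref{eq:fdher}. To sidestep this bookkeeping entirely, the cleanest route is to apply \eqref{det0} directly to the anti-transpose $B$ of $A$, defined by $B_{i,j}=A_{y-j,\,y-i}$: anti-transposition preserves both the determinant and the \sut{} structure, and a single relabelling $x=y-j$ turns the first-row expansion \eqref{det0} of $B$ into \eqref{eq:fdher}, since there $\det(B_{[j+1,y]})=\det(\Id-\U_{[0,x-1]})$ and $\prod_{i=1}^{j}(-B_{i,i-1})=\prod_{a=x+1}^{y}\U_{a,a-1}$, while $B_{0,j}=A_{x,y}$ supplies the $1-\U_{y,y}$ at $x=y$ and the $-\U_{x,y}$ for $x<y$. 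The convention $\det(\Id-\U_{[0,-1]})=1$ is exactly the empty top-left block arising at $x=0$.
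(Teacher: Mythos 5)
Your proposal is correct and, in its final ``cleanest route'' form, coincides exactly with the paper's own proof: the paper establishes \eqref{eq:fdher} by applying \eqref{det0} of \Cref{lem:etyyu} to the matrix obtained from $\Id-\U$ by the symmetry with respect to the second diagonal, which is precisely your anti-transpose $B_{i,j}=A_{y-j,\,y-i}$, with the same identifications $\det(B_{[j+1,y]})=\det(\Id-\U_{[0,x-1]})$, $\prod_{i=1}^{j}(-B_{i,i-1})=\prod_{a=x+1}^{y}\U_{a,a-1}$, and $B_{0,j}=A_{x,y}$. The preliminary last-column cofactor discussion is a harmless (and sign-correct in spirit) detour that you rightly discard in favour of the anti-transposition argument.
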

\begin{proof}
This is an application of Lemma \ref{lem:etyyu} to the \sut{} matrix $A=\Id-U$, more exactly to the matrix obtained from $\Id-U$ by the symmetry with respect to the second diagonal. 
\end{proof}
  \begin{proof}[Proof of \Cref{theo:InvDistU}] Let us establish that $\sum_x \pi_x \U_{x,y} = \pi_y$; 
since $\U$ is \sut, this is equivalent to
\ben\label{eq:yjid}
\sum_{x\leq y+1} \det(\Id -\U_{[0,x-1]}) \U_{x,y}/{\dis\prod_{j=1}^{x}\U_{j,j-1}} &=& \det(\Id -\U_{0,y-1})/{\dis\prod_{j=1}^{y}\U_{j,j-1}}.
\een
Multiply both sides  by $\prod_{j=1}^{y+2} \U_{j,j-1}$ allows seeing that \eref{eq:yjid} is equivalent to
\ben\label{eq:yjid2}
\sum_{x\leq y+1} \det(\Id -\U_{[0,x-1]}) \U_{x,y}{\dis\prod_{j=x+1}^{y+2}\U_{j,j-1}} &=& \det(\Id -\U_{0,y-1}) {\dis\prod_{j=y+1}^{y+2}\U_{j,j-1}},
\een
which holds, by \eref{eq:fdher}.

The invariant measure defines a probability measure, and is therefore positive recurrent if and only if $\sum_{i=0}^\infty \pi_i<\infty$ which gives \eref{eq:rzfqhy}.
\end{proof}

A second proof of the Theorem will be given in Section \ref{sec:SP}.
\begin{theo}\label{theo:dfdqdq} For a Markov chain $Y=(Y_i,i\geq 0)$ with \sut{} irreducible transition matrix $\U$, denote by
  \[\tau_S(Y)= \inf \{j >0: Y_j\in S\}\]
  the hitting time of the set $S$ by $Y$. 
  Set, for any $0<x<b$,
  \[u_b(x)=\P( \tau_{\{0\}}(Y) < \tau_{[b,+\infty)}(Y) ~|~Y_0=x).\] 
  We have
  \ben\label{eq:ubx}
  u_b(x)=  \frac{\det(\Id-\U_{[x+1,b-1]})}{\det(\Id-\U_{[1,b-1]})}\prod_{j=1}^x \U_{j,j-1}\een
so that $\U$ is recurrent if and only if
\ben\label{eq:recU}\lim_{b\to +\infty} u_b(1)=\lim_{b=+\infty} \U_{1,0} \frac{\det(\Id -\U_{[2,b-1]})}{\det(\Id-\U_{[1,b-1]})} = 1. 
\een
\end{theo}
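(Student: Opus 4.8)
The plan is to reduce the computation of $u_b(x)$ to a finite linear system, solve it by Cramer's rule, and evaluate the resulting minor using the \sut{} structure; the recurrence criterion then follows from a monotone passage to the limit.

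First I would perform a one-step analysis on the finite window $[0,b]$. Conditioning on $Y_1$, and using that $\U$ is \sut{} (so the only one-step route to $0$ is from $x=1$, while $u_b$ takes the boundary values $1$ at $0$ and $0$ on $[b,+\infty)$), one obtains, for $1\le x\le b-1$,
\[ u_b(x) = \sum_{y=1}^{b-1}\U_{x,y}\,u_b(y) + \U_{x,0}, \qquad 1\le x\le b-1. \]
Writing $v=(u_b(1),\dots,u_b(b-1))^{\top}$ and $c=\U_{1,0}\,e_1$ (the only nonzero boundary contribution sits at $x=1$, since $\U_{x,0}=0$ for $x\ge 2$), this reads $(\Id-\U_{[1,b-1]})\,v=c$. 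The matrix $\U_{[1,b-1]}$ is a strict substochastic truncation of the irreducible stochastic matrix $\U$, hence has spectral radius $<1$; thus $\Id-\U_{[1,b-1]}$ is invertible, the system has a unique solution, and $u_b$ (which solves it, the chain exiting the finite window a.s.) is that solution.

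Next I would apply Cramer's rule, which gives
\[ u_b(x) = \U_{1,0}\,(-1)^{1+x}\,\frac{\det N}{\det(\Id-\U_{[1,b-1]})}, \]
where $N$ is $\Id-\U_{[1,b-1]}$ with row $1$ and column $x$ deleted (the single nonzero entry of $c$, namely $\U_{1,0}$ in row $1$, leaves only one surviving cofactor). To evaluate $\det N$ I would expand iteratively along its leftmost columns: since $\Id-\U_{[1,b-1]}$ is \sut{}, each column $j$ with $j\le x-1$ has, among the surviving rows, a unique nonzero entry, the subdiagonal term $-\U_{j+1,j}$, sitting at the top. Peeling these off one at a time (each such expansion carries cofactor sign $+1$) extracts the factor $\prod_{j=1}^{x-1}(-\U_{j+1,j})=(-1)^{x-1}\prod_{j=2}^{x}\U_{j,j-1}$ and leaves exactly the block $\Id-\U_{[x+1,b-1]}$. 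Collecting signs, $(-1)^{1+x}(-1)^{x-1}=1$, and absorbing the prefactor $\U_{1,0}$ turns $\prod_{j=2}^{x}$ into $\prod_{j=1}^{x}$, which yields precisely \eref{eq:ubx}; specializing to $x=1$ gives $u_b(1)=\U_{1,0}\,\det(\Id-\U_{[2,b-1]})/\det(\Id-\U_{[1,b-1]})$.

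For the recurrence criterion I would note that the events $\{\tau_{\{0\}}(Y)<\tau_{[b,+\infty)}(Y)\}$ increase with $b$, and that any trajectory hitting $0$ at a finite time stays bounded up to that time, so that these events increase to $\{\tau_{\{0\}}(Y)<+\infty\}$; hence by monotone convergence
\[ \lim_{b\to+\infty} u_b(1) = \P\big(\tau_{\{0\}}(Y)<+\infty \mid Y_0=1\big). \]
Since $\U$ is irreducible, it is recurrent if and only if the return probability to $0$ is $1$; decomposing the first step from $0$ and using the downward skip-free structure (reaching $0$ from $y$ forces passage through $y-1,\dots,1$, so the return probability is a product $\prod_k L_k$ of one-level descent probabilities $L_k=\P(\tau_{\{k-1\}}<+\infty\mid Y_0=k)$, with $L_1=\P(\tau_{\{0\}}<+\infty\mid Y_0=1)$), a nonnegativity argument bootstrapped by irreducibility shows this is equivalent to $L_1=1$, i.e. to $\lim_b u_b(1)=1$. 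This is \eref{eq:recU}.

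The step I expect to be the main obstacle is the sign and index bookkeeping in the iterated cofactor expansion of $\det N$, ensuring the forced subdiagonal entries combine to the clean product $\prod_{j=1}^{x}\U_{j,j-1}$ and the residual minor is \emph{exactly} $\Id-\U_{[x+1,b-1]}$. Should the signs prove delicate, an alternative is to compute $\det N$ directly from \Cref{lem:etyyu} after reflecting across the anti-diagonal, or to read $u_b(x)$ off the forest expansion of \Cref{pro:pos}; the other mildly technical point is the equivalence $L_1=1\Leftrightarrow$ recurrence, which I would justify by standard irreducible-chain theory together with the skip-free structure.
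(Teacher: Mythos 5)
Your derivation of the exit formula \eref{eq:ubx} is correct, and it takes a genuinely different route from the paper's. The paper argues combinatorially: it decomposes each path from $x$ to its first visit of $0$ inside $[0,b-1]$ at the successive last passage times of $x,x-1,\dots,1$, evaluates each package of cycles by the heap-of-cycles identity recalled in Section \ref{sec:HCT} (Viennot's Proposition 5.3), and telescopes the resulting product of determinant ratios. You instead characterize $(u_b(y))_{1\le y\le b-1}$ as the unique solution of $(\Id-\U_{[1,b-1]})v=c$, with $c$ having the single nonzero entry $\U_{1,0}$ in coordinate $1$, and extract $u_b(x)$ by Cramer's rule and an iterated cofactor expansion. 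I checked your bookkeeping: among the surviving rows, each column $j\le x-1$ of the minor has the single nonzero entry $-\U_{j+1,j}$, sitting in the current top-left corner, so each peeling carries sign $+1$, the residual block is exactly $\Id-\U_{[x+1,b-1]}$, and $(-1)^{1+x}(-1)^{x-1}=1$. Your route is more elementary (no heap-of-pieces input is needed), and it yields the generating-function refinement of \Cref{pro:ht} just as well, by replacing $\U$ with $z\U$ in the system; what it does not provide is the combinatorial picture the paper reuses elsewhere.

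The recurrence criterion is where your proposal has a genuine gap, and it cannot be closed as stated: the implication ``$\lim_b u_b(1)=1\Rightarrow\U$ recurrent'' is false for general irreducible \sut{} matrices. Your bootstrap propagates $h(y):=\P(\tau_{\{0\}}(Y)<\infty\mid Y_0=y)=1$ from $y=1$ only to states reachable from $1$ by paths avoiding $0$, whereas recurrence is equivalent to $\P(\tau_{\{0\}}(Y)<\infty\mid Y_0=0)=\U_{0,0}+\sum_{y\ge1}\U_{0,y}h(y)=1$, i.e. to $h\equiv 1$ on the support of the row $\U_{0,\cdot}$; nothing forces that support to be reachable from $1$ without touching $0$. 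Concretely, take $\U_{1,0}=1$, $\U_{0,2}=1$, and $\U_{k,k-1}=1/3$, $\U_{k,k+1}=2/3$ for $k\ge2$. This matrix is \sut{} and irreducible ($0\to2\to1\to0$, and every $k\ge2$ communicates with $2$), and $u_b(1)=1$ for every $b$: from $1$ the chain jumps to $0$ deterministically (equivalently, the first row of $\Id-\U_{[1,b-1]}$ is the unit vector, so the ratio in \eref{eq:recU} equals $\U_{1,0}=1$). Yet the chain is transient: from $0$ it jumps to $2$, and the walk above level $1$, having upward drift, hits $1$ with probability $1/2$ only, so the return probability to $0$ is $1/2$.

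You should know that this gap is inherited from the paper itself: its entire justification of \eref{eq:recU} is the sentence ``Recurrence is equivalent to $u_b(1)\to 1$ when $b\to+\infty$, since $\U$ is irreducible'', which is exactly the step that fails in the example above, so the stated equivalence needs a correction. What your argument does prove, via the (correct) monotone-convergence identification $\lim_b u_b(x)=\P(\tau_{\{0\}}(Y)<\infty\mid Y_0=x)$ together with a first-step decomposition at $0$, is the repaired criterion: $\U$ is recurrent if and only if $\lim_b u_b(x)=1$ for every $x\ge 1$ (equivalently, for every $x$ in the support of $\U_{0,\cdot}$), with $\lim_b u_b(x)$ given explicitly by \eref{eq:ubx}. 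The forward direction, recurrence $\Rightarrow\lim_b u_b(1)=1$, is fine as you argue; and your fallback suggestions (reflecting \Cref{lem:etyyu}, or the forest expansion of \Cref{pro:pos}) concern only the determinant evaluation, not this point.
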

In Section \ref{sec:cr} we will see that in the tridiagonal case, this criterion reduces to Karlin \& McGregor criterion \eref{eq:dqsdt}.
\begin{proof}
Recurrence is equivalent to $u_b(1)\to 1$ when $b\to+\infty$, since $\U$ is irreducible. Consider the set of paths $P_{x\to 0, [0,b-1]}$ starting at $x$ ending at the first time it reaches 0 and staying in $[0,b-1]$. Any path $w=(w_0=x,\cdots,w_f=0)$ in this set reaches all the positions $x,x-1,\cdots,1,0$, and $0$ is hit for the first time at the end of the path; denote by
    \[\ell_y :=\ell_{y}(w)= \max\{j: w_j=y\}\]
    the last passage time of $w$ at $y$. The path $w$ can be decomposed as follows.\\
    -- In the time interval $[0,\ell_{x}]$, $w$ is a path starting and ending at $x$, staying in $[1,b-1]$,\\
    -- then there is the last step $x\to x-1$.\\
    -- In the time interval $[ 1+\ell_{x},\ell_{x-1}]$, $w$ is a path starting and ending at $x-1$, staying in $[1,x-1]$, \\
    -- then there is the step $x-1\to x-2$;\\
    more generally, in the time interval $[1+\ell_j,\ell_{j-1}]$, $w$ is a path starting and ending at $j-1$, staying in $[1,j-1]$.\\
    -- The last step is $1\to 0$.\\
	
    If one denotes by ${\sf Weight}(y,[a,b])$ the weight of cycles on $[a,b]$ with maximal pieces incident to $y$, with $y\in[a,b]$. Then, its value is by Viennot \cite[Prop. 5.3]{VX}, Krattenthaler \cite[Theorem 4.1]{CK} (and Section \ref{sec:HCT})
    \[{\sf Weight}(y,[a,b])= \frac{\det(\Id-\U_{[a,y-1]}).\det(\Id-\U_{[y+1,b]})}{\det(\Id-\U_{[a,b]})},\]
    which gives simply ${\sf Weight}(y,[a,y])= \frac{\det(\Id-\U_{[a,y-1]})}{\det(\Id-\U_{[a,y]})}$.
To get \eref{eq:ubx}, just simplify the following  telescopic product
 \be
u_b(x)&=& \Big(\prod_{j=1}^x \U_{j,j-1}\Big) \frac{\det(\Id-\U_{[1,x-1]}).\det(\Id-\U_{[x+1,b-1]})}{\det(\Id-\U_{[1,b-1]})}\prod_{j=1}^{x-1} \frac{\det(\Id-\U_{[1,j-1]})}{\det(\Id-\U_{[1,j]})}.
  \ee  
\end{proof}

\begin{pro}\label{pro:ht} For $x\in (a,b)$, set $\ra{u}_{a,\geq b}(x;z)=\E\l[ z^{\tau_{\{a\}}(Y)}\1_{\tau_{\{a\}}(Y)< \tau_{[b,+\infty)}(Y)}~|~Y_0=x\r]$ the (defective) generating function of the hitting time of $a$ under the event that $a$ is reached before $b$. We have
\begin{align}\label{hit1}
	\ra{u}_{a,\geq b}(x;z) = \frac{\det\left( \Id - z\U_{[x+1,b-1]} \right)}{\det\left( \Id - z\U_{[a+1,b-1]} \right) }\prod_{j=1}^{x} (z\U_{j,j-1}).
	\end{align}
	In particular when $U$ is recurrent $\ra{u}_{0,b}(x;z)\rightarrow \ra{u}(x;z) :=\E\l[ z^{\tau_{\{0\}}(Y)}~|~Y_0=x\r]$ as $b\to +\infty$; otherwise it converges\footnote{It converges in the sense that, for all $k$, the coefficient of $z^k$ in  $\ra{u}_{0,b}(x;z)$ converge to that of $\ra{u}(x;z)$ as $b\to+\infty$. Seen as a power series in $z$, $\ra{u}_{0,b}(x;z)$ converges uniformly on each compact included in $[0,1)$ to  $\ra{u}(x;z)$}  to
    \ben\label{eq:rauxz}\ra{u}(x,z)=\E\l[ z^{\tau_{\{0\}}(Y)}\1_{\tau_{\{0\}}(Y)<+\infty}~|~Y_0=x\r].\een
\end{pro}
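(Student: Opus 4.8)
The plan is to establish the determinant formula \eqref{hit1} for the defective generating function $\ra{u}_{a,\geq b}(x;z)$ by mimicking the combinatorial/heap-of-cycles decomposition already carried out in the proof of \Cref{theo:dfdqdq}, but now keeping track of the length of each path through the formal variable $z$. The key observation is that the earlier proof computed $u_b(x)=\ra{u}_{0,\geq b}(x;1)$ by decomposing each path in $P_{x\to a,[a,b-1]}$ (here $a=0$) at its last-passage times $\ell_j$, and weighting it by a product of transition probabilities. To incorporate $z^{\tau_{\{a\}}(Y)}$, I attach a factor $z$ to each step of the path; since the total number of steps is $\tau_{\{a\}}(Y)$, the generating function is obtained simply by replacing the weight $\U_{u,v}$ of every edge by $z\U_{u,v}$ throughout that computation.

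First I would record the path decomposition exactly as in \Cref{theo:dfdqdq}: a path from $x$ down to $a$ (staying in $[a,b-1]$, hitting $a$ only at the final step) splits, using the last-passage times $\ell_x>\ell_{x-1}>\cdots>\ell_a$, into excursions from level $j-1$ back to $j-1$ staying in $[a,j-1]$, interleaved with the $x-a$ forced down-steps $j\to j-1$. Next I would invoke the heap-of-cycles weight identity $\W(y,[a',b'])=\det(\Id-\U_{[a',y-1]})\det(\Id-\U_{[y+1,b']})/\det(\Id-\U_{[a',b']})$ from \Cref{pro:dqd} and Section~\ref{sec:HCT}, but applied to the matrix $z\U$ rather than $\U$. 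This is legitimate because the heap-of-cycles/matrix identity in \eqref{eq:sgr} holds for arbitrary (sub-stochastic or complex) weights, so substituting $z\U$ for $\M$ gives $\det(\Id-z\U_{G\setminus S})^{-1}$ as the total weight of heaps of cycles on $G\setminus S$ weighted by $z\U$. The telescoping product then goes through verbatim with each determinant $\det(\Id-\U_{[\cdot,\cdot]})$ replaced by $\det(\Id-z\U_{[\cdot,\cdot]})$ and with the explicit factor $\prod_{j=1}^{x}(z\U_{j,j-1})$ collecting the $z$-weighted forced down-steps, yielding exactly \eqref{hit1} after setting $a=0$ in the general formula and noting the statement is written for general $a$.

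For the convergence claims as $b\to+\infty$, I would argue as follows. For fixed $k$, the coefficient of $z^k$ in $\ra{u}_{0,\geq b}(x;z)$ counts the $z$-weighted paths of length exactly $k$ from $x$ to $0$ that stay in $[0,b-1]$; once $b>x+k$ no path of length $\le k$ can reach level $b$, so this coefficient is eventually constant in $b$ and equals the coefficient of $z^k$ in the full generating function $\E[z^{\tau_{\{0\}}(Y)}\1_{\tau_{\{0\}}(Y)<\infty}\mid Y_0=x]$. This gives the coefficientwise convergence asserted in the footnote, and since all coefficients are nonnegative and bounded by the corresponding probabilities (so the power series has radius of convergence $\ge 1$ with summable coefficients on $[0,1)$), a standard dominated-convergence/Vitali argument upgrades this to uniform convergence on compact subsets of $[0,1)$. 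In the recurrent case $\tau_{\{0\}}(Y)<\infty$ almost surely so the indicator is trivial and the limit is the genuine generating function $\ra{u}(x;z)$; in the transient case the limit is the defective object \eqref{eq:rauxz}.

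The main obstacle I anticipate is justifying that the heap-of-cycles weight identity may be applied with the substitution $\U\mapsto z\U$ while keeping the length-tracking bookkeeping correct, and in particular verifying that the telescoping of the excursion weights collapses to the clean ratio in \eqref{hit1}. Concretely, one must check that the product $\prod_{j}\W(j-1,[0,j-1])$ of excursion weights (now in $z\U$) telescopes against the factors $\det(\Id-z\U_{[1,j-1]})/\det(\Id-z\U_{[1,j]})$ exactly as in the $z=1$ case, with no spurious powers of $z$ left over; this requires being careful that each cycle in a heap contributes its true length to the exponent of $z$, which is automatic since $\W(w)$ with weights $z\U$ assigns $z^{|w|}$ to a path $w$, so the substitution is consistent at the level of individual pieces and hence at the level of the whole generating function.
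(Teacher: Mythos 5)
Your derivation of the determinant formula \eqref{hit1} is correct and is exactly the paper's argument: the paper's proof consists precisely of rerunning the proof of \Cref{theo:dfdqdq} (last-passage decomposition interleaved with the forced down-steps, plus the heap-of-cycles identity of \Cref{pro:dqd}) with every edge weight $\U_{i,j}$ replaced by $z\U_{i,j}$, which is legitimate because those identities are weight-generic.

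However, your justification of the convergence as $b\to+\infty$ contains a genuine error. You claim that, for fixed $k$, ``once $b>x+k$ no path of length $\le k$ can reach level $b$,'' so that the coefficient of $z^k$ in $\ra{u}_{0,\geq b}(x;z)$ is eventually constant in $b$. This is false for \sut{} chains: only the \emph{downward} jumps are bounded (by $-1$); upward jumps are unbounded, so from any state a single step can land above any prescribed level $b$. Consequently the coefficient $\P\l(\tau_{\{0\}}(Y)=k,\,\tau_{\{0\}}(Y)<\tau_{[b,+\infty)}(Y)~|~Y_0=x\r)$ is in general \emph{not} eventually constant in $b$: for every $b$ there exist paths of length $k$ that hit $0$ at time $k$ but exceed $b$ beforehand, and these are excluded for that $b$ while being included for larger $b$. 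The conclusion (coefficientwise convergence) is nevertheless true, and the fix is immediate: the events $\{\tau_{\{0\}}(Y)=k,\ \tau_{\{0\}}(Y)<\tau_{[b,+\infty)}(Y)\}$ are increasing in $b$ and their union is $\{\tau_{\{0\}}(Y)=k\}$, because any path of finite length $k$ is bounded and hence stays below $[b,+\infty)$ for all large $b$; monotone convergence then gives the increase of the coefficients to $\P(\tau_{\{0\}}(Y)=k~|~Y_0=x)$. With that replacement, your dominated-convergence upgrade to uniform convergence on compact subsets of $[0,1)$, and the observation that in the recurrent case the indicator is trivial, go through (the paper itself disposes of this part in one line, noting that under recurrence $\P(\tau_{\{0\}}(Y)\leq \tau_{\{b\}}(Y)~|~Y_0=x)\to 1$ as $b\to+\infty$).
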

\begin{proof} The proof is the same as that of Theorem \ref{theo:dfdqdq}, in which the weight $\U_{i,j}$ of a step is replaced by $\U_{i,j}z$.
	Second statement: in case of recurrence, $\P(\tau_{\{0\}}(Y)\leq \tau_{\{b\}}(Y)~|Y_0=x)\to 1$ when $b\to+\infty$.    
\end{proof}
We define now the transition matrix $\U^{(n)}$, that we will call the ``projected'' transition matrix $\U$ on $[0,n]$~:
\ben\label{eq:gsdf}\bpar{ccl}
\U^{(n)}_{i,j} &=&\U_{i,j},~~\textrm{ for }~~ i\in [0,n], j\in [0,n-1]\\
\U^{(n)}_{i,n} &=&\sum_{j\geq n} \U_{i,j}.\epar
\een
It will be often used in the sequel (as well as $\L^{(n)}$ defined in \eref{eq:gsdfL}).
\begin{rem}	Let $\ar{u}_{a,\geq b}(x;z)=\E\l[ z^{\tau_{[b,+\infty)}(Y)}\1_{\tau_{\{a\}}(Y)> \tau_{[b,+\infty)}(Y)}~|~Y_0=x\r]$ be the (defective) generating function of the hitting time of $[b,+\infty)$ under the event that $[b,+\infty)$ is reached before $a$.
	We have
	\begin{align}\label{hit2}
		\ar{u}_{a,\geq b}(x;z)= \l[ (\Id-z\U^{(b)}_{[a+1,b-1]})^{-1} z\U^{(b)}_{[a+1,b-1]\times [a+1,b]} \r]_{x,b}.
	\end{align}
where $\U^{(b)}_{[a+1,b-1]\times [a+1,b]}$ is the matrix $\U^{(b)}$ in which are kept only the entries indexed by $[a+1,b-1]\times [a+1,b]$.

To prove this formula, observe that
\ben\label{eq:srhjyr}
\hspace{-10pt}\ar{u}_{a,\geq b}(x;z)=\E\l[ z^{\tau_{[b,+\infty)}(Y)}\1_{\tau_{\{a\}}(Y)> \tau_{[b,+\infty)}(Y)}~|~Y_0=x\r]
  = \E\l[ z^{\tau_{\{b\}} (\widetilde{Y})}\1_{\tau_{\{a\}}(\widetilde{Y})> \tau_{\{b\}}(\widetilde{Y})}~|~\widetilde{Y}_0=x\r]\een
where $\widetilde{Y}$ is a $\U^{(b)}$ Markov chain, since $\widetilde{Y}$ and $Y$ can be coupled so that a jump in $[b,+\infty)$ of $Y$ (from $y\in [a,b-1]$) corresponds to a jump to $b$ for $\widetilde{Y}$ (from $y\in [a,b-1]$). 

 Since $\U^{(b)}$ is a transition matrix on a finite state space, the generating function we are looking for is $\sum_{k\geq 0} \l[ (z\U^{(b)}_{[a+1,b-1]})^{k} z\U^{(b)}_{[a+1,b-1]\times [a+1,b]}\r]_{x,b}$ where $k$ counts the number of steps of the chain $\widetilde{Y}$ avoiding $a$ and $b$, before the last step that hits $b$.
\end{rem}

\begin{rem}\label{rem:abs0} Some authors consider the case where $\sum_{k\geq 0}\U_{0,k}<1$, so that, a part of the mass disappears at each passage at 0: if one adds an additional absorbing state $\dagger$ to the state space, and set $\U_{0,\dagger}=1-\sum_{k\geq 0}\U_{0,k}$ and $\U_{\dagger,\dagger}=1$, then the absorbed mass at $\dagger$ for a $\U$-Markov chain  starting from $x$ is 
  \[A_\dagger(x)=\P(\tau_{\{\dagger\}}(X)<+\infty~|~X_0=x)\]
  and the corresponding (defective) hitting time generating function is
\[a_\dagger(x;z)=\E(z^{\tau_{\{\dagger\}}(X)}\1_{\tau_{\{\dagger\}}(X)<+\infty}~|~X_0=x).\]
Recall \eref{eq:rauxz}. We have $A_\dagger(x)=a_\dagger(x;1)$ and 
\[
	a_\dagger(x;z)= \ra{u}(x;z)z \U_{0,\dagger} \sum_{k=0}^\infty \Big(z\U_{0,0}+\sum_{y = 1}^\infty zU_{0,y}\ra{u}(y;z) \Big)^k  = \frac{z\U_{0,\dagger}\ra{u}(x;z)}{1-\left(z\U_{0,0}+\sum_{y = 1}^\infty zU_{0,y}\ra{u}(y;z) \right) }
\]
where $\ra{u}$ is defined in \Cref{pro:ht}. To show this formula, a simple decomposition is sufficient. Each path going to $\dagger$ can be decomposed as:\\
-- a trajectory that goes to 0 (whose weight is taken into account by $\ra{u}(x;z)$), \\
-- a sequence of $k$  cycles from 0 to 0 (each of them contributes $(z\U_{0,0}+\sum_{y = 1}^\infty zU_{0,y}\ar{u}(y,z))$,\\
-- and then a step leading to $\dagger$ from 0.
\end{rem}

\begin{defi}
Let $\pi,\pi^{(1)},\pi^{(2)},\cdots,$  be a sequence of measures on $\mathbb{N}$. The sequence  $(\pi^{(n)})$ is said to converge weakly to $\pi$ if for all $k\geq 0$, $\lim_n \pi_k^{(n)}=\pi_k$.
\end{defi}
When these measures are probability measures on $\mathbb{N}$, this is the classical convergence in distribution.
\begin{theo}\label{theo:equiv} Let $\U$ be a \sut{} transition matrix, irreducible on $\mathbb{N}$, and $\U^{(n)}$ be the projected transition matrix defined in \eref{eq:gsdf}.
Denote by $\rho^{(n)}$ the unique invariant probability distribution of $\U^{(n)}$. 
\bir
\itr The transition matrix $\U$ admits $\pi \in\Mp{\mathbb{N}}$ as an invariant measure, if and only if there exists a sequence  $(c_n, n\geq 0)$ such that $c_n \rho^{(n)} \to \pi$  weakly.
\itr $\U$ is positive recurrent with invariant probability distribution $\rho$ iff $\rho^{(n)} \to \rho$  weakly.
\eir
\end{theo}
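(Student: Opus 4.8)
The plan is to reduce both statements to a single structural identity relating $\rho^{(n)}$ to the canonical invariant measure of $\U$. Write $\hat\pi$ for the invariant measure of $\U$ produced by \Cref{theo:InvDistU}, normalised by $\hat\pi_0=1$. The claim I would establish first is that for every $n$,
\[\rho^{(n)}_a=\rho^{(n)}_0\,\hat\pi_a,\qquad a\in[0,n].\]
To see this I would compare balance equations. The projection \eref{eq:gsdf} modifies only the last column (index $n$) of $\U$, so for each column $a\in[0,n-1]$ one has $\U^{(n)}_{b,a}=\U_{b,a}$ for all $b$; since $\U$ is \sut{} the balance equation $\sum_b\rho^{(n)}_b\U^{(n)}_{b,a}=\rho^{(n)}_a$ reads $\sum_{b\le a+1}\rho^{(n)}_b\U_{b,a}=\rho^{(n)}_a$, which is precisely the triangular recursion \eref{eq:piU} defining $\hat\pi$. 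Irreducibility forces $\U_{a+1,a}>0$ (a chain above $a+1$ could otherwise never come down), so this recursion determines $\rho^{(n)}_{a+1}$ from $\rho^{(n)}_0,\dots,\rho^{(n)}_a$ exactly as for $\hat\pi$; running it for $a=0,\dots,n-1$ yields the claim, the single remaining balance equation (at column $n$) being automatically satisfied.

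With the identity in hand, part (i) is immediate. If $\pi\in\Mp{\N}$ is invariant, then by the uniqueness in \Cref{theo:InvDistU} it equals $\pi_0\hat\pi$; choosing $c_n=\pi_0/\rho^{(n)}_0$ gives $c_n\rho^{(n)}_a=\pi_a$ for all $a\le n$, whence $c_n\rho^{(n)}\to\pi$ weakly. Conversely, if $c_n\rho^{(n)}\to\pi$ weakly for some scalars $c_n$ and some $\pi\in\Mp{\N}$, then the identity gives $c_n\rho^{(n)}_a=(c_n\rho^{(n)}_0)\,\hat\pi_a$ for $a\le n$; letting $n\to\infty$ at $a=0$ shows $c_n\rho^{(n)}_0\to\pi_0$, and then at general $a$ that $\pi_a=\pi_0\hat\pi_a$, so $\pi$ is a positive multiple of $\hat\pi$ and thus invariant.

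For part (ii) the extra ingredient is the normalisation $\sum_{a=0}^n\rho^{(n)}_a=1$, which combined with the identity yields $\rho^{(n)}_0=\big(\sum_{a=0}^n\hat\pi_a\big)^{-1}$. If $\U$ is positive recurrent then $\sum_a\hat\pi_a<\infty$ by \Cref{theo:InvDistU}, so $\rho^{(n)}_0\to(\sum_a\hat\pi_a)^{-1}$ and $\rho^{(n)}_a\to\hat\pi_a/\sum_b\hat\pi_b=\rho_a$ for each fixed $a$, i.e. $\rho^{(n)}\to\rho$ weakly. Conversely, if $\rho^{(n)}\to\rho$ weakly with $\rho$ a probability distribution, the identity gives $\rho_a=\rho_0\hat\pi_a$ with $\rho_0=\lim_n\rho^{(n)}_0$; summing over $a$, the relation $\rho_0\sum_a\hat\pi_a=1$ forces $\rho_0>0$ and $\sum_a\hat\pi_a=\rho_0^{-1}<\infty$, which is positive recurrence.

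The only genuine content is the proportionality identity of the first step; everything afterwards is bookkeeping with limits. The points deserving care are twofold. First, one must know $\U^{(n)}$ is irreducible (so that $\rho^{(n)}$ is well defined and $\U_{a+1,a}>0$): this follows from the \sut{} irreducibility of $\U$, since unit down-steps are always available and any excursion of $\U$ above level $n$ is collapsed onto the single state $n$ in $\U^{(n)}$. Second, in the converse halves one must exclude the degenerate limit (namely $c_n\rho^{(n)}_0\to 0$, resp. $\rho_0=0$); this is exactly where the hypotheses $\pi\in\Mp{\N}$ and ``$\rho$ is a probability distribution'' are used.
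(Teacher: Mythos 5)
Your proof is correct. For the structural identity and part (i) you follow essentially the paper's own route: the paper likewise observes that on columns $0,\dots,n-1$ the balance equations of $\rho^{(n)}$ and of an invariant measure of $\U$ form the same triangular system (its \eref{eq:tryrs}), deduces the proportionality \eref{eq:hgzf}, and concludes (i) as you do. The genuine difference is the implication ``positive recurrent $\Rightarrow \rho^{(n)}\to\rho$'' in part (ii). There the paper argues probabilistically: it couples a $\U$-chain $X$ with its trace on $[0,n]$ (the chain observed at the times $k$ with $X_k\le n$), checks that this trace is a $\U^{(n)}$-Markov chain, and applies the ergodic theorem to both chains. You obtain the same implication purely algebraically from your identity plus the normalisation, $\rho^{(n)}_a=\hat\pi_a\big/\sum_{k\le n}\hat\pi_k$, letting $n\to\infty$ under $\sum_k\hat\pi_k<+\infty$, which is exactly \eref{eq:rzfqhy}. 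Your route is more elementary (no coupling, no ergodic theorem) and yields the exact value of $\rho^{(n)}$; it is in fact slightly sharper than the paper's, whose ergodic argument asserts $\rho^{(n)}_k=\rho_k$ for $k\le n-1$ when the correct identity is $\rho^{(n)}_k=\rho_k\big/\sum_{j\le n}\rho_j$ (harmless for the weak limit, but your formula is the precise one). What the paper's coupling buys is the probabilistic interpretation of $\U^{(n)}$ as the trace of the $\U$-chain on $[0,n]$, which is of independent interest but not needed for the statement. Finally, your two points of care --- irreducibility of $\U^{(n)}$, and exclusion of the degenerate limits $c_n\rho^{(n)}_0\to 0$ and $\rho_0=0$ via the hypotheses $\pi\in\Mp{\N}$ and ``$\rho$ is a probability distribution'' --- are exactly the right ones; the paper leaves the first of these implicit.
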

\begin{proof}Since $\U_{i,j}=\U_{i,j}^{(n)}$ for $i<n$, the equilibrium equations
  \be
  \pi_b &=& \sum_{a\leq b+1} \pi_a \U_{a,b},\\
  \rho^{(n)}_b&=&\sum_{a\leq b+1} \rho^{(n)}_{a} \U^{(n)}_{a,b}=\sum_{a\leq b+1} \rho^{(n)}_{a} \U_{a,b}
  \ee
  are the same for $b\leq n-1$. These systems can be rewritten to express $\pi_{b+1}$ (respectively $\rho^{(n)}_{b+1}$) in terms of  $\pi_j$ with smaller indices $j$, as follows:
  \ben\label{eq:tryrs}
  \pi_{b+1} &=& (\pi_{b}- \sum_{a\leq b} \pi_a \U_{a,b}) /U_{b+1,b}, \\
  \rho^{(n)}_{b+1} &=& (\rho^{(n)}_{b}- \sum_{a\leq b} \rho^{(n)}_{a} \U_{a,b}) /U_{b+1,b}
  \een
  for $b\leq n-1$. Fixing a value for $\pi_0$, this allows deducing the proportionality 
  \ben\label{eq:hgzf}
  \l(\pi_i, 0\leq i \leq n-1\r) = C_n \big(\rho^{(n)}_i, 0\leq i \leq n-1\big)\een
  for a constant $C_n>0$. The uncontrolled weight $\rho^{(n)}_n$ is not a detail at all, since it is directly related to $C_n$. If $C_n$ goes to $+\infty$, for example, it means that the mass $\rho^{(n)}_i$ vanishes when $n\to +\infty$, but this does not prevent $C_n\rho_i^{(n)}$ to converge.\\
\underbar{Proof of $(i)$.} Assume that $\pi$ is invariant by $\U$, by  \eref{eq:hgzf}, $C_n \rho^{(n)}\to \pi$ weakly. Conversely, assume that $C_n\rho^{(n)}\to \pi$. Still by \eref{eq:hgzf} and \eref{eq:tryrs}, $\pi$ is invariant by $\U$.\\
\underbar{Proof of $(ii)$.} First, if $\rho^{(n)}\to \rho$ weakly, then, since $\rho$ is assumed to be a probability distribution, by $(i)$ $\rho$ is invariant by $\U$, and since $\rho$ is summable and $\U$ irreducible, then $\U$ is positive recurrent.\par
  Conversely, assume that $\rho$ is invariant by $\U$. 
  We define on the same probability space $X=(X_i, i\geq 0)$ and $X^{(n)}=(X_i^{(n)}, i\geq 0)$, such that $X$ is a $\U$-Markov chain, and $X^{(n)}$ is a $\U^{(n)}$-Markov chain as follows: take $X_0=X^{(n)}_0$ with probability 1 as initial distribution (for all $n\geq 0$).  For any $n\geq 1$, consider the random subset $S^{(n)}= \{k: X_k\leq n\}$ and sort its elements increasingly $\tau_{0}^{(n)},\tau_1^{(n)},\cdots$. It is easy to check that $\big(X_{\tau^{(n)}_i}, i \geq 0\big)$ is a $\U^{(n)}$-Markov chain. From here, since $\U$ is positive recurrent, the ergodic theorem applies, and $|\{i: X_i=x, i \in [0,m-1]\}|/m\rightarrow \rho_x$ as $m\rightarrow \infty$ (a.s.), and $\sum_{N\geq n}\rho_N\to 0$ (a.s.) when $n\to+\infty$ (since $\rho$ is tight, as a probability measure). Applying the ergodic theorem to $X^{(n)}$ too, allows seeing that for $\rho^{(n)}_k = \rho_k$ for $k\leq n-1$ and $\rho^{(n)}_n= \sum_{N\geq n}\rho_N$. Hence $\rho^{(n)}\rightarrow \rho$, weakly.
\end{proof}

\begin{pro}\label{pro:qfqeh2} There exists some \sut{} irreducible transition matrices $\U$ with several positive, non-proportional right eigenvectors associated with the eigenvalue 1.
\end{pro}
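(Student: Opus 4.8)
The plan is to exhibit an explicit example of a \sut{} irreducible transition matrix whose associated matrix $\Id - \U$ has a kernel of dimension at least two on the space of (not necessarily summable) right eigenvectors, i.e.\ the equation $\U h = h$ admits two linearly independent solutions $h$. The key observation is that a right eigenvector $h = (h_a, a \geq 0)$ associated with eigenvalue $1$ satisfies $h_a = \sum_{j \geq a-1} \U_{a,j} h_j$ for every $a \geq 0$; because $\U$ is \sut{}, the smallest index appearing on the right-hand side is $a-1$, and the coefficient $\U_{a,a-1}$ is strictly positive (by irreducibility, the down-steps $-1$ must be available). Consequently this relation can be rearranged to express $h_{a-1}$ in terms of $(h_a, h_{a+1}, \ldots)$: namely $h_{a-1} = \bigl(h_a - \sum_{j \geq a}\U_{a,j} h_j\bigr)/\U_{a,a-1}$. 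Thus, in stark contrast with the invariant-measure recursion \eqref{eq:piU} which propagates \emph{upward} and pins down $\pi$ from $\pi_0$, the eigenvector recursion propagates \emph{downward}: fixing the ``tail'' of $h$ determines the earlier coordinates, and there is genuine freedom in choosing the tail.

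First I would reduce the problem to a clean special case. Take a transition matrix that is constant in a suitable sense far from the origin — for instance one whose rows, beyond some threshold, depend only on the increment $j-i$ (a spatially homogeneous \sut{} kernel with jumps in $\{-1, 0, 1, 2, \ldots\}$). For such a kernel the recurrence $h_a = \sum_{k \geq -1} p_k\, h_{a+k}$ (with $p_k = \U_{a,a+k}$ for large $a$) becomes a linear recurrence with constant coefficients, whose solutions are governed by the roots of the characteristic equation $\sum_{k \geq -1} p_k\, r^{k} = 1$, equivalently $\sum_{k} p_k r^{k+1} = r$. Since the $p_k$ sum to $1$, $r=1$ is always a root; the point is to arrange the jump distribution so that a \emph{second} root $r_0 \in (0,1)$ also solves the equation. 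Then both the constant vector $h^{(1)} = (1,1,1,\ldots)$ and the geometric vector $h^{(2)} = (r_0^{\,a}, a \geq 0)$ (suitably corrected near the boundary to satisfy the finitely many non-homogeneous equations at small indices) are right eigenvectors for eigenvalue $1$, and they are manifestly non-proportional.

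Concretely, I would pick a jump distribution supported on $\{-1, +1\}$ but biased — say $\U_{a,a-1} = q$, $\U_{a,a+1} = p$ with $p + q = 1$ and $p > q$ — for all $a \geq 1$, and fix the $0$-th row so that the chain is irreducible and sub-conservative is avoided (each row still sums to $1$). The characteristic equation $q\, r^{-1} + p\, r = 1$, i.e.\ $p r^2 - r + q = 0$, has the two roots $r=1$ and $r = q/p \in (0,1)$, so the two independent solutions $h^{(1)} \equiv 1$ and $h^{(2)}_a = (q/p)^a$ appear immediately away from the boundary. The main obstacle, and the only real work, is the boundary bookkeeping: the homogeneous recurrence holds only for $a \geq 1$ (or beyond the homogeneity threshold), so one must verify that the single remaining equation at $a=0$, namely $h_0 = \sum_{j} \U_{0,j} h_j$, can be satisfied simultaneously for two independent tail choices. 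Since the downward recursion shows that the map ``tail of $h$'' $\mapsto h_0$ is linear and the solution space of the homogeneous part is two-dimensional, while the boundary imposes at most one additional linear constraint, a two-dimensional (or at least two-dimensional after intersecting with one constraint, hence at least one-dimensional beyond the obvious solution) space of eigenvectors survives; I would simply choose $\U_{0,\cdot}$ explicitly so that both $h^{(1)}$ and the adjusted $h^{(2)}$ satisfy the $0$-th equation, which is an elementary finite computation. This yields the desired $\U$ and completes the proof.
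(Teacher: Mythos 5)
There is a genuine gap here, and it is fatal to the approach rather than a matter of ``boundary bookkeeping''. Your concrete example cannot work: a matrix with jumps in $\{-1,+1\}$ is tridiagonal, hence \slt{}, and the paper's Theorem~\ref{theo:hetjyfd}$(i)$ already shows that an irreducible \slt{} transition matrix has only constant right eigenvectors for the eigenvalue $1$ (the system $\U h=h$ is solved forward from $h_0$, so the solution space is one-dimensional). You can also see the failure inside your own scheme: the equation at $a=0$ evaluated on $h^{(2)}_a=(q/p)^a$ demands $\sum_{j}\U_{0,j}(q/p)^j=1$, yet $(q/p)^j<1$ for every $j\geq 1$, so any stochastic row $\U_{0,\cdot}$ with $\U_{0,0}<1$ gives a value strictly less than $1$; the only escape is $\U_{0,0}=1$, which destroys irreducibility. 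Thus the boundary constraint can never be made degenerate by a choice of the $0$-th row: it is satisfied by $\1$ and strictly violated by the geometric solution, so it cuts the two-dimensional solution space of the interior recurrence down exactly to the constants. Your dimension count (``at most one constraint, hence at least one extra dimension survives'') is off by precisely one.

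The deeper point is that no example that is spatially homogeneous beyond a threshold (with jumps bounded above) can work, so no amount of correction near the boundary saves the plan. First, positivity pins down the tail: for a homogeneous \sut{} kernel with positive drift, the characteristic equation $\sum_k p_k r^{k+1}=r$ has exactly two roots in the closed unit disk, namely $r_0\in(0,1)$ and $1$; all remaining roots are negative or complex with modulus larger than $1$, and a nonzero component on any of them makes $h_a<0$ for infinitely many $a$. Hence any positive solution satisfies $h_a=A+Br_0^a$ for all large $a$, which is eventually monotone, so $h$ attains either its supremum or its infimum once the finitely many remaining values are taken into account. Second, the maximum principle kills such an $h$: if a positive function with $\U h=h$ attains its supremum (or infimum) at some state $x_0$, then equality in $h_{x_0}=\sum_j \U_{x_0,j}h_j$ forces $h$ to equal $h_{x_0}$ on the support of $\U_{x_0,\cdot}$, and then everywhere by irreducibility. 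So every positive right $1$-eigenvector of such a chain is constant (this also covers zero and negative drift). This explains why the paper argues in the opposite direction: it first builds a bounded sequence $R$ oscillating between two levels so that neither its supremum nor its infimum is attained, with each $R_j$ strictly between the minimum and maximum of $\{R_{j-1}\}\cup\{R_\ell:\ell>j\}$, and only then constructs $\U$ row by row by writing $R_j$ as a barycenter of those values with positive weight on $j-1$ and on some $\ell>j$. The inhomogeneity at infinity is not an inconvenience to be corrected; it is the essential mechanism that defeats the maximum-principle obstruction.
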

We postpone the proof to \Cref{sec:teheesgs}, since it is related to the existence of \slt{} transition matrices with several invariant measures.

\subsubsection{A second proof of \Cref{theo:InvDistU}}
\label{sec:SP}Consider $\U$ an irreducible \sut{} transition matrix.
Since the transition matrix $\U^{(n)}$ is finite, we have
  \ben\label{eq:mttu1}
  \rho^{(n)}_a= \alpha_n \det\l(\Id-\U^{(n)}{}^{{\sf dep}(a)}\r).
  \een
  Here $\alpha_n$ is the only constant making of $\rho^{(n)}$ a probability distribution.~\\
   Now, we claim that for some constants $\alpha_n',\alpha''_n$, for all $a \in [0,n-1]$,
  \ben
 \det\l(\Id-\U^{(n)}{}^{{\sf dep}(a)}\r)
\label{eq:tjt2}  &=&\alpha'_n\,\DI{\U}{[0,a-1]}\prod_{j=a+1}^{n}\U_{a,a-1}\\
\label{eq:tjt3}  &=&\alpha''_n\,\DI{\U}{[0,a-1]}/\prod_{j=1}^{a}\U_{a,a-1}.
  \een
The invariant distribution $\pi=[\pi_a, 0\leq a\leq n]$ of any irreducible transition matrix $\M$ indexed by $[0,n]$ is proportional to $\det({\Id -\M}^{{\sf dep}(a)})$ (Section \ref{sec:MTT}). If such $\M$ is a
 \sut{} transition matrix, then each tree rooted at $a\in\cro{0,n}$ can be decomposed in two parts: a branch $n\mapsto n-1 \mapsto \cdots \mapsto a$ ``above $a$'', and a forest with set of roots on $[a,n]$, and other vertices on $[0,a-1]$, so that, as explained in Proposition \ref{pro:pos} leads to \eref{eq:tjt2} (since $\U^{(n)}$ is a \sut{}-transition matrix on a finite state space, and since $\prod_{j=a+1}^{n}\U^{(n)}_{a,a-1}=\prod_{j=a+1}^{n}\U_{a,a-1}$).
  Formula \eref{eq:tjt3} is obtained by dividing \eref{eq:tjt2} by the constant (depending only on $n$) $\prod_{j=1}^{n}\U^{(n)}_{a,a-1}=\prod_{j=1}^{n}\U_{a,a-1}$.

  Using \eref{eq:mttu1}, \eref{eq:tjt2} and \eref{eq:tjt3}, one sees that
  \[\frac{\rho^{(n)}_a}{\alpha_n\alpha''_n}= \DI{\U}{[0,a-1]}/\prod_{j=1}^{a}\U_{a,a-1}\]
  so that this constant sequence converges when $n\to+\infty$. Hence, Theorem \ref{theo:equiv}$(i)$ applies:  the measure $(\DI{\U}{[0,a-1]}/\prod_{j=1}^{a}\U_{a,a-1},a\geq 0)$ is invariant by $\U$.

\subsection{Almost lower triangular cases}
\label{sec:ALTC}
In the \slt{} case, neither uniqueness nor existence of invariant measures are guaranteed:
 \begin{theo}\label{theo:hetjyfd} Let $\L$ be an irreducible \slt{} transition matrix.
     \bir
     \itr $\L$ has a unique right eigenvector associated with the eigenvalue 1 (up to a multiplicative constant), and this is the vector whose entries are all equal to one.
     \itr The three following cases arise: $(a)$ $\L$ has no invariant measure in $\Mpq{\mathbb{N}}$, $(b)$ $\L$ has a unique invariant measure in $\Mpq{\mathbb{N}}$, $(c)$ $\L$ has several invariant measures in $\Mpq{\mathbb{N}}$.
     \eir
   \end{theo}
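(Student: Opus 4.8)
The plan is to handle the two items separately, proving (i) directly and establishing (ii) by exhibiting a representative of each of the three cases; the trichotomy itself is vacuously exhaustive, so the content of (ii) is the \emph{non-emptiness} of each case.

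\textbf{Item (i).} Since each row of $\L$ sums to one, $\L\1=\1$, so the constant vector is always a right eigenvector and only uniqueness is at stake. Let $h=(h_a,a\geq 0)$ satisfy $\L h=h$. Because $\L$ is \slt{}, the $i$-th equation is $\sum_{j\leq i+1}\L_{i,j}h_j=h_i$, and irreducibility together with the fact that the only up-step is $+1$ forces $\L_{i,i+1}>0$ for every $i$ (otherwise the states above $i$ would be unreachable from below). Hence one may solve
\[\L_{i,i+1}h_{i+1}=h_i-\sum_{j\leq i}\L_{i,j}h_j,\]
and I would prove $h_a=h_0$ for all $a$ by induction on $a$: if $h_0=\cdots=h_i=:c$, then using $\sum_{j\leq i}\L_{i,j}=1-\L_{i,i+1}$ the right-hand side equals $c\,\L_{i,i+1}$, whence $h_{i+1}=c$. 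This yields $h\propto\1$ and proves (i).

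\textbf{Item (ii), cases (a) and (b).} For these I would use the one-parameter ``catastrophe'' family with $\L_{i,i+1}=p_i$ for $i\geq 0$, $\L_{i,0}=1-p_i$ for $i\geq 1$ and $\L_{0,0}=1-p_0$, where $0<p_i<1$; this is irreducible and \slt{}. For $a\geq 1$ the only incoming positive transition is the up-step from $a-1$, so invariance forces $\pi_a=\pi_0\prod_{j=0}^{a-1}p_j$; substituting into the balance equation at $a=0$ and telescoping shows this sequence is invariant if and only if $\prod_{j\geq 0}p_j=0$. Thus $\sum_i(1-p_i)=\infty$ gives exactly one invariant measure (case (b)), while $\sum_i(1-p_i)<\infty$ (the transient regime, $\prod_j p_j>0$) leaves the forced sequence non-invariant, and since no other candidate exists there is then no invariant measure at all (case (a)).

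\textbf{Item (ii), case (c).} This is the delicate point and the main obstacle. The cut identity across level $a$ reads $\pi_{a-1}\L_{a-1,a}=\sum_{b\geq a}\pi_b\sum_{c<a}\L_{b,c}$, a backward recursion whose order is the maximal down-jump; for a spatially homogeneous bulk with down-jumps of size $\leq M$ its geometric solutions $r^a$ are governed by an explicit degree-$(M{+}1)$ polynomial having $r=1$ as a root. A Descartes sign count (nonnegative coefficients in the high-degree terms, a single negative linear coefficient, a positive constant term $u$) shows this polynomial has at most two positive roots; I expect this to imply that a homogeneous bulk can offer at most the constant measure and one further geometric measure, and that matching the boundary at $0$ destroys one of the two (in the explicit down-by-$2$ chain the two positive roots collide at $1$ precisely when the boundary becomes doubly stochastic), so that (c) cannot be realized by a homogeneous chain. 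I would therefore build $\L$ as a genuinely inhomogeneous, transient chain, most cleanly as a time-reversal (\Cref{theo:conn}, \Cref{theo:dqti}) of a transient \sut{} chain $\U$ possessing a non-constant positive harmonic function — e.g. $g(x)=\P_x(\tau_{\{0\}}=\infty)$, which satisfies $\U g=g$ everywhere and is a second right eigenvector for the eigenvalue $1$ as in \Cref{pro:qfqeh2}. Each such harmonic function should produce, through the reversal relation, a distinct invariant measure of $\L$, giving at least two non-proportional classes in $\Mpq{\N}$. The hard part throughout is exactly this: producing two independent \emph{positive} solutions of the backward invariance recursion that remain consistent at the boundary $0$, which is impossible in the homogeneous case and forces the inhomogeneous/transient-reversal construction just described.
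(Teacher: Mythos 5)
Your proof of (i) and your treatment of (ii)(a) and (ii)(b) are correct. Item (i) is the same triangular-system induction as in the paper. For (a) and (b) the paper uses the same catastrophe family you use (it takes $a_j=1-(j+1)^{-2}$, whose telescoping product gives $1/2\neq 1$, for case (a), and invokes tridiagonal chains together with \Cref{theo:InvDistU} for case (b)); your unified criterion $\prod_j p_j=0$ handles both cases at once and is a valid, slightly tidier, substitute.

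The genuine gap is in case (c), precisely at the step you single out as hard. Your architecture --- take an irreducible \sut{} matrix $\U$ with a second positive right eigenvector $R$ for the eigenvalue $1$, let $\pi$ be its unique invariant measure (\Cref{theo:InvDistU}), and push $R$ through the time-reversal $\L_{b,a}=\pi_a\U_{a,b}/\pi_b$ to obtain the $\L$-invariant measure $(\pi_aR_a)_{a\geq 0}$ --- is exactly the paper's proof (\Cref{sec:teheesgs}). But your proposed witness $g(x)=\P(\tau_{\{0\}}(Y)=+\infty~|~Y_0=x)$ is not a right eigenvector of $\U$: the first-step decomposition gives $g(x)=\sum_{y\geq 1}\U_{x,y}\,g(y)=\sum_{y}\U_{x,y}\,g(y)-\U_{x,0}\,g(0)$, and in a transient irreducible chain $g(0)>0$, so $g$ fails harmonicity at every state $x$ with $\U_{x,0}>0$ --- in particular at $x=1$, where $\U_{1,0}>0$ by irreducibility (the variant that is set to $0$ at the origin is harmonic off $0$ but not at $0$). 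Moreover, transience cannot be the operative hypothesis: a transient irreducible birth--death chain is \sut{}, yet $q_0=0$ forces $h_1=h_0$, and then $p_i(h_{i+1}-h_i)=q_i(h_i-h_{i-1})$ propagates $h\equiv h_0$, so such a chain has no non-constant positive harmonic function whatsoever. The pair $(\U,R)$ must therefore be built jointly, which is what \Cref{pro:qfqeh2} does: fix first an oscillating positive sequence $R$ such that no $R_k$ is the maximum or the minimum of $\{R_j:\ j\geq k-1,\ j\neq k\}$ (e.g. $R_k\approx 1$ for $k$ odd, $R_k\approx 2$ for $k$ even), write each $R_k$ as a barycentre $R_k=\sum_{\ell\geq k-1}\alpha_{k,\ell}R_\ell$ with $\alpha_{k,k-1}>0$ and some $\alpha_{k,\ell}>0$ for $\ell>k$, and define $\U_{k,\ell}:=\alpha_{k,\ell}$; this $\U$ is \sut{}, irreducible, and has both $R$ and the constant vector as non-proportional positive right eigenvectors. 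With that replacement your argument for (c) closes; the preliminary ``homogeneous bulk'' discussion is motivation only and is not needed for the proof.
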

   Proof of $(ii)(c)$ is postponed to \Cref{sec:teheesgs}.
   \begin{proof}[Proof of $(i)$ and $(ii)(a)$ and $(b)$]
     $(i)$ Write the system $\L R=R$ under the  triangular form $R_{i+1}=(R_i-\sum_{j:j\leq i} \L_{i,j}R_j)/\L_{i+1,i}$, for $i\geq 0$, so that the choice of $R_0=1$ fixes all the other entries to 1.\\
     $(ii)(b)$ All tridiagonal irreducible transition matrices have a unique invariant measure in $\Mpq{\mathbb{N}}$ since they are \sut{}, and then Theorem \ref{theo:InvDistU} applies.\\
     $(ii)(a)$ In some lecture notes on Markov chains, as an example of Markov chain on $\N$ with no invariant distribution, a transition matrix of type $\slt{}$ is often given (see e.g. \cite[Example 1.7.11]{N98}). For sake of completeness, we provide a similar example here.
     Take $\L_{0,1}=a_0=1$, and for $i\geq 1$, $\L_{i,i+1}=a_i$, $\L_{i,0}=1-a_i$. Notice that if $a_i\in (0,1)$ for all $i\geq 0$, then $L$ is irreducible. An invariant measure $\pi$ would satisfy  for $i>0$, $\pi_{i}=\pi_{i-1}\L_{i-1,i}$, so that $\pi_i=\pi_0\prod_{j=0}^{i-1}a_j$ and therefore $\pi_0= \sum_{k\geq 1}\pi_k \L_{k,0}=\pi_0\sum_{k\geq 1} (1-a_k)\prod_{j=0}^{k-1}a_j$. From this we see that: $\L$ has an invariant measure if and only if $\sum_{k\geq 1} (1-a_k)\prod_{j=0}^{k-1}a_j=1$. 
     Consider $a_0=1$, and for $j\geq 1$, $a_j= 1 -\frac{1}{(j+1)^2}=\frac{j(j+2)}{(j+1)^2}$  and then, since it is a telescopic product
     \[\sum_{k\geq 1} (1-a_k)\prod_{j=1}^{k-1}a_j= \sum_{k\geq 1}  (1-a_k)\prod_{j=1}^{k-1}\frac{j(j+2)}{(j+1)^2} =\sum_{k\geq 1}\frac{1}{(k+1)^2}  \frac{k+1}{2k}=1/2.\]
\end{proof}

\begin{theo}\label{theo:rec}  Let $\L$ be a \slt{} irreducible transition matrix and $(Y_i,i\geq 0)$ a $\L$-Markov chain. Set
  \[v_b(x)=\P(\tau_{\{0\}} (Y)>\tau_{\{b\}}(Y)~|~Y_0=x), ~~\textrm{ for } 0<x<b.\]
  We have
  \ben\label{eq:vbx}
  v_b(x)= \frac{\det(\Id-\L_{[1,x-1]})}{\det(\Id-\L_{[1,b-1]})}\prod_{j=x}^{b-1}\L_{j,j+1},\een
  and then, the transition matrix $\L$ is recurrent if and only if
  \ben\label{eq:srhs}
  \lim_{b\to +\infty} v_b(1)=\lim_{b\to+\infty}\frac{\prod_{j=1}^{b-1}\L_{j,j+1}}{\det(\Id-\L_{[1,b-1]})}=0.
\een 
\end{theo}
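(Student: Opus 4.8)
The plan is to mirror the proof of \Cref{theo:dfdqdq}, exploiting the up--down symmetry between the two settings. There the chain descends to $0$ by unit down-steps and one decomposes trajectories along \emph{last} passage times; here the chain can climb only by unit up-steps (the only up-steps are $+1$), so I would decompose along \emph{first} passage times. Fix $0<x<b$ and consider the paths $w=(w_0=x,\dots,w_f=b)$ that hit $b$ for the first time at their final step while staying in $[1,b-1]$ beforehand: these are precisely the trajectories realizing the event defining $v_b(x)$. Because up-steps equal $+1$, such a path visits each level $x,x+1,\dots,b$; writing $f_y=\min\{j:w_j=y\}$ one has $w_{f_{y+1}-1}=y$, so the step first entering $y+1$ is the up-step $y\to y+1$. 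This gives the decomposition: for each level $y$ from $x$ to $b-1$, on $[f_y,f_{y+1}-1]$ the path is an excursion from $y$ to $y$ staying in $[1,y]$ (it has not yet reached $y+1$ and avoids $0$), followed by the up-step $y\to y+1$, with the final step $b-1\to b$.

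Next I would evaluate each block with the heap-of-cycles formula recalled in \Cref{sec:HCT}: the weight of the cycles on $[1,y]$ with maximal pieces incident to $y$ is
\[{\sf Weight}(y,[1,y])=\frac{\det(\Id-\L_{[1,y-1]})}{\det(\Id-\L_{[1,y]})}.\]
Collecting the up-step weights $\L_{j,j+1}$ and the excursion weights then yields
\[v_b(x)=\Big(\prod_{j=x}^{b-1}\L_{j,j+1}\Big)\prod_{y=x}^{b-1}\frac{\det(\Id-\L_{[1,y-1]})}{\det(\Id-\L_{[1,y]})}=\Big(\prod_{j=x}^{b-1}\L_{j,j+1}\Big)\frac{\det(\Id-\L_{[1,x-1]})}{\det(\Id-\L_{[1,b-1]})},\]
the final equality being the telescoping of the determinant ratios (with $\det(\Id-\L_{[1,x-1]})=1$ when $x=1$). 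This is exactly \eref{eq:vbx}.

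For the recurrence criterion I would identify $\lim_b v_b(1)$ with the escape probability $\P(\tau_{\{0\}}(Y)=+\infty\mid Y_0=1)$. The events $\{\tau_{\{b\}}(Y)<\tau_{\{0\}}(Y)\}$ are nonincreasing in $b$ (reaching $b+1$ before $0$ forces reaching $b$ before $0$, since up-steps are $+1$), so $v_b(1)$ decreases to $\P(A\mid Y_0=1)$ where $A=\bigcap_b\{\tau_{\{b\}}<\tau_{\{0\}}\}$. On $A$ one has $\tau_{\{0\}}>\tau_{\{b\}}\geq b-1$ for all $b$, hence $\tau_{\{0\}}=+\infty$; conversely on $\{\tau_{\{0\}}=+\infty\}$ irreducibility forbids the chain from staying forever in any bounded set $[1,M]$ (a state visited infinitely often there would reach $0$ almost surely by the strong Markov property and a Borel--Cantelli argument), so the chain is a.s.\ unbounded and thus hits every $b$, giving $A$. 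Therefore $\lim_b v_b(1)=\P(\tau_{\{0\}}=+\infty\mid Y_0=1)$, which vanishes iff the irreducible chain is recurrent; substituting $x=1$ into \eref{eq:vbx} turns this into \eref{eq:srhs}.

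The computation of \eref{eq:vbx} is essentially mechanical once the first-passage decomposition is set up correctly, so the main obstacle I expect is the final paragraph: carefully justifying that $\lim_b v_b(1)$ equals the non-return probability, i.e.\ that under non-recurrence the chain genuinely escapes to $+\infty$ rather than lingering in a bounded subset of $\N$, which is exactly where the irreducibility hypothesis is indispensable.
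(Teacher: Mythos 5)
Your proof is correct, and it uses the same toolkit as the paper's — a level-by-level path decomposition whose blocks are evaluated by the heap-of-cycles formula ${\sf Weight}(y,[a,y])=\det(\Id-\L_{[a,y-1]})/\det(\Id-\L_{[a,y]})$, followed by a telescoping product — but the decomposition itself is genuinely different. The paper cuts the trajectory at the \emph{last} hitting times of the levels $x,x+1,\dots,b-1$, so its blocks are loops at $j$ confined to the \emph{upper} intervals $[j,b-1]$, with weights $\det(\Id-\L_{[j+1,b-1]})/\det(\Id-\L_{[j,b-1]})$, plus one initial block on all of $[1,b-1]$; the extra factor $\det(\Id-\L_{[x+1,b-1]})$ coming from that initial block is then cancelled by the telescoping. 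You cut at \emph{first} hitting times instead, so your blocks are loops at $y$ confined to the \emph{lower} intervals $[1,y]$, and the ratios $\det(\Id-\L_{[1,y-1]})/\det(\Id-\L_{[1,y]})$ telescope directly to \eref{eq:vbx} with no spurious factor; this is the exact mirror of the paper's own proof of \Cref{theo:dfdqdq} for the \sut{} case, and is arguably the cleaner of the two routes. Both decompositions are bijective and weight-preserving for the same structural reason, which you state correctly: the only up-steps are $+1$, so the first entrance into $y+1$ must be the step $y\to y+1$, and before that entrance the path (which avoids $0$) lies in $[1,y]$. Finally, where the paper dispatches the equivalence ``$\L$ recurrent $\iff \lim_b v_b(1)=0$'' in a single sentence, you give a full justification (monotonicity of the events $\{\tau_{\{b\}}<\tau_{\{0\}}\}$ in $b$, the bound $\tau_{\{b\}}\geq b-1$ forcing $\tau_{\{0\}}=+\infty$ on the intersection, and the converse via irreducibility and the strong Markov property showing the chain cannot linger in a bounded set on $\{\tau_{\{0\}}=+\infty\}$); this added detail is sound and fills a step the paper leaves implicit.
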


\begin{rem}\bir
  \itr Observe that $v_b(x)$ is not defined as $u_b(x)$ (of \Cref{theo:dfdqdq}), but rather, corresponds to $1-u_b(x)$. Note that since $Y$ has its jumps bounded by $+1$, it hits $[b,+\infty)$ at $b$ (starting from $x$). See Remark \ref{rem:refsq} for a subtle point. 
  \itr By the forthcoming \Cref{pro:rytou}, Condition \eref{eq:srhs} in \Cref{theo:rec} is equivalent to 
  \ben\frac{ \det( \Id-\L_{[0,b-1]}) }{\det(\Id-\L_{[1,b-1]})}\to 0.\een
  \eir \end{rem}
\begin{proof}[Proof of Theorem \ref{theo:rec}] The proof is similar to that of \Cref{theo:dfdqdq}.
Recurrence is equivalent to $v_1(b)\to 0$ when $b\to+\infty$.
Here, for $0<x<b$, use the last hitting time of the numbers in $[x,b-1]$ in the decomposition and get \eref{eq:vbx} by simplifying the following telescopic product
\be
v_b(x) & = & \Big(\prod_{j=x}^{b-1}\L_{j,j+1}\Big)\frac{\det(\Id-\L_{[1,x-1]})\det(\Id-\L_{[x+1,b-1]})}{\det(\Id-\L_{[1,b-1]})}\prod_{j=x+1}^{b-1} \frac{\det(\Id-\L_{[j+1,b-1]})}{\det(\Id-\L_{[j,b-1]})}.
\ee
\end{proof}

An analogue of \Cref{pro:ht}:
\begin{pro}\label{pro:fqqfd} For $x\in (a,b)$, set $\ar{v}_{\leq a,b}(x;z)=\E\l[ z^{\tau_{\{b\}}(Y)}\1_{\tau_{[0,a]}(Y)> \tau_{\{b\}}(Y)}~|~Y_0=x\r]$ and $\ra{v}_{\leq a,b}(x;z)=\E\l[ z^{\tau_{\{a\}}(Y)}\1_{\tau_{[0,a]}(Y)< \tau_{\{b\}}(Y)}~|~Y_0=x\r]$.
We have
\be
\ar{v}_{\leq a,b}(x;z) &=& \frac{\det\left( \Id - z\L_{[a+1,x-1]} \right)}{\det\left( \Id - z\L_{[a+1,b-1]} \right) }\prod_{j=x}^{b-1} (z\L_{j,j+1})\\
\ra{v}_{\leq a,b}(x;z)&=& \sum_{j=0}^a\l[ (\Id -z\L_{[a+1,b-1]})^{-1} (z\L_{[a+1,b-1]\times [0,b-1]})\r]_{x,j}.
\ee
  \end{pro}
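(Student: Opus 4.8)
The plan is to mirror the proof of \Cref{theo:dfdqdq} exactly, but now decomposing paths according to the \slt{} structure (jumps bounded above by $+1$) rather than the \sut{} structure, and with each step weighted by $z\L_{i,j}$ instead of $\U_{i,j}$ so that the extra factor of $z$ tracks the path length and produces the generating function.

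For the first identity, concerning $\ar{v}_{\leq a,b}(x;z)$, I would consider the set of paths starting at $x$ that reach $b$ before hitting $[0,a]$, and stay in $[a+1,b-1]$ until the final step to $b$. Since $Y$ has upward jumps bounded by $+1$, such a path must pass through each level $x, x+1, \dots, b-1, b$, and hits $b$ for the first time at its end. Using the \emph{first} (rather than last) passage times at the successive levels $x, x+1, \dots, b-1$, I would decompose the path into: an excursion that starts and ends at $x$ staying in $[a+1,b-1]$, followed by the step $x\to x+1$, then an excursion at $x+1$ staying in $[a+1,b-1]$, and so on up to the final step $b-1\to b$. Each excursion at level $y$ staying in $[a+1,b-1]$ is a heap of cycles with maximal pieces incident to $y$, whose $z$-weighted generating function is, by \Cref{pro:dqd} (Viennot's Prop.~5.3),
\[
{\sf Weight}(y,[a+1,b-1]) = \frac{\det(\Id - z\L_{[a+1,y-1]})\,\det(\Id - z\L_{[y+1,b-1]})}{\det(\Id - z\L_{[a+1,b-1]})},
\]
with the transpose convention appropriate to the \slt{} case. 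Multiplying these contributions for $y = x, x+1, \dots, b-1$ together with the up-steps $\prod_{j=x}^{b-1}(z\L_{j,j+1})$ gives a telescoping product of determinants in which all but the boundary factors cancel, leaving exactly the claimed expression
\[
\ar{v}_{\leq a,b}(x;z) = \frac{\det(\Id - z\L_{[a+1,x-1]})}{\det(\Id - z\L_{[a+1,b-1]})}\prod_{j=x}^{b-1}(z\L_{j,j+1}).
\]
This is the direct analogue of the telescoping computation closing the proof of \Cref{theo:dfdqdq}, and setting $z=1$ recovers $v_b(x)$ of \Cref{theo:rec}.

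For the second identity, concerning $\ra{v}_{\leq a,b}(x;z)$, the event is that the chain enters $[0,a]$ before reaching $b$. Here the appropriate tool is the finite-state generating-function formula used in the remark following \Cref{pro:ht}: a path from $x$ that avoids $\{b\}$ and avoids entering $[0,a]$ until its last step makes $k$ interior moves inside $[a+1,b-1]$, each contributing the matrix $z\L_{[a+1,b-1]}$, before a final step into $[0,a]$, contributing $z\L_{[a+1,b-1]\times[0,b-1]}$ restricted to a target $j\in[0,a]$. Summing the geometric series $\sum_{k\ge0}(z\L_{[a+1,b-1]})^k = (\Id - z\L_{[a+1,b-1]})^{-1}$ and then summing over all terminal states $j\in[0,a]$ yields precisely
\[
\ra{v}_{\leq a,b}(x;z) = \sum_{j=0}^{a}\bigl[(\Id - z\L_{[a+1,b-1]})^{-1}(z\L_{[a+1,b-1]\times[0,b-1]})\bigr]_{x,j}.
\]
I expect the main obstacle to be bookkeeping rather than conceptual: keeping the index ranges and the transpose convention (\slt{} versus \sut{}) consistent throughout, and verifying that the heap-of-cycles weight formula applies with the correct incidence set so that the telescoping in the first part genuinely collapses to the two boundary determinants. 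The reflection/transposition needed to pass from the \sut{} decomposition of \Cref{theo:dfdqdq} to the \slt{} setting here is exactly the one recorded in the remark after \Cref{lem:etyyu}, so no new ideas are required beyond careful adaptation.
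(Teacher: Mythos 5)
Your second identity is proved correctly, and by the same route as the paper: it is the geometric-series argument of the remark following \Cref{pro:ht} (here no projected matrix $\L^{(b)}$ is needed, since upward jumps of a \slt{} chain equal $+1$, so avoiding $[0,a]$ and $b$ before the final jump is the same as staying in $[a+1,b-1]$). Your plan for the first identity --- a passage-time decomposition into excursions weighted by heaps of cycles, with each step carrying a factor $z$ --- is also the paper's plan (its proof just says ``simple adaptation of \Cref{pro:ht}'', i.e.\ of the decompositions in \Cref{theo:dfdqdq} and \Cref{theo:rec}).

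However, the execution of the first identity contains a genuine error. You announce a decomposition at the \emph{first} passage times of the levels $x+1,\dots,b$, but you then assign to the excursion at every level $y$ the heap-of-cycles weight on the whole box $[a+1,b-1]$,
\[
\frac{\det(\Id - z\L_{[a+1,y-1]})\,\det(\Id - z\L_{[y+1,b-1]})}{\det(\Id - z\L_{[a+1,b-1]})}.
\]
This is inconsistent with that decomposition: before its first visit to $y+1$ the chain cannot exceed $y$ (up-steps are exactly $+1$), so the excursion at level $y$ is confined to $[a+1,y]$, and its correct weight is $\det(\Id - z\L_{[a+1,y-1]})/\det(\Id - z\L_{[a+1,y]})$. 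Worse, the product you claim telescopes does not: writing $D_{u,v}=\det(\Id - z\L_{[u,v]})$, for $x=a+1$ and $b=a+3$ your product of weights equals
\[
\frac{(1-z\L_{a+1,a+1})(1-z\L_{a+2,a+2})}{D_{a+1,a+2}^{\,2}},
\]
whereas the claimed prefactor is $1/D_{a+1,a+2}$; the two differ by the cross term $z^{2}\L_{a+1,a+2}\L_{a+2,a+1}$ in $D_{a+1,a+2}$, which is nonzero whenever $\L_{a+2,a+1}>0$ (e.g.\ in the tridiagonal case). The repair is immediate and yields a proof at least as clean as the paper's: with the correct first-passage weights the determinantal prefactor is $\prod_{y=x}^{b-1} D_{a+1,y-1}/D_{a+1,y} = D_{a+1,x-1}/D_{a+1,b-1}$, which telescopes to the stated formula. (Alternatively, follow the paper's last-passage decomposition as in \Cref{theo:rec}: there only the excursion at the starting level $x$ lives in the full box $[a+1,b-1]$, while the excursion at each later level $j$ lives in $[j,b-1]$ with weight $D_{j+1,b-1}/D_{j,b-1}$.)
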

	The proof is a simple adaptation of that of \Cref{pro:ht}.
\begin{rem}\label{rem:abs0l} Absorption at 0: consider a \slt{} transition matrix $\L$, for which $\sum_{k\geq 0}\L_{0,k}<1$, and add  again an additional absorbing state $\dagger$ to the state space, and set $\L_{0,\dagger}=1-\sum_{k\geq 0}\L_{0,k}$ and $\L_{\dagger,\dagger}=1$. The absorbed mass at $\dagger$ starting from $x$ is 
  $B_\dagger(x)=\P(\tau_{\{\dagger\}}(Y)<+\infty~|~Y_0=x)$
  for $\L$-Markov chain $Y$,
and the corresponding (defective) hitting time generating function is
$b_\dagger(x;z)=\E(z^{\tau_{\{\dagger\}}(Y)}\1_{\tau_{\{\dagger\}}(Y)<+\infty}~|~Y_0=x)$.
Recall \eref{eq:rauxz}. We have $B_\dagger(x)=b_\dagger(x;1)$ and 
\[b_\dagger(x;z)= \frac{z\L_{0,\dagger}\ra{v}(x;z)}{1-\left(z\L_{0,0}+\sum_{y = 1}^\infty zL_{0,y}\ra{v}(y;z) \right) }
\]
where $\ra{v}(x;z)=\lim_{b\to +\infty}\ra{v}_{\leq 0,b}(x;z)$.\end{rem}

    An invariant measure $\eta$ satisfies $\eta_b= \sum_{a\geq b-1} \eta_a \L_{a,b}$ so that
\ben\label{eq:sgtj}
\eta_{b-1}=\frac{\eta_b(1-\L_{b,b})-\sum_{a\geq b+1} \eta_a\L_{a,b}}{\L_{b-1,b}}. 
\een
The fact that $\eta_{b-1}$ is expressed using the $\eta_a$ with larger indices $a$ brings a very important difficulty here: formula \eref{eq:sgtj} can be used to check that a sequence $(\eta_k, k\geq 0)$ is indeed invariant, but, it seems unsuitable to compute an invariant distribution; and once again, such a solution does not exist in all generality. 
\begin{theo}\label{theo:f978sdqs}  Let $\L$  be an irreducible \slt{} transition matrix with \underbar{finite size} (indexed by $[0,s]\times[0,s]$). For any $\eta_0>0$, set for $a\in[0,s]$,
  \ben\label{eq:fqq8g} \eta_a = \eta_0 \,\det\l(\Id-\L_{[a+1,s]}\r)~\prod_{i=1}^{a}\L_{i-1,i}.\een
 The measure $(\eta_a,a\in[0,s])$ is invariant by $\L$ (and by Perron-Frobeniüs, there is a single class of invariant measures).
 \end{theo}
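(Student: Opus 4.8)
The plan is to identify the proposed measure, up to the overall factor $\eta_0$, with the Markov-chain-tree expression for the stationary distribution of the finite irreducible chain $\L$. Recall from \eref{eq:trehy} (and \eref{eq:rgzf}) that the unique, by Perron--Frobeni\"us, invariant distribution of any finite irreducible transition matrix on $\cro{0,s}$ is proportional to $a\mapsto \det\l(\Id-\L^{{\sf dep}(a)}\r)$, where $\Id-\L^{{\sf dep}(a)}$ is obtained by deleting line and column $a$ from $\Id-\L$. Hence it suffices to prove the determinantal identity
\[\det\l(\Id-\L^{{\sf dep}(a)}\r)=\det\l(\Id-\L_{[a+1,s]}\r)\prod_{i=1}^{a}\L_{i-1,i},\qquad a\in\cro{0,s},\]
for then $\eta_a=\eta_0\,\det\l(\Id-\L^{{\sf dep}(a)}\r)$ is a positive scalar multiple of the invariant distribution, which settles both invariance and, with Perron--Frobeni\"us, uniqueness.

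First I would factor the left-hand side. The matrix $\Id-\L^{{\sf dep}(a)}$ is indexed by $\cro{0,a-1}\cup\cro{a+1,s}$. Its block with rows in $\cro{0,a-1}$ and columns in $\cro{a+1,s}$ is identically zero: such an entry is $-\L_{i,j}$ with $i\leq a-1<a+1\leq j$, so $j>i+1$ and the \slt{} property forces $\L_{i,j}=0$. The remaining matrix is therefore block lower triangular with diagonal blocks $\Id-\L_{[0,a-1]}$ and $\Id-\L_{[a+1,s]}$, whence
\[\det\l(\Id-\L^{{\sf dep}(a)}\r)=\det\l(\Id-\L_{[0,a-1]}\r)\,\det\l(\Id-\L_{[a+1,s]}\r).\]

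The heart of the argument is then the identity $\det\l(\Id-\L_{[0,a-1]}\r)=\prod_{i=1}^{a}\L_{i-1,i}$, and this is the step I expect to be the real content, since it is where stochasticity enters (the determinant on the left is, a priori, a full polynomial in the entries of the leading block). I would prove it by induction on $a$ using the row sums of $B:=\Id-\L_{[0,a-1]}$. Because $\L$ is \slt{} and stochastic, for $i\leq a-1$ the $i$-th row of $B$ sums to $\sum_{j\geq a}\L_{i,j}$, which vanishes unless $i=a-1$, in which case it equals $\L_{a-1,a}$. Adding all the other columns of $B$ to its last column is an elementary column operation, hence preserves $\det B$, and it turns that column into the vector of row sums $(0,\dots,0,\L_{a-1,a})^{\top}$; expanding along it yields the recursion $\det\l(\Id-\L_{[0,a-1]}\r)=\L_{a-1,a}\,\det\l(\Id-\L_{[0,a-2]}\r)$, with empty base case equal to $1$. (Alternatively, this recursion is exactly what the \slt{} determinant expansion in the remark following \Cref{lem:etyyu} gives once the row sums are used.) Iterating produces $\prod_{i=1}^{a}\L_{i-1,i}$.

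Combining the two displayed identities gives $\det\l(\Id-\L^{{\sf dep}(a)}\r)=\det\l(\Id-\L_{[a+1,s]}\r)\prod_{i=1}^{a}\L_{i-1,i}$, hence $\eta_a=\eta_0\,\det\l(\Id-\L^{{\sf dep}(a)}\r)$, which completes the proof; note that irreducibility forces $\L_{i-1,i}>0$ for $1\leq i\leq s$, so the $\eta_a$ are genuinely positive. A more computational alternative, parallel to the proof of \Cref{theo:InvDistU}, would verify $\eta_b=\sum_{a\geq b-1}\eta_a\L_{a,b}$ directly from a tail-determinant recursion for $\det\l(\Id-\L_{[a+1,s]}\r)$ analogous to \Cref{lem:fdher}; but routing through the matrix-tree formula avoids having to guess and check that recursion.
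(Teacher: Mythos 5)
Your proof is correct, and it takes a genuinely different route from the paper for the decisive step. Both arguments begin the same way: by \eref{eq:trehy}/\eref{eq:rgzf}, the invariant distribution of the finite irreducible chain is proportional to $a\mapsto\det\l(\Id-\L^{{\sf dep}(a)}\r)$, so everything reduces to the identity $\det\l(\Id-\L^{{\sf dep}(a)}\r)=\det\l(\Id-\L_{[a+1,s]}\r)\prod_{i=1}^{a}\L_{i-1,i}$. The paper proves this identity combinatorially: $\det\l(\Id-\L^{{\sf dep}(a)}\r)$ is the total weight of spanning trees rooted at $a$, the \slt{} geometry forces every such tree to contain the branch $0\mapsto 1\mapsto\cdots\mapsto a$ (weight $\prod_{i=1}^{a}\L_{i-1,i}$), and the remaining edges form a forest with node set $[a+1,s]$ and roots in $[0,a]$, whose total weight is $\det\l(\Id-\L_{[a+1,s]}\r)$ by the \slt{} analogue of \Cref{pro:pos}. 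You instead prove it by pure linear algebra: block lower-triangularity of $\Id-\L^{{\sf dep}(a)}$ yields the factorization $\det\l(\Id-\L_{[0,a-1]}\r)\det\l(\Id-\L_{[a+1,s]}\r)$, and your row-sum/column-operation induction, which is exactly where stochasticity enters, evaluates $\det\l(\Id-\L_{[0,a-1]}\r)=\prod_{i=1}^{a}\L_{i-1,i}$. The two proofs are structurally parallel (your block split mirrors the tree splitting at $a$; your row-sum computation mirrors the fact that the only spanning structure below $a$ is the mandatory branch), but yours is more elementary and self-contained, since it avoids the forest matrix-tree theorem, and as a by-product it gives a direct finite-matrix proof of $\det\l(\Id-\L_{[0,a-1]}\r)=\prod_{i=1}^{a}\L_{i-1,i}$, an identity the paper only derives later (\Cref{pro:rytou}$(i)$) via the time-reversal relation. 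What the paper's combinatorial route buys in exchange is the structural picture of \slt{} spanning trees, which it reuses elsewhere (the second proof of \Cref{theo:InvDistU} in \Cref{sec:SP}, and the tridiagonal analysis in \Cref{sec:sqshtqdf}). One cosmetic remark: your reading of $\eta_0$ as an overall constant is the right one, since plugging $a=0$ literally into \eref{eq:fqq8g} would force $\det\l(\Id-\L_{[1,s]}\r)=1$; the formula should be understood as defining a measure up to a free positive factor.
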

 \begin{proof} This is a consequence of \eref{eq:rgzf} and of \Cref{pro:pos}.
Indeed, observe the geometry of the graph with vertex set $[0,s]$ and edge set $\{(i,j): \L_{i,j}>0\}$.  Take any spanning tree rooted at $a$: The vertices in $[0,a-1]$ can be connected to $a$ only using the edges $0\mapsto 1 \mapsto \cdots \mapsto a$ (so that the observed tree contains this branch), and the rest of the edges of the tree, forms a forest whose root set is contained in $[0,a]$ having set of nodes $[a+1,s]$.
   \end{proof}
Given this theorem, it is tempting to think that when $\L$ is indexed by $\N$, and say, irreducible, its invariant distribution is obtained by just taking $\lim_n \det\l(\Id-\L_{[a+1,n]}^{(n)}\r)~\prod_{i=1}^{a}\L_{i-1,i}$
where $\L^{(n)}$ is the projected transition matrix of $\L$ on $[0,n]$ defined by
\ben\label{eq:gsdfL}\bpar{ccl}
\L^{(n)}_{i,j} &=&\L_{i,j},~~\textrm{ for }~~ 0\leq i \leq n, 0\leq j \leq n-1\\
\L^{(n)}_{i,n} &=&\sum_{j\geq n} \L_{i,j}.\epar
\een
But it is not the case, since Theorem \ref{theo:hetjyfd} establishes that a transition matrix $\L$ is not assured to have an invariant distribution. The complete picture is more complex and some additional conditions are needed to get this kind of convergence result:
\begin{pro}\label{pro:ruygfs} Let $\L$ be a \slt{} irreducible transition matrix. 
Let $\rho^{(n)}$ be the invariant probability distribution of $\L^{(n)}$ (see \eref{eq:fqq8g}).
Set\[\eta^{(n)}_a:= {\rho^{(n)}_a}\,/\,{\rho^{(n)}_0},~~\textrm{ for } a\geq 0.\]
If the three following conditions hold:\\
(a)  there exists a non-negative sequence $(S_a,a\geq 0)$ such that, for each $b$, $\sum_{a:a\geq b+1}S_a\L_{a,b}<+\infty$, and which bounds uniformly $\eta^{(n)}$~: for all $a,n\geq 0$, $|\eta^{(n)}_a|\leq S_a$,\\
(b) $\lim_n\eta^{(n)}_a $ exists for each $a$; set $\eta_a:=\lim_n\eta^{(n)}_a$ for $n\to +\infty$,\\
(c) for every $a$, $\eta^{(n)}_n \sum_{j\geq n}\L_{j,a}\to 0$\\
then $\eta$ is invariant by $\L$.
\end{pro}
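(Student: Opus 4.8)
The plan is to start from the observation that, for each fixed $n$, the rescaled vector $\eta^{(n)}=(\eta^{(n)}_a)_{0\le a\le n}$ inherits the invariance of $\rho^{(n)}$ under $\L^{(n)}$ (divide the relation $\rho^{(n)}\L^{(n)}=\rho^{(n)}$ by the positive scalar $\rho^{(n)}_0$), so that $\eta^{(n)}_b=\sum_{a=0}^n \eta^{(n)}_a \L^{(n)}_{a,b}$ for every $b\in[0,n]$. I would then fix a level $b\ge 0$ and take $n\ge b+1$: since the projection \eref{eq:gsdfL} only modifies the last column, $\L^{(n)}_{a,b}=\L_{a,b}$ for all $a\in[0,n]$ whenever $b\le n-1$, and the \slt{} structure ($\L_{a,b}>0\imp a\ge b-1$) lets me rewrite the identity as $\eta^{(n)}_b=\sum_{a=b-1}^{n}\eta^{(n)}_a\L_{a,b}$. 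The whole proof then reduces to passing to the limit $n\to+\infty$ in this identity and recovering the \slt{} invariance equation $\eta_b=\sum_{a\ge b-1}\eta_a\L_{a,b}$.

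To control the limit I would split off the top index, writing $\eta^{(n)}_b=\sum_{a=b-1}^{n-1}\eta^{(n)}_a\L_{a,b}+\eta^{(n)}_n\L_{n,b}$, and treat the interior sum and the boundary term separately. For the interior sum I would invoke dominated convergence against the counting measure: by hypothesis (a) the summands satisfy $|\eta^{(n)}_a\L_{a,b}|\le S_a\L_{a,b}$ with $\sum_{a\ge b-1}S_a\L_{a,b}<+\infty$ (the two extra terms $a=b-1,b$ being finite), which provides an $n$-uniform summable dominating function, while hypothesis (b) gives the pointwise limit $\eta^{(n)}_a\L_{a,b}\to\eta_a\L_{a,b}$ for each fixed $a$; hence $\sum_{a=b-1}^{n-1}\eta^{(n)}_a\L_{a,b}\to\sum_{a\ge b-1}\eta_a\L_{a,b}$. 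For the boundary term I would use that $\eta^{(n)}_n\ge 0$ and $\L_{n,b}\le\sum_{j\ge n}\L_{j,b}$, so that $0\le\eta^{(n)}_n\L_{n,b}\le\eta^{(n)}_n\sum_{j\ge n}\L_{j,b}$, which tends to $0$ by hypothesis (c) applied with the index $a=b$. Together with $\eta^{(n)}_b\to\eta_b$ (again (b)), letting $n\to+\infty$ yields $\eta_b=\sum_{a\ge b-1}\eta_a\L_{a,b}$ for every $b$, i.e.\ $\eta\L=\eta$.

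I expect the only real obstacle to be the justification of the exchange of limit and infinite summation, which is precisely what the three hypotheses are tailored to bypass: (a) supplies the uniform, summable envelope that prevents mass from escaping to infinity along the interior of the sum, and (c) kills the single moving term carried by the folded level $n$, whose coefficient $\eta^{(n)}_n$ is not controlled by (b) alone (that hypothesis only furnishes limits of coefficients at fixed indices, never at the sliding index $n$). The remaining steps—identifying $\eta^{(n)}$ as an invariant vector, using that the projection alters only the last column, and reading off the \slt{} support condition $\L_{a,b}>0\imp a\ge b-1$—are routine bookkeeping.
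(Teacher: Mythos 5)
Your proof is correct and follows essentially the same route as the paper's: divide the finite-state invariance relation $\rho^{(n)}\L^{(n)}=\rho^{(n)}$ by $\rho^{(n)}_0$, use that the projection \eref{eq:gsdfL} leaves the columns $b\le n-1$ untouched, kill the boundary term carried by the index $n$ with hypothesis (c), and pass to the limit in the interior sum by dominated convergence from (a) and (b). If anything, your handling of the boundary term is slightly more careful than the paper's: the exact term is $\eta^{(n)}_n\,\L_{n,b}$, which you correctly bound by the quantity $\eta^{(n)}_n\sum_{j\geq n}\L_{j,b}$ controlled by (c), whereas the paper's displayed identity \eref{eq:fhrhrs} writes that larger quantity directly (harmlessly, since it is only used to show the term vanishes).
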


\begin{proof} 
	We will prove that $\eta$ satisfies \eqref{eq:sgtj}
Fix some $b\in\mathbb{N}$, and take $n>b$. Since $\rho^{(n)}=\rho^{(n)}\L^{(n)}$, 
\ben\label{eq:fhrhrs}
\rho^{(n)}_{b-1}= (1-\L_{b,b})\rho^{(n)}_b +\sum_{a=b+1}^{n-1}\rho^{(n)}_a \L_{a,b}+\rho^{(n)}_n \sum_{a\geq n}\L_{a,b}\een
or equivalently
\[\eta^{(n)}_{b-1}= (1-\L_{b,b})\eta^{(n)}_b +\sum_{a=b+1}^{n-1}\eta^{(n)}_a \L_{a,b}+\eta^{(n)}_n \sum_{a\geq n}\L_{a,b}.\]
By $(c)$,  $\eta^{(n)}_n \sum_{a\geq n}\L_{a,b}\to 0$, and then by Lebesgue dominated convergence theorem (using $(a)$ and $(b)$), $\sum_{a=b+1}^{n-1}\eta^{(n)}_a \L_{a,b}\to \sum_{a:a\geq b+1}\eta_a \L_{a,b}$. Finally, since $(\eta^{(n)}_{b-1}, (1-\L_{b,b})\eta^{(n)}_b)\to (\eta_{b-1}, (1-\L_{b,b})\eta_b)$ when $n\to +\infty$, the conclusion follows.
\end{proof}
\begin{pro}\label{pro:thezyr}Let $\rho^{(n)}$ be the invariant probability distribution of $\L^{(n)}$.
If $\rho^{(n)}$ converges weakly to some probability measure $\rho$ on $\N$, then $\rho$ is invariant by $\L$.
\end{pro}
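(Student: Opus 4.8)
The plan is to pass to the limit in the stationarity equations of the finite chains $\L^{(n)}$, deriving first an inequality via Fatou's lemma and then upgrading it to an equality by a conservation-of-mass argument. Since $\L$ is \slt{}, invariance of $\rho$ means precisely that $\rho_b=\sum_{a\geq b-1}\rho_a\L_{a,b}$ for every $b$, i.e. $\rho=\rho\L$, so this is the identity I want to reach.

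First I would fix a column index $b$ and write the balance equation $\rho^{(n)}=\rho^{(n)}\L^{(n)}$ at this column. For $n\geq b+1$ the truncation only modifies column $n$, so $\L^{(n)}_{a,b}=\L_{a,b}$, and the equation reads
\[
\rho^{(n)}_b=\sum_{a=0}^{n}\rho^{(n)}_a\L_{a,b}=\sum_{a\geq b-1}\rho^{(n)}_a\,\L_{a,b}\,\1_{a\leq n},
\]
the last rewriting using that $\L_{a,b}=0$ unless $a\geq b-1$. As $n\to+\infty$, the weak convergence hypothesis gives $\rho^{(n)}_a\,\L_{a,b}\,\1_{a\leq n}\to\rho_a\L_{a,b}$ for each fixed $a$, with all terms non-negative. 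Applying Fatou's lemma for the counting measure, and using that the left-hand side converges to $\rho_b$, I obtain
\[
\rho_b=\lim_n\rho^{(n)}_b=\liminf_n\sum_{a\geq b-1}\rho^{(n)}_a\,\L_{a,b}\,\1_{a\leq n}\ \geq\ \sum_{a\geq b-1}\rho_a\L_{a,b}=(\rho\L)_b,
\]
so that $\rho_b\geq(\rho\L)_b\geq 0$ holds for every $b$.

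The key remaining step is to convert this family of inequalities into equalities by a global mass argument. Summing over $b$ and invoking Tonelli's theorem (all terms non-negative), together with the fact that each row of $\L$ sums to one and that $\rho$ is a probability measure, I get
\[
\sum_b(\rho\L)_b=\sum_a\rho_a\sum_b\L_{a,b}=\sum_a\rho_a=1=\sum_b\rho_b .
\]
Hence $\sum_b\bigl(\rho_b-(\rho\L)_b\bigr)=0$ while each summand is non-negative, which forces $\rho_b=(\rho\L)_b$ for all $b$, i.e. $\rho=\rho\L$. This is exactly the invariance of $\rho$ by $\L$.

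The only genuine obstacle is controlling the infinite tail of $\sum_{a\geq b-1}\rho^{(n)}_a\L_{a,b}$ as $n\to+\infty$, since a column of $\L$ need not be summable and no obvious dominating function is available; this is precisely the difficulty that forced the extra hypotheses (a)--(c) in \Cref{pro:ruygfs}. The point is that here a dominated-convergence argument is unnecessary: the Fatou inequality goes in the safe direction, and the assumption that the limit $\rho$ is a bona fide probability measure (no mass escaping to infinity) is exactly what seals the gap, since it makes both $\sum_b(\rho\L)_b$ and $\sum_b\rho_b$ equal to $1$.
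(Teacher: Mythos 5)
Your proof is correct, but it takes a genuinely different route from the paper's. The paper passes to the limit directly in the equation $\rho^{(n)}=\rho^{(n)}\L^{(n)}$ and proves, for each fixed $b$, the convergence $\sum_{a=b+1}^{n-1}\rho^{(n)}_a\L_{a,b}\to\sum_{a\geq b+1}\rho_a\L_{a,b}$ by a tightness argument: since $\rho$ is a probability measure one can pick $K$ with $\rho_0+\cdots+\rho_K>1-\varepsilon$, whence $\sum_{j>K}\rho^{(n)}_j\leq 2\varepsilon$ for $n$ large, and the crude bound $\L_{a,b}\leq 1$ makes both tails uniformly negligible, giving the equality $\rho_b=(\rho\L)_b$ column by column. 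You instead keep only the ``cheap'' direction via Fatou, obtaining the superinvariance inequality $\rho_b\geq(\rho\L)_b$ for every $b$, and then upgrade it to equality by mass conservation: summing over $b$, Tonelli and the row-stochasticity of $\L$ give $\sum_b(\rho\L)_b=1=\sum_b\rho_b$, so the non-negative deficits $\rho_b-(\rho\L)_b$ must all vanish. Both arguments exploit exactly the same hypothesis --- that $\rho$ is a genuine probability measure, with no mass escaping to infinity --- but in different places: the paper uses it as tightness to control the tails of the finite sums, you use it as a global accounting identity. Your route is somewhat slicker and avoids all $\varepsilon$-bookkeeping (it is the standard ``a superinvariant probability measure is invariant'' trick); the paper's route yields a slightly stronger intermediate statement, namely the actual convergence of the truncated sums $\sum_{a=b+1}^{n-1}\rho^{(n)}_a\L_{a,b}$ for each $b$. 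Your closing remark is also on target: in \Cref{pro:ruygfs} the measures $\eta^{(n)}$ are normalized at the state $0$ rather than in total mass, so no finite-mass accounting is available there, which is precisely why the extra hypotheses (a)--(c) are imposed in that statement.
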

\begin{proof} Consider \eref{eq:fhrhrs} which is equivalent to $\rho^{(n)}\L^{(n)}=\rho^{(n)}$. 
Since $(\rho^{(n)}_{b-1}, (1-\L_{b,b})\rho^{(n)}_b)\to (\rho_{b-1}, (1-\L_{b,b})\rho_b)$, to conclude that $\rho \L=\rho$, it suffices to establish that for every $b\in\mathbb{N}$,
\ben
\sum_{a=b+1}^{n-1}\rho^{(n)}_a \L_{a,b}\sous{\longrightarrow}{n\to+\infty} \sum_{a= b+1}^{+\infty}\rho_a \L_{a,b}.\een
Take a small $`e>0$. As a measure over $\mathbb{N}$, $\rho$ is tight: there exists $K$ such that $\rho_0+\cdots+\rho_K>1-`e$. Take now $n$ large enough, so that
 $\rho_0^{(n)}+\cdots+\rho_K^{(n)}>1-2`e$, so that $\sum_{j>K}\rho_j^{(n)}\leq 2`e$.
Since $\L_{a,b}\leq 1$, for all $b$,
 \[\l|\sum_{a=b+1}^{n-1}\rho^{(n)}_a \L_{a,b}- \sum_{a= b+1}^{+\infty}\rho_a \L_{a,b}\r|\leq \l|\sum_{a=b+1}^{K}\rho^{(n)}_a \L_{a,b}- \sum_{a= b+1}^{K}\rho_a \L_{a,b}\r|+2`e\]
 (with the empty sum being equal to 0, when $b+1>K$).
Now, since $\rho^{(n)}\to\rho$ weakly, the r.h.s. is smaller than $3`e$ for $n$ large enough.
\end{proof}

\begin{rem} Tridiagonal transition matrices are \slt{}; some work is needed to see that the results of this section applies to the tridiagonal case (see Section \ref{sec:sqshtqdf}).
  \end{rem}

\section{Connections between  almost upper and lower triangular cases}
\label{sec:Connect_Up_Low}

According to \Cref{theo:InvDistU}, \sut{} transition matrices always have an invariant measure, while it is not the case for \slt{} cases (\Cref{theo:hetjyfd}). The next theorem says that one can associate with each \sut{} transition matrix a \slt{} one (its time-reversal).
\begin{theo}\label{theo:conn}
Consider an irreducible \sut{} transition matrix $\U=\begin{bmatrix} \U_{i,j}\end{bmatrix}_{0\leq i,j}$, with invariant measure $\pi$, then set $\L=\begin{bmatrix} \L_{i,j}\end{bmatrix}_{0\leq i,j}$ as
\ben\label{eq:tgdq}
\L_{i,j}= \pi_j\U_{j,i} /\pi_i. \een
\bir
 \itr $\L$ is an irreducible \slt{} transition matrix on $\mathbb{N}$, with invariant measure $\pi$ too.

\itr $\L$ is recurrent if and only if $\U$ is recurrent, 
\itr If  $\pi$ is a probability distribution then, if $(Y_k,k\in \Z)$ is a $\U$-Markov chain under its stationary regime (meaning that $Y_k\sim \pi$ for any $k\in \mathbb{Z}$), then the time-reversal of this chain, $(Y_{-k},k \in \Z)$ is a $\L$-Markov chain  under its stationary regime.
\itr $\L$ is positive recurrent if and only if $\U$ is positive recurrent.
\eir
 \end{theo}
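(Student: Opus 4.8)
The plan is to handle the four assertions by deriving $(ii)$ and $(iv)$ from a single algebraic identity between powers of $\U$ and $\L$, and by proving $(iii)$ through the classical time-reversal computation. For $(i)$, I would first check that $\L$ is a genuine transition matrix: since $\U$ is irreducible, its invariant measure $\pi$ (from \Cref{theo:InvDistU}) is strictly positive, so $\L_{i,j}=\pi_j\U_{j,i}/\pi_i\geq 0$ is well defined, and the invariance relation $\sum_j\pi_j\U_{j,i}=\pi_i$ gives $\sum_j\L_{i,j}=\pi_i^{-1}\sum_j\pi_j\U_{j,i}=1$. The \slt{} structure is immediate: $\L_{i,j}>0\imp\U_{j,i}>0\imp i\geq j-1\imp j\leq i+1$. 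Irreducibility transfers because $\L_{i,j}>0\Leftrightarrow\U_{j,i}>0$, so any $\U$-path from $a$ to $b$ of positive weight reverses into an $\L$-path from $b$ to $a$ of positive weight. Finally $\pi$ is $\L$-invariant, since $\sum_i\pi_i\L_{i,j}=\pi_j\sum_i\U_{j,i}=\pi_j$ using that the rows of $\U$ sum to one.

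For $(ii)$ and $(iv)$ the key step is the identity $\L^n_{i,j}=(\pi_j/\pi_i)\,\U^n_{j,i}$, valid for all $n\geq 0$ and all $i,j$. I would prove it by expanding $\L^n_{i,j}$ as a sum over paths $i\to k_1\to\cdots\to j$, substituting $\L_{a,b}=\pi_b\U_{b,a}/\pi_a$, and telescoping the $\pi$-factors; the surviving sum is exactly $\U^n_{j,i}$ read along the reversed path. In particular $\L^n_{x,x}=\U^n_{x,x}$, so $\sum_n\L^n_{x,x}=\sum_n\U^n_{x,x}$; since both chains are irreducible, recurrence is equivalent to the divergence of this series at one (hence any) state, which settles $(ii)$. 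For $(iv)$ I would combine this with \Cref{theo:InvDistU}, which states that $\U$ is positive recurrent if and only if $\sum_a\pi_a<\infty$. If $\sum_a\pi_a<\infty$, then $\pi$ is a finite $\L$-invariant measure and $\L$, being irreducible, is positive recurrent; conversely, if $\L$ is positive recurrent then it is recurrent, so its invariant measure is unique up to a scalar and therefore proportional to $\pi$, forcing $\sum_a\pi_a<\infty$. Hence $\U$ and $\L$ are positive recurrent simultaneously.

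For $(iii)$, assuming $\pi$ is a probability distribution, let $(Y_k,k\in\Z)$ be stationary, so $Y_k\sim\pi$ for every $k$. For the reversed process $Y_{-k}$ I would compute, by Bayes' rule and stationarity, $\P(Y_{-(k+1)}=j\mid Y_{-k}=i)=\pi_j\U_{j,i}/\pi_i=\L_{i,j}$, and then verify the Markov property of $(Y_{-k})$ in the usual manner: conditioning on the entire future $Y_{-k},Y_{-k+1},\dots$ reduces, via the forward Markov property of $Y$, to conditioning on $Y_{-k}$ alone. This identifies $(Y_{-k},k\in\Z)$ as a stationary $\L$-chain.

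The step requiring the most care is $(iv)$. One must \emph{not} assume a priori that $\pi$ is the only $\L$-invariant measure, since \Cref{theo:hetjyfd} shows that a \slt{} transition matrix may admit several. The argument must therefore first establish recurrence through the power identity of $(ii)$, and only then invoke the uniqueness (up to a scalar) of the invariant measure in the recurrent irreducible case in order to pin $\pi$ down and conclude that summability of $\pi$ is equivalent on both sides.
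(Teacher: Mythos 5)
Your proof is correct and follows essentially the same route as the paper: parts (i) and (iii) are the same direct verifications, and your identity $\L^n_{i,j}=\pi_j\U^n_{j,i}/\pi_i$, restricted to the diagonal, is exactly the paper's observation that a loop based at a state has equal $\U$-weight and $\L$-weight, with recurrence read off the Green's function $\sum_n \L^n_{x,x}$ rather than the first-return probability $\P(\tau_{\{0\}}(Y)<+\infty~|~Y_0=0)$. Your treatment of (iv) is in fact more careful than the paper's one-line argument (``both $\L$ and $\U$ have the same invariant measure''): you rightly point out that deducing summability of $\pi$ from positive recurrence of $\L$ requires the uniqueness (up to scalar) of the invariant measure of an irreducible \emph{recurrent} chain, a step the paper leaves implicit and which matters precisely because, by \Cref{theo:hetjyfd}, a general \lt{} transition matrix can carry several non-proportional invariant measures.
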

 \begin{proof}
   $(i)$: straightforward.\\
   $(ii)$: for an irreducible $\U$-Markov chain $(Y_j,j\geq 0)$, recurrence is equivalent to $\P(\tau_{\{0\}}(Y)<+\infty~|~Y_0=0)=1$. This means that the total ``$\U$-weights'' of the paths in the set $\cup_{k\geq 0}\{(x_0=0,x_1,\cdots,x_k,x_{k+1}=0), x_i>0, i\in[1,k]\}$ is 1 when the ``$\U$-weight'' of a given path $(x_0,\cdots,x_{k+1})$ is defined to be $\prod_{j=0}^k \U_{x_j,x_{j+1}}$. Since such paths start and end at 0, then their $\L$ weights and $\U$-weights coincide.
\\  $(iii)$: by translation invariance, it suffices to write
   \[\P(Y_k=y_k, 0\leq k \leq a)= \pi_{y_0} \prod_{j=0}^{a-1}\U_{y_j,y_{j+1}}= \pi_{y_a}\prod_{j=0}^{a-1} \L_{y_{j+1},y_j}=\P(Z_{a-j}=y_j, 0\leq j\leq a)\]
for $Z$ a $\L$-Markov chain under its invariant regime.\\
$(iv)$: by $(i)$, both $\L$ and $\U$ have the same invariant measure (which implies the statement).\\
\end{proof}
As a consequence of the Theorem we have 
\ben\label{eq:grsgfq}
\U_{1,0} \frac{\det(\Id -\U_{[2,b-1]})}{\det(\Id-\U_{[1,b-1]})}\xrightarrow[b\to \infty]{} 1 \equi \frac{\prod_{j=1}^{b-1}\L_{j,j+1}}{\det(\Id-\L_{[1,b-1]})}=\frac{ \det( \Id-\L_{[0,b-1]}) }{\det(\Id-\L_{[1,b-1]})}\xrightarrow[b\to \infty]{} 0,\een
but the value of the left-hand side of these formula are different, in general, for any fixed $b$.
See Remark \ref{rem:refsq} to explore further ``what is equal''.

\begin{theo}\label{theo:dqti} The \slt{} transition matrix $\L$ admits a time-reversal transition matrix $\U$   if and only if it possesses a positive invariant measure $\eta$ in which case $\U_{b,a}=\eta_a \L_{a,b} / \eta_b$, and $\U$ and $\L$ are both time-reversal of each other. As a consequence, for each $\L$, there is a bijection between the set of classes of invariant measures of $\L$ (in $\Mpq{\N}$) and the set of time-reversal transition matrices $\U$.
  \end{theo}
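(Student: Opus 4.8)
The plan is to fix, consistently with \Cref{theo:conn}, the working definition: a transition matrix $\U$ is a \emph{time-reversal} of $\L$ when there is a positive measure $\eta$ on $\N$ with $\eta_i\L_{i,j}=\eta_j\U_{j,i}$ for all $i,j$ (equivalently $\L_{i,j}=\eta_j\U_{j,i}/\eta_i$). Everything then follows by summing this single symmetric identity over one index at a time. First I would treat the ``only if'' direction: given such $\U$ and $\eta$, summing $\eta_i\L_{i,j}=\eta_j\U_{j,i}$ over $i$ and using $\sum_i\U_{j,i}=1$ gives $\sum_i\eta_i\L_{i,j}=\eta_j$, so $\eta$ is a positive invariant measure of $\L$; solving the identity for $\U$ reads off $\U_{b,a}=\eta_a\L_{a,b}/\eta_b$.

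For the ``if'' direction I would start from a positive invariant measure $\eta$ of $\L$ (recalling that invariant measures of an irreducible chain are strictly positive, so the quotients below make sense) and define $\U_{b,a}:=\eta_a\L_{a,b}/\eta_b$. The entries are non-negative, and the rows sum to one exactly because $\eta$ is invariant, since $\sum_a\U_{b,a}=\eta_b^{-1}\sum_a\eta_a\L_{a,b}=\eta_b^{-1}\eta_b=1$ (the sum being genuinely infinite, its convergence is part of invariance). The \slt{} constraint $\L_{a,b}>0\imp b\leq a+1$ together with $\eta>0$ gives $\U_{b,a}>0\imp a\geq b-1$, i.e.\ $\U$ is \sut{}; and since $\U_{b,a}>0\iff\L_{a,b}>0$, the edge set of $\U$ is the reversal of that of $\L$, whence $\U$ inherits irreducibility. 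The identity $\eta_a\L_{a,b}=\eta_b\U_{b,a}$ holds by construction and is symmetric in $(\L,\U)$, so $\L$ and $\U$ are time-reversals of each other; summing it over $b$ instead of $a$ shows $\eta$ is invariant for $\U$ as well, matching \Cref{theo:conn}.

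Finally I would promote the formula to a map on classes, $[\eta]\mapsto\U^{[\eta]}$ with $\U^{[\eta]}_{b,a}:=\eta_a\L_{a,b}/\eta_b$, and show it is a bijection onto the set of time-reversals of $\L$. It is well defined on $\Mpq{\N}$ because rescaling $\eta\mapsto c\eta$ cancels in the ratio; it is surjective by the ``only if'' direction, every time-reversal arising from some positive invariant measure. The crux is injectivity: if $\U^{[\eta]}=\U^{[\eta']}$, then on every edge $\L_{a,b}>0$ one has $\eta_a/\eta_b=\eta'_a/\eta'_b$, i.e.\ the ratio $\eta_a/\eta'_a$ is constant along each directed edge; I would then use irreducibility to connect any two states by a directed path and propagate this equality along it, forcing $\eta_a/\eta'_a$ to be globally constant and hence $[\eta]=[\eta']$. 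I expect this propagation-through-the-graph argument (and the accompanying observation that the measures are strictly positive, so no ratio is ill-defined) to be the only genuinely delicate point; the rest is bookkeeping on the symmetric flux identity.
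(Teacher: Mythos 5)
Your proof is correct and takes essentially the same route as the paper, whose own ``proof'' is only a two-line remark observing that a time-reversal $\U$ exists exactly when a positive invariant measure of $\L$ does. Your write-up simply fills in the details the paper leaves implicit: row-sums equal one via invariance, the \sut{} structure and irreducibility of $\U$, the symmetry of the identity $\eta_a\L_{a,b}=\eta_b\U_{b,a}$, and the injectivity of $[\eta]\mapsto\U^{[\eta]}$ obtained by propagating the ratio $\eta_a/\eta'_a$ along directed paths.
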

The proof of this theorem is simple since any such $\U$ is the time-reversal of $\L$, but it exists only when the positive invariant measure $\eta$ exists; as explained in \Cref{theo:hetjyfd}, some irreducible \slt{} do not admit any positive invariant measure.

\subsection{Algebraic connection between $\U$ and $\L$}

Consider $(\U,\L)$ a pair of irreducible transition matrices where $\U$ is \sut, $\L$ is \slt, and assume that they are time-reversal of each other. 
The invariant measure $\pi$ of $\U$ is unique, so that $\L_{i,j}=\pi_j\U_{j,i}/\pi_i$, $\forall i,j\geq 0$. 
A simple expansion of the determinant using the cycles decomposition of permutations, give, for every $a,b\geq 0$: 
  \ben\label{eq:tjt}
   \det\l( \Id-\U_{[a,b]}\r)=\det\l( \Id-\L_{[a,b]}\r).
  \een
Apart this formula, the main relation is
\ben\label{eq:base}
\L_{a,b}=\pi_{b}\U_{b,a}/\pi_a &=& \U_{b,a} \frac{\det\l( \Id-\U_{[0,b-1]}\r)/\prod_{j=1}^{b}\U_{j,j-1}}{  \det\l( \Id-\U_{[0,a-1]}\r)/\prod_{j=1}^{a}\U_{j,j-1}}.
\een
If $\U$ is known, and the corresponding $\L$ is searched, then this last formula, built using Theorem \ref{theo:InvDistU} allows to compute it. On the other hand, if $\L$ is known, but not $\U$, this is more difficult since we have no simple expression of $\pi$ in terms of $\L$ (and again, the existence and uniqueness of $\pi$ are not assured).

The following proposition provides some relations between the elements in the tuple $(\pi,\U,\L)$.
\begin{pro}\label{pro:rytou}For any $b\geq 0$,  set
  \[Z_b:=\prod_{j=1}^b \frac{\L_{j-1,j}}{\U_{j,j-1}},~~ Z'_b:=\prod_{j=1}^b \frac{\U_{j-1,j}}{\L_{j,j-1}},\] (where $Z_0=Z'_0=1$, which is compatible with the convention concerning empty products),
  \bir 
  \itr For any $a\geq 0$, $\dis{\det( \Id-\L_{[0,a-1]})}={\dis  \prod_{j=0}^{a-1} \L_{j,j+1}}$ (with the convention, $\det( \Id-\L_{[0,-1]})=1$).
  \itr For any  $b\geq 0$, $Z_b=Z'_b$.
  \itr The measure $(Z_0,Z_1,Z_2,\cdots)$ is invariant by both $\U$ and $\L$.
  \eir 
\end{pro}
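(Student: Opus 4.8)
The plan is to prove the three parts of \Cref{pro:rytou} in the order $(i)$, $(ii)$, $(iii)$, exploiting the time-reversal relation $\L_{a,b}=\pi_b\U_{b,a}/\pi_a$ together with the determinant identity \eqref{eq:tjt} and the explicit form of $\pi$ from \Cref{theo:InvDistU}.

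For $(i)$, I would apply the remark following \Cref{lem:etyyu} to the \slt{} matrix $A=\Id-\L_{[0,a-1]}$. In the sum over increasing sequences $s\in S^{a-1}$, the only contributing term is the one in which $s$ is the full sequence $(-1,0,1,\dots,a-1)$, because every factor $-\L_{j,j+1}$ coming from indices $j\notin s$ forces the presence of a strict super-diagonal entry, while the genuine descent entries $\L_{s_j,s_{j-1}+1}$ with $s_{j-1}+1<s_j$ would need large down-steps that are compatible with the \slt{} structure but carry the opposite sign. The cleanest route is simply to observe that $\det(\Id-\L_{[0,a-1]})$ telescopes: expanding along the first column (the only nonzero sub-diagonal entry below the diagonal block being $-\L_{j,j+1}$ in the transposed picture) leaves only the product of the super-diagonal entries $\prod_{j=0}^{a-1}\L_{j,j+1}$, since the matrix $\Id-\L_{[0,a-1]}$ restricted to $[0,a-1]$ has, in its \slt{} form, zero entries strictly above the first super-diagonal, so all but the purely super-diagonal permutation vanish. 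This is the main computational content, and it is short.

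For $(ii)$, I would write $Z_b/Z'_b=\prod_{j=1}^b \frac{\L_{j-1,j}\L_{j,j-1}}{\U_{j,j-1}\U_{j-1,j}}$ and substitute the reversal relation $\L_{j-1,j}=\pi_j\U_{j,j-1}/\pi_{j-1}$ and $\L_{j,j-1}=\pi_{j-1}\U_{j-1,j}/\pi_j$. Each factor then becomes $\frac{(\pi_j\U_{j,j-1}/\pi_{j-1})(\pi_{j-1}\U_{j-1,j}/\pi_j)}{\U_{j,j-1}\U_{j-1,j}}=1$, so $Z_b=Z'_b$ for all $b\geq 0$. This is immediate once the substitution is made.

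For $(iii)$, the idea is to identify $Z_b$ with the invariant measure $\pi$ up to a constant, and then invoke that $\pi$ is invariant by both $\U$ and $\L$ (which is \Cref{theo:conn}$(i)$ together with \Cref{theo:InvDistU}). Using $(i)$ and \eqref{eq:tjt}, I have $\prod_{j=1}^b\L_{j-1,j}=\det(\Id-\L_{[0,b-1]})=\det(\Id-\U_{[0,b-1]})$, whence
\[
Z_b=\frac{\prod_{j=1}^b\L_{j-1,j}}{\prod_{j=1}^b\U_{j,j-1}}=\frac{\det(\Id-\U_{[0,b-1]})}{\prod_{j=1}^b\U_{j,j-1}}=\pi_b/\pi_0
\]
by the formula of \Cref{theo:InvDistU}. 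Thus $(Z_0,Z_1,\dots)$ is proportional to $\pi$, which is invariant by both $\U$ and $\L$, giving $(iii)$. The main obstacle I anticipate is part $(i)$: getting the sign bookkeeping in the \slt{} determinant expansion exactly right so that only the single fully super-diagonal term survives. Everything downstream is then a direct substitution, so I would spend the bulk of the effort making the telescoping argument in $(i)$ airtight, most safely by citing the transposed version of \eqref{det1} in the remark after \Cref{lem:etyyu} and checking that the \slt{} constraint kills all sequences $s$ except the full one.
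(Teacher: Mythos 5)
Your parts $(ii)$ and $(iii)$ are essentially fine, but your proof of $(i)$ does not work, and since your $(iii)$ relies on $(i)$, the gap propagates.

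The problem with $(i)$: the identity $\det(\Id-\L_{[0,a-1]})=\prod_{j=0}^{a-1}\L_{j,j+1}$ is \emph{not} a polynomial identity in the entries of the submatrix $\L_{[0,a-1]}$, so no sign bookkeeping in an expansion of that determinant can make ``all but one permutation vanish''. Two concrete ways to see this. First, the right-hand side contains the factor $\L_{a-1,a}$, which is not an entry of $\L_{[0,a-1]}$ at all, so it cannot be produced by any manipulation of $\det(\Id-\L_{[0,a-1]})$ alone; the stochasticity of $\L$ (each full row of $\L$ sums to $1$) must enter somewhere, and your argument never uses it. Second, already for $a=2$,
\[\det(\Id-\L_{[0,1]})=(1-\L_{0,0})(1-\L_{1,1})-\L_{0,1}\L_{1,0},\]
so several permutations survive (in particular the diagonal one), and equality with $\L_{0,1}\L_{1,2}$ appears only after substituting $1-\L_{0,0}=\L_{0,1}$ and $1-\L_{1,0}-\L_{1,1}=\L_{1,2}$. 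Relatedly, in the expansion of the remark after \Cref{lem:etyyu}, the full sequence $s=(-1,0,1,\dots,a-1)$ contributes the diagonal product $\prod_{i=0}^{a-1}(1-\L_{i,i})$, not the super-diagonal product, so your identification of ``the only contributing term'' is also incorrect. Two correct routes: (a) the paper's, which never expands the \slt{} determinant at all: take $b=a+1$ in \eref{eq:base} to get $\L_{a,a+1}=\det(\Id-\U_{[0,a]})/\det(\Id-\U_{[0,a-1]})$, telescope over $a$, and convert the $\U$-determinant into an $\L$-determinant via \eref{eq:tjt}; or (b) a direct linear-algebra argument that does use stochasticity: in $\Id-\L_{[0,a-1]}$, rows $0,\dots,a-2$ sum to $0$ and row $a-1$ sums to $\L_{a-1,a}$, so adding all other columns to column $0$ turns column $0$ into $(0,\dots,0,\L_{a-1,a})^{t}$; expanding along it, the complementary minor is triangular with diagonal entries $-\L_{i,i+1}$, and the signs combine to give exactly $\prod_{j=0}^{a-1}\L_{j,j+1}$.

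As for the rest: your $(ii)$ is correct and is in substance identical to the paper's (the paper derives $\L_{b,b+1}\L_{b+1,b}=\U_{b,b+1}\U_{b+1,b}$ from \eref{eq:base} with $k=1$; your substitution of the reversal relation is the same computation). Your $(iii)$ is a legitimate variant of the paper's argument (the paper instead shows $\L_{a,b}=Z_b\,\U_{b,a}/Z_a$ for all $a,b$ and sums over a row of $\U$), but as written it inherits the gap through $(i)$. Note that you can decouple $(iii)$ from $(i)$ entirely: the reversal relation gives $\L_{j-1,j}/\U_{j,j-1}=\pi_j/\pi_{j-1}$, so $Z_b$ telescopes to $\pi_b/\pi_0$ with no determinant computation, and invariance under both $\U$ and $\L$ then follows from \Cref{theo:InvDistU} and \Cref{theo:conn}$(i)$.
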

The point $(ii)$ of \Cref{pro:rytou} is equivalent to 
\ben \label{eq:sfdsd}
\L_{a,a-1}\L_{a-1,a} =\U_{a-1,a} \U_{a,a-1}.\een 
\begin{proof}
  Taking $b=a+1$ in \eref{eq:base}, gives
\ben\label{eq:yrjugjg1}
\L_{a,a+1}= {\det\l( \Id-\U_{[0,a]}\r)}/{  \det\l( \Id-\U_{[0,a-1]}\r)}\een
from what we infer $(i)$ (using \eref{eq:tjt}).\par
For $k\geq 1$, using \eref{eq:base} we get
\ben\label{eq:htegfe}
\L_{b+k,b} \prod_{j=b+1}^{b+k} \L_{j-1,j}
=\U_{b,b+k}\prod_{j=b+1}^{b+k}\U_{j,j-1};
\een
for $k=1$ this provides $(ii)$. Further this equation rewrites $\L_{b+k,b}=Z_b \,\U_{b,b+k}\,/\, Z_{b+k}$, and since for $k=-1$ this is valid too 
(since $\L_{b-1,b}=Z_b \U_{b,b-1}/Z_{b-1}=U_{b,b-1}U_{b-1,b}/\L_{b,b-1}$, and this is true by $(ii)$) we have for all $a,b$, $\L_{a,b}=Z_b \,\U_{b,a}\,/\, Z_{a}$, which ensures $(iii)$.
\end{proof}

\begin{rem}\label{rem:LU}By \Cref{pro:rytou}, when $\L$ is known, each of the sequences $(\pi_a,a\geq 0)$ (up to a multiplicative constant), $(\U_{a,a-1},\;a\geq 1)$,$(\U_{a,a+1},\:a\geq 0)$, $((\U_{i,j},\;j>i),i\geq 0)$ allows to compute the others. For example, if $(\U_{a,a+1},a\geq 0)$ is known as well as $\L$, \eref{eq:sfdsd} allows to compute $(\U_{a,a-1}, a\geq 1)$, then \eref{eq:htegfe} allows to compute $((\U_{i,j}),j>i,i\geq 0)$, and $(Z_b,b\geq 0)$ which is proportional to $\pi$.  
 
\end{rem}

\begin{rem}\label{rem:refsq}
By reversibility, it can be seen that $u_b(x)$ and $1-v_b(x)$ are not equal in general. This comes from a lack of symmetry in the measured event. If instead ones observe the return time to 0 by random walks starting at 0, the symmetry comes back; but the formula are more complex. Denote by $Y^\U$ and $Y^\L$ Markov chains with respective transition matrices $\U$ and $\L$ such that $\pi_{a} \L_{a,b}=\pi_b\U_{b,a}$. 
  One has
  \ben \label{eq:retf}\P(\tau_{\{0\}}(Y^\U)< \tau_{[b,+\infty)}(Y^\U)~|~Y^{\U}_0=0)=\P(\tau_{\{0\}}(Y^{\L})< \tau_{\{b\}}(Y^{\L})~|~Y^{\L}_0=0).\een
 Formula \eref{eq:htegfe} allows seeing that the weight of a cycle $(a, a+b, a+b-1,\cdots,a+1,a)$ for $Y^\U$ is the same as the weight of the cycle $(a, a+1,\cdots,a+b, a)$ for $Y^\L$ (which allows proving \eref{eq:retf}, using combinatorial techniques).
  \end{rem}

\subsection{General presentation of \slt{} transition matrices using descent kernels}

\label{sec:cata}
A slight change of point of view on \slt{} transition matrices will allow us to search more efficiently the form of their time-reversal when they exist (see \Cref{pro:frsgr}), to design many \slt{} transition matrices $\L$ for which it is possible to find the time-reversal (Section \ref{sec:catal}), and, finally, to revisit some known results of the literature (the so-called, catastrophe transition matrices, see Section \ref{sec:CK}). The results collected in this section are of interest for the user searching some complete families of \slt{} transition matrices for which the invariant distribution are computable (for sake of teaching, statistical purpose, or simple curiosity).
\begin{defi}
A descent kernel $\DK=\begin{bmatrix}\DK_{i,j}\end{bmatrix}_{i,j\geq 0}$ over $\mathbb{N}$  is a \underbar{lower} triangular transition matrix $\DK_{i,j}>0 \imp j\leq i$ (with non-negative coefficients, summing to one on each row).
  \end{defi}
Each \slt{} transition matrix $\L$ can be represented uniquely as a pair $(v,\DK)$ where $v=(v_a,a\geq 0)$ is a sequence of elements of the interval $[0,1]$ (in fact, $(0,1)$ in the irreducible case, except $v_0\in[0,1)$), and $\DK$ a descending transition matrix, as follows:
\ben\label{eq:sgergf1}
\bpar{ccl}
\L_{b,a } & = & v_{b}\, \DK_{b,a}, \textrm{ for }b\geq a,\\
\L_{b,b+1}  & = & 1-v_{b},~~~~b\geq 0.\epar
\een
We will say that $(v,\DK)$ is the descent representation of $\L$.
In other words: $v_b$ is seen as the probability of descent from $b$, and $\DK$ the descent kernel, conditionally on a descent. With probability $1-v_b$, there is an ascent.

In the literature, instead of descent kernel, the word ``catastrophe'' is sometimes used, but with a slightly different construction, relying instead upon a standard birth-death process, mixed with a descend kernel  (in our representation, the random walker has to choose randomly between a $+1$ step and a descent taken according to $\DK$, see \eref{eq:sgergf1}). We think that our choice, while equivalent, is more compact, and allows to better observe the algebra into play (see e.g. Pollett \& al. \cite{MR2340220}, Brockwell \& al. \cite{MR677553}, Kapodistria \& al \cite{MR3436782}, and references therein).

\subsubsection{Representation of time-reversal of \slt{} transition matrices (descent form)}

From \Cref{theo:dqti} we see that irreducible \slt{} transition matrices $\L$ having an invariant measure and those admitting a time-reversal are the same. The representation of \slt{}-transition matrices $\L$ using descent kernels will allow us to have a better point of view on the form of their possible time-reversal.
\begin{pro} \label{pro:frsgr} If the set of time-reversals of an irreducible transition matrix $\L$ with descent representation $(v,\DK)$ is not empty, then each of its element $\U$  can be represented as follows
\ben\label{eq:sgergf2}
\bpar{ccl}
\U_{a, b} &=& u_a\, \alpha_a \beta_b\, \DK_{b,a},\textrm{ for }b\geq a,\\
\U_{a,a-1} &=& 1-u_a,~~~~a\geq 0, \textrm{ with }u_0=1
\epar\een
where $[\pi, u,\alpha,\beta]$ is a 4-tuple of sequences which satisfies\\
$(i)$~ $\pi$, $u$, $\alpha$, $\beta$ are sequences of positive real numbers, except for $\beta_0$ which is $0$ iff $v_0=0$; moreover $u_0=1$ and $u_j\in(0,1)$ for $j\geq 1$,\\
$(ii)$~$\sum_{b:b\geq a} \alpha_a \beta_b\DK_{b,a}=1$ for all $a\geq 0$,\\
$(iii)$~for all $b\geq 0$, $a\leq b$,
\ben\label{eq:gsh}
u_a\alpha_a= {1}/{\pi_a},~~\beta_b=\pi_b v_b.
\een
$(iv)~ $ $\pi_a(1-v_a)=\pi_{a+1}(1-u_{a+1})$ for all $a\geq 0$.
\end{pro}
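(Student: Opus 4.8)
The plan is to read the four sequences $(\pi,u,\alpha,\beta)$ directly off the time‑reversal relation and then check conditions $(i)$--$(iv)$ one at a time, the only genuinely delicate point being the strict positivity that makes the normalising constants $\alpha_a$ well defined. First I would invoke \Cref{theo:dqti}: since $\L$ admits a time‑reversal, it has a positive invariant measure, and any time‑reversal $\U$ has the form $\U_{a,b}=\pi_b\L_{b,a}/\pi_a$ (equivalently $\pi_a\U_{a,b}=\pi_b\L_{b,a}$) for a positive invariant measure $\pi=(\pi_a,a\geq 0)$ of $\L$, which I take as the first component of the tuple. Observe also that the graph of $\U$ is the reverse of that of $\L$, so $\U$ is again irreducible and \sut{}.

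Next I would substitute the descent representation of $\L$ into this relation, distinguishing the two regimes. For $b\geq a$ one has $\L_{b,a}=v_b\DK_{b,a}$, whence
\[
\U_{a,b}=\frac{\pi_b v_b}{\pi_a}\,\DK_{b,a},\qquad b\geq a .
\]
This suggests setting $\beta_b:=\pi_b v_b$ and $u_a:=1-\U_{a,a-1}$ (with $u_0:=1$, consistent with $\U_{0,-1}=0$), and then $\alpha_a:=1/(\pi_a u_a)$, so that $u_a\alpha_a=1/\pi_a$ and the displayed identity becomes $\U_{a,b}=u_a\alpha_a\beta_b\DK_{b,a}$, which is the first line of \eref{eq:sgergf2}; the second line $\U_{a,a-1}=1-u_a$ is then just the definition of $u_a$. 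The two equalities $u_a\alpha_a=1/\pi_a$ and $\beta_b=\pi_b v_b$ are precisely condition $(iii)$.

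Then I would verify $(ii)$ and $(iv)$. For $(iv)$, I use the subdiagonal entry: $\U_{a,a-1}=\pi_{a-1}\L_{a-1,a}/\pi_a=\pi_{a-1}(1-v_{a-1})/\pi_a$, so that $\pi_a(1-u_a)=\pi_{a-1}(1-v_{a-1})$, which is $(iv)$ after the shift $a\mapsto a+1$. For $(ii)$, I use that row $a$ of $\U$ sums to one: $\sum_{b\geq a}\U_{a,b}=1-\U_{a,a-1}=u_a$; substituting the form $\U_{a,b}=u_a\alpha_a\beta_b\DK_{b,a}$ gives $u_a\alpha_a\sum_{b\geq a}\beta_b\DK_{b,a}=u_a$, i.e. $\alpha_a\sum_{b\geq a}\beta_b\DK_{b,a}=1$.

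Finally comes positivity $(i)$, which I expect to be the main obstacle. Here $\pi_a>0$ since $\pi$ is a positive measure, $\beta_b=\pi_b v_b>0$ for $b\geq 1$ while $\beta_0=\pi_0 v_0$ vanishes exactly when $v_0=0$, and $u_0=1$ by convention. The delicate requirement is $u_a\in(0,1)$ for $a\geq 1$, which is what makes $\alpha_a=1/(\pi_a u_a)$ a well‑defined positive number and the normalisation $(ii)$ non‑degenerate. The upper bound $u_a<1$ is immediate, since irreducibility of the \sut{} matrix $\U$ forces the down‑step $\U_{a,a-1}>0$. The lower bound $u_a>0$ (equivalently $\U_{a,a-1}<1$, i.e. $\sum_{b\geq a}\beta_b\DK_{b,a}>0$) is the subtle point: $u_a=0$ would mean that state $a$ receives no $\L$‑mass from the levels $\geq a$, a degeneracy I would exclude using the standing irreducibility convention that $v_a,u_a\in(0,1)$ for $a\geq 1$.
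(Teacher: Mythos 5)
Your construction is the same as the paper's own proof of this implication: read $\pi$ off \Cref{theo:dqti}, set $\beta_b=\pi_b v_b$, $u_a=1-\U_{a,a-1}$ (with $u_0=1$), $\alpha_a=1/(\pi_a u_a)$, get $(iii)$ by construction, $(iv)$ from the subdiagonal entry $\U_{a,a-1}=\pi_{a-1}(1-v_{a-1})/\pi_a$, and $(ii)$ from the row sums of $\U$. You even streamline the paper's argument slightly: the paper first writes $\U_{a,b}=u_a H_{a,b}$ for an ``ascent kernel'' $H$, matches zero patterns, and argues that the factor $g(a,b)$ multiplying $\DK_{b,a}$ separates as $\alpha_a\beta_b$, whereas you read the product form directly from $\U_{a,b}=\pi_b v_b\DK_{b,a}/\pi_a$. (The paper's proof also establishes the converse implication -- a tuple satisfying $(i)$--$(iv)$ yields a time-reversal -- but that is not part of the statement, so its omission is not a defect.) Your verifications of $(ii)$, $(iii)$, $(iv)$, of $\beta_0=0\Leftrightarrow v_0=0$, and of $u_a<1$ (irreducibility of $\U$ forces $\U_{a,a-1}>0$, since $-1$ steps are the only way down for a \sut{} chain) are all correct.

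The one real problem is the last step, $u_a>0$ for $a\geq 1$, and your resolution is circular: there is no ``standing irreducibility convention'' about $u$. The convention attached to the descent representation only gives $v_a\in(0,1)$ for $a\geq 1$; the claim $u_a\in(0,1)$ is precisely part of assertion $(i)$ that you are supposed to prove, and $\alpha_a=1/(\pi_a u_a)$ is not even defined until it is established. Moreover, it genuinely cannot be deduced from the stated hypotheses. Take $\DK_{b,a}=\1_{a=0}$ and $v_b=1/2$ for all $b$: then $\L$ (step up with probability $1/2$, reset to $0$ with probability $1/2$) is irreducible and positive recurrent with $\pi_b=2^{-b}$; its time-reversal $\U$ has $\U_{0,b}=2^{-b-1}$ and $\U_{a,a-1}=1$ for all $a\geq 1$, and is irreducible -- yet $u_a=0$ for every $a\geq 1$, so no tuple satisfying $(i)$ and $(ii)$ can exist. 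In fairness, the paper's proof has exactly the same hole: it sets $\alpha_a=1/(u_a\pi_a)$ and declares ``so that $(iii)$ and $(i)$ hold'' without ever justifying $u_a>0$. What is actually needed is the implicit non-degeneracy assumption that every state $a\geq 1$ can be entered by a descent (column $a$ of $\DK$ not identically zero), in which case $u_a=\pi_a^{-1}\sum_{b\geq a}\pi_b v_b\DK_{b,a}>0$ is immediate. So: same route as the paper, correct where the paper is correct, and sharing -- though at least explicitly flagging -- its one unproved claim; what you should not do is dress that claim up as a ``convention''.
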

\begin{proof} First, assume that $\pi,u,\alpha,\beta$ satisfy the properties stated in the theorem. By $(i)$ and $(ii)$, $\U$ is a transition matrix.
Let us check that  $\pi_b\L_{b,a}=\pi_a \U_{a,b}$ which is sufficient to conclude (by \Cref{theo:dqti}).\\
-- First, we have $\pi_a \L_{a,a+1}=\pi_a(1-v_a)= \pi_{a+1}(1-u_{a+1})=\pi_{a+1}\U_{a+1,a}$,\\
-- and for $b\geq a$, $\pi_b \L_{a,b}= \pi_b v_b D_{b,a}=\beta_b D_{b,a}$, while $\pi_a\U_{a,b}=\pi_au_a\, \alpha_a \beta_b\, \DK_{b,a}=\pi_bv_bD_{b,a}=\pi_b \L_{b,a}$ (by \eref{eq:gsh}).~\\
These points show that in all cases $\pi_a \U_{a,b}=\pi_b\L_{b,a}$.\\
Conversely, assume that $\U$ is a time-reversal of $\L$ (with representation $(v,D)$) for some positive measure $\pi$, that is, it satisfies $\pi_b\L_{b,a}=\pi_a \U_{a,b}$. Since $\U$ is \sut{}, it can be represented as $\U_{a,a-1}=1-u_a$ and $\U_{a,b}=u_a H_{a,b}$ for a sequence $u$ and $H$ such that $H_{a,b}>0\Rightarrow b\geq a$ (an ascent kernel). The sequence $u$ must satisfy
$\pi_b\L_{b,b+1}=\pi_b(1-v_b)=\pi_{b+1}\U_{b+1,b}=\pi_{b+1}(1-u_{b+1})$, so that $(iv)$ holds.

Let us show that $\U$ can be represented as stated in the Theorem. First, we must have $\L_{b,a}=0\equi \U_{a,b}=0$.
Since for all $b>0$, the factor $v_b>0$, $\L_{b,a}=0\equi D_{b,a}=0$ and since this must be equivalent to $\U_{a,b}=0$, it is easily seen that $\U_{a,b}=u_aH_{a,b}=u_a.\DK_{b,a}.g(a,b)$ for some positive function $g(a,b)$. This way of thinking extends to $b=0$, when $v_0>0$. If $v_0=0$, then $\L_{0,0}=0$, and since $\DK_{0,0}=1$ (because $\DK$ is a descending transition matrix), to satisfy $\U_{0,0}=u_0H_{0,0}=u_0.\DK_{0,0}.g(0,0)=0$ too, we will take $g(0,0)=0$ (in fact $\beta_0=0$ will be the needed specification).
Now, for all $b\geq a$ write
\ben\label{eq:fgr}
\pi_b v_b\DK_{b,a}=\pi_a \U_{a,b} \equi \U_{a,b}=\pi_b v_b\DK_{b,a}/\pi_a.
\een
and then if $b>0$, the variables in factor to $\DK_{b,a}$ are functions of separated variables $a$ or of $b$, so that $g(a,b)=\alpha_a\beta_b$ for some sequence $\alpha$ and $\beta$. Set $\alpha_a= {1}/({u_a\pi_a}),~~\beta_b=\pi_b v_b$, and for this choice, $\U_{a,b}=u_a\alpha_a\beta_b \DK_{b,a}$, so that $(iii)$ and $(i)$ hold. It remains to check $(ii)$. Since $\U$ is the time-reversal of $\L$, we get $\sum_{b} \pi_a \U_{a,b}=1$ and then $\sum_{b:b\geq a} \pi_a\U_{a,b}=\pi_au_a$ which implies that
$\sum_{b:b\geq a} \pi_au_a\alpha_a\beta_b \DK_{b,a}=\pi_a u_a$ and then  $\sum_{b:b\geq a} \alpha_a\beta_b \DK_{b,a}=1$.
\end{proof}

\subsubsection{Catalytic inversion of \slt{} transition matrices}

\label{sec:catal}
In this section, we introduce a tool allowing one to design many \slt{}-transition matrices $\L$ with a computable invariant measure (and computable time-reversal transition matrices $\U$). The weakness of this approach is that it is far more efficient when, instead of fixing a given $\L$ in terms of its descent representation $(v,\DK)$ only $(v_0,\DK)$ is fixed. By this method to find a complete descend kernel $(v,\DK)$ with a computable invariant measure amounts to finding a positive sequence $X$ satisfying some inequalities:
\begin{defi}\label{defi:push} Given a pair $(v_0,\DK)$ where $v_0\in[0,1)$ and $\DK$ a descent kernel. A sequence $X=(X_a,a\geq 0)$ is said to be $(v_0,\DK)$ pushable iff the two following conditions are satisfied:\\
  \bls $X_0=v_0$ and, for all $i\geq 1$, $X_i>0$ (so that $X_0=0$ is possible). \\
  \bls For all $a\geq 0$, $Y_a:=\sum_{b\geq a } X_b\DK_{b,a}$ is finite. For short, we will write $Y=X.\DK$.\\
  \bls For all $a\geq 1$, \ben\label{eq:srgr} \sum_{i=1}^a (Y_{i}/Y_0-X_{i})>0.
  \een
  \end{defi}

  \begin{theo}\label{theo:machin} Let $v_0\in[0,1)$, $\DK$ be a descent kernel, and $X$ be a  $(v_0,\DK)$ pushable sequence. Set $Y=X.\DK$, and
    \ben\label{eq:sfqdf}
    v_{a+1}=\frac{X_{a+1}}{Y_{a+1}/Y_0 +X_a(1/v_a-1)} \textrm{ for all }a\geq 0
    \een
(if $v_0=0$, take $v_1= \frac{X_1}{Y_0}(1-\frac{Y_1}{Y_0+Y_1})=\frac{X_1}{Y_0+Y_1}$ instead).
We have
\ben\label{eq:cd1}
v_a\in(0,1), \textrm{ for all }a>0,
\een
since this is equivalent to \eref{eq:srgr}. Define the  4-tuple $[u,\alpha,\beta,\pi]$ by 
  $\pi_0=Y_0$,   $u_0=1$ and for $a\geq 0$,
  \ben\label{eq:fqsdf}
  \bpar{ccl}\label{eq:uY} u_{a+1}&:=&\dis\frac{Y_{a+1}u_a}{Y_a(1-v_a)+Y_{a+1}u_a},\\
  \pi_{a+1}&:=&
  \dis\pi_a\frac{1-v_a}{1-u_{a+1}},\\
  \beta_a&:=&X_a,\\
  \alpha_a&:=&1/(u_a\pi_a)\epar
  \een
  then for this 4-tuple, the \slt{} $\L$ with representation $(v,\DK)$ has time-reversal $\U$ as defined in \Cref{pro:frsgr}, and then, both transition matrices $\L$ and $\U$ have $\pi$ as invariant measure.
\end{theo}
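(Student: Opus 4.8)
The plan is to reduce everything to \Cref{pro:frsgr}: I will show that the explicitly defined $4$-tuple $[u,\alpha,\beta,\pi]$ of \eref{eq:fqsdf} satisfies the four conditions $(i)$--$(iv)$ there, so that the matrix $\U$ built from it via \eref{eq:sgergf2} is automatically a time-reversal of the \slt{} matrix $\L$ with descent representation $(v,\DK)$; by \Cref{theo:dqti} this forces $\pi$ to be a common invariant measure of $\L$ and $\U$, which is the assertion. Three of the required conditions are essentially free: condition $(iv)$, $\pi_a(1-v_a)=\pi_{a+1}(1-u_{a+1})$, holds by the very definition of $\pi_{a+1}$ in \eref{eq:fqsdf}; the first half of $(iii)$, $u_a\alpha_a=1/\pi_a$, is the definition of $\alpha_a$; and condition $(ii)$, $\sum_{b\geq a}\alpha_a\beta_b\DK_{b,a}=1$, reduces to $\alpha_a Y_a=1$ once we know $\alpha_a=1/Y_a$, i.e. $u_a\pi_a=Y_a$. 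Hence the whole content lies in two facts, which I would prove together by induction on $a$: the range statement \eref{eq:cd1} (that $v_a\in(0,1)$), and the pair of algebraic identities $\pi_a=Y_a/u_a$ and $\beta_b=\pi_b v_b$ (the second half of $(iii)$, recalling $\beta_b=X_b$).

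First I would settle \eref{eq:cd1}. Writing $g_a:=X_a/v_a$ and using $g_a v_a=X_a$, the defining recursion \eref{eq:sfqdf} gives the clean first difference $g_a-g_{a-1}=Y_a/Y_0-X_{a-1}$ with $g_0=X_0/v_0=1$, which telescopes to an explicit partial sum of the quantities $Y_i/Y_0-X_i$. Since $v_a\in(0,1)$ is equivalent to $g_a>X_a>0$, this telescoping identifies the range condition with the partial-sum positivity appearing in \eref{eq:srgr}, which is precisely the pushability hypothesis of \Cref{defi:push}; so \eref{eq:cd1} holds iff $X$ is $(v_0,\DK)$-pushable. The degenerate start $v_0=0$ (where $\beta_0=0$, matching the exception allowed in \Cref{pro:frsgr}$(i)$) is covered by the separate formula for $v_1$ given in \eref{eq:sfqdf}. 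Once $v_a\in(0,1)$ is known, I simplify the recursions for $u$ and $\pi$: dividing the $u$-recursion shows $Y_{a+1}/u_{a+1}=Y_a(1-v_a)/u_a+Y_{a+1}$, which combined with the $\pi$-recursion yields simultaneously the clean identity $\pi_{a+1}=\pi_a(1-v_a)+Y_{a+1}$ and $\pi_a=Y_a/u_a$ by induction (base case $\pi_0=Y_0$, $u_0=1$). From $\pi_a=Y_a/u_a$ and $Y_a>0$ one reads off $u_a\in(0,1)$ and $\pi_a>0$, giving condition $(i)$ and the normalization $\alpha_a=1/Y_a$ required for $(ii)$.

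The main obstacle is the remaining identity $\beta_b=\pi_b v_b$, i.e. $X_b=\pi_b v_b$: this is the one place where the descent-flow interpretation of $X$ must be matched against the recursively generated $v$ and $\pi$. I would establish it by an induction running in parallel with the simplification of $v_a$ above, feeding the relation between $g_a=X_a/v_a$ and $\pi_a=Y_a/u_a$ produced by the two recursions. The delicate bookkeeping here is tracking the single global normalization constant (the role of $Y_0$ and of the choice $\pi_0=Y_0$): since invariance is only a class property in $\Mpq{\N}$, the reversal identities of \Cref{pro:frsgr} need only be matched up to that one overall scale, and one must check that the factors of $Y_0$ introduced by \eref{eq:sfqdf} cancel consistently between $(ii)$ and $(iii)$. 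With $\beta_b=\pi_b v_b$ in hand, condition $(iii)$ is complete, all of $(i)$--$(iv)$ hold, and \Cref{pro:frsgr} delivers that $\U$ is a time-reversal of $\L$, so $\pi$ is invariant for both. I expect verifying $\beta_b=\pi_b v_b$ while keeping the normalization consistent to be the genuinely fiddly step, whereas \eref{eq:cd1} is a clean telescoping once the substitution $g_a=X_a/v_a$ is made.
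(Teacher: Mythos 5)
Your skeleton coincides with the paper's own proof: reduce to \Cref{pro:frsgr}; note that $(iv)$ and the first half of $(iii)$ hold by construction of $\pi_{a+1}$ and $\alpha_a$; reduce $(ii)$ to $u_a\pi_a=Y_a$; prove $\pi_a=Y_a/u_a$ by induction from the $u$- and $\pi$-recursions; and identify \eref{eq:cd1} with pushability by telescoping (your $g_a=X_a/v_a$ is the paper's $\gamma_a=X_a(1/v_a-1)$ shifted by $X_a$). All of those steps are correct. The problem is that the one item you defer, the second half of $(iii)$, namely $\beta_b=X_b=\pi_bv_b$, is exactly where the proof has content, and you neither carry it out nor sketch an argument that could work. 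The paper disposes of it with a one-line induction: assuming $X_a=\pi_av_a$, the recursion \eref{eq:sfqdf} gives
\[
\frac{X_{a+1}}{v_{a+1}}=\frac{Y_{a+1}}{Y_0}+X_a\Big(\frac{1}{v_a}-1\Big)=u_{a+1}\pi_{a+1}+\pi_a(1-v_a)=\pi_{a+1},
\]
where the middle equality uses $u_{a+1}\pi_{a+1}=Y_{a+1}$ (from $\pi=Y/u$) together with the induction hypothesis, and the last one is the $\pi$-recursion in the form $(iv)$ plus $\pi_{a+1}=Y_{a+1}+\pi_a(1-v_a)$.

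The more serious flaw is your claim that the identities of \Cref{pro:frsgr} ``need only be matched up to one overall scale''. They cannot be: given $\U$, the representation $\U_{a,b}=u_a\alpha_a\beta_b\DK_{b,a}$, $\U_{a,a-1}=1-u_a$ determines $u$ and the products $\alpha_a\beta_b$, so the only freedom compatible with $(iii)$'s first half is $[\pi,u,\alpha,\beta]\mapsto[c\pi,u,\alpha/c,c\beta]$, and \emph{all four} conditions $(i)$--$(iv)$ are invariant under that rescaling. Hence if $(iii)$ fails for the tuple of \eref{eq:fqsdf} it fails for every rescaling of it, and no bookkeeping of constants can repair it. And if you run the induction honestly with the stated normalisation $\pi_0=Y_0$, the base case reads $X_0=\pi_0v_0=Y_0X_0$ and the recursions propagate $\pi_bv_b=Y_0X_b$, so $(iii)$ holds exactly when $Y_0=1$. (This is a point on which the paper itself is careless: its proof of $(iii)$ asserts ``$\pi_0=1$'', in conflict with the theorem's $\pi_0=Y_0$ and with $\pi_a=Y_a/u_a$ used for $(ii)$; the statement is literally correct under the implicit normalisation $Y_0=1$.) So your write-up has a genuine gap at the crux, and the route you propose for closing it, scale-matching, is a dead end; what is needed is the explicit induction above, carried out under a single consistent normalisation.
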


\begin{proof} Let us first say why \eref{eq:cd1} is equivalent to \eref{eq:srgr}. Set
  $\gamma_{a}= X_a (1/v_a-1)$.  From \eref{eq:sfqdf}, one gets that
  \ben\label{eq:gg} \gamma_{a+1}= \gamma_a+ Y_{a+1}/Y_0-X_{a+1},~~a\geq 0\een
and then $\gamma_{a+1}=\gamma_0+\sum_{i=1}^{a+1}(Y_i/Y_0-X_i)$. Since $v_a\in(0,1)\equi \gamma_a>0$, we get the result (note that $\gamma_1=Y_0+Y_1-X_1$ when $v_0=0$, so that \eref{eq:gg} holds for $\gamma_0=(Y_0+Y_1-X_1)-Y_1/Y_0+X_1=Y_0+Y_1-Y_1/Y_0$ in this case).

  It suffices to check that \Cref{pro:frsgr} applies for the tuple of sequences $[\pi,u,\alpha,\beta]$ as defined in \eref{eq:fqsdf}. 
  The condition $(i)$ is immediate, since we took $\beta_0=X_0=v_0$; the fact that $u_j\in(0,1)$ for all $j>0$ is clear.\\
  For $(ii)$ observe that the first equation of the system \eref{eq:uY} is equivalent to
\ben
\frac{1-v_a}{1-u_{a+1}}=\frac{Y_{a+1}/u_{a+1}}{Y_a/u_a}\een
so that $(Y_a/u_a)$ is proportional to $(\pi_a, a\geq 0)$ as defined in the third equation of the system \eref{eq:fqsdf}, and since $Y_0/u_0=\pi_0$ these sequences are equal. 
Write
  \[\sum_{b:b\geq a} \alpha_a\beta_b \DK_{b,a}=\frac{1}{u_a\pi_a}\sum_{b:b\geq a} X_b \DK_{b,a}=\frac{Y_a}{u_a\pi_a}=1. \]
 To obtain condition $(iii)$. Since $u_a\alpha_a= {1}/{\pi_a}$, we only need to prove that $\beta_b= X_b=\pi_b v_b$. Since $\pi_0=1$, $X_0=v_0$, the formula is true for $b=0$; let us assume that it holds for $b\leq a$, for some $a$, and let us establish that $X_{a+1}=\pi_{a+1}v_{a+1}$. From \eref{eq:sfqdf},
\be
\frac{X_{a+1}}{v_{a+1}}&=&\frac{u_{a+1}Y_{a+1}}{u_{a+1}Y_0} +X_a\l(\frac{1}{v_a}-1\r) = u_{a+1}\pi_{a+1}+\pi_a(1-v_a)=\pi_{a+1}
\ee by system \eref{eq:fqsdf}, second equation. [The case where $a=0$ and $v_0=0$ has to be treated separately: in this case $u_1= Y_1/(Y_0+Y_1)$ and since $Y_0=\pi_0$, $v_1=\frac{X_1}{Y_0}(1-Y_1/(Y_0+Y_1))$, we have $X_1/v_1= Y_0/(1-u_1)=\pi_0(1-v_0)/(1-u_1)=\pi_1$ so that $X_1/v_1$ is indeed equal to $\pi_1$\\
Finally $(iv)$ is immediate by the second equation of the system \eref{eq:fqsdf}.
\end{proof}

\subsubsection{``Catastrophe transition matrices'': analysis of \slt{} transition matrices with same descent kernel }
\label{sec:CK}
\Cref{theo:machin} gives a reformulation for the problem of finding the invariant distributions or the time-reversal of a given transition matrix $\L$, and as such it may appear a bit useless since no methods are provided to compute the pair $(X,Y)$ which is needed to conclude. The following examples show the power of this theorem: given the descent kernel $\DK$ and some parameter $v_0\in[0,1)$, it is quite easy to find many $(v_0,\DK)$ pushable sequence $X$. Even if it is still difficult to target a given sequence $v=(v_i,i\geq 0)$, it is possible to construct many sequences $v$ for which it is possible to construct the time-reversal of $(v,\DK)$.
  This allows observing the general form of integrable systems $(v,\DK)$.
The following results are comparable with those of  Pollett \& al. \cite{MR2340220}, Brockwell \& al. \cite{MR677553}, Kapodistria \& al \cite{MR3436782} (and references therein), in which various catastrophe transition matrices are investigated (in continuous-time). In these results also, it can be observed that very specific forms of catastrophe transition matrices are needed to find the invariant distributions, or absorption probabilities: each time it is a challenge to complete all computation details.

\paragraph{Geometric catastrophe}
This is the family of \slt{} transition matrices whose $(v_0,\DK)$ representation involved, for some $p\in(0,1)$, the descent kernel
\[\DK_{b,a}=p(1-p)^{b-a} +\1_{a=0}(1-p)^{b+1}, ~~ 0\leq a \leq b.\] 
For any positive sequence $X= (X_j,j\geq 0)$ and $a\geq 0$,
\be
Y_a&=&\sum_{b \geq a} X_b \DK_{b,a} 
=  p\sum_{x \geq 0} X_{a+x}(1-p)^x +\1_{a=0}\sum_{b\geq 0} X_b(1-p)^{b+1},
\ee
so that this  close formula can be effectively computed for many sequences $X$ (the $X_i's$ are the coefficients of a power series). It remains to extract the pushable sequences (those that satisfy $\sum_{i=1}^a (Y_{i}/Y_0-X_{i})>0$, for non-negative parameters and sequences $(v_0,X)$ with $X_0=v_0$, see \Cref{defi:push}).
From this, the complete description of the vectors $v$ and $u$ can be obtained as explained in \Cref{theo:machin}.

\paragraph{Binomial catastrophe}

The descent kernel $\DK$, in this case, is defined as follows
\[\DK_{b,a}=\binom{b}a p^{a} (1-p)^{b-a}, ~~ 0\leq a \leq b.\]
For $X=(e^{-\lambda}\lambda^b/b!,b\geq0)$ the  Poisson distribution $(P^{(\lambda)}_b,b\geq 0)$ with parameter $\lambda$, the corresponding $Y$ is Poisson distributed with parameter $\lambda p $, i.e $Y_a=P^{(\lambda p)}_a$. In this case $Y_i/Y_0=(p\lambda)^i/i!$ and then $\sum_{i=1}^a (Y_{i}/Y_0-X_{i})=\sum_{i=1}^a ((p\lambda)^i/i! - e^{-\lambda}\lambda^i/i!)=\sum_{i=1}^a \frac{\lambda^ie^{-\lambda}}{i!}(p^ie^\lambda-1) $ is indeed positive for $p$ such that $\lambda p\geq 1-e^{\lambda}$ at least, since $\sum_{i=1}^a \frac{\lambda^ie^{-\lambda}}{i!}(p^ie^\lambda)\geq \lambda p$ (the value taken for $a=1$) and  for each $a\geq 1$, $ \sum_{i=1}^a \frac{\lambda^ie^{-\lambda}}{i!}(-1)\geq\sum_{i=1}^{+\infty} \frac{\lambda^ie^{-\lambda}}{i!}(-1)=-(1-e^{-\lambda})$ ,  so that $(v_0,X)$ is pushable when $\lambda p\geq 1-e^{-\lambda}$.\\
For $v_0=X_0$, with $v_0\in(0,1)$, then 
\[v_{a+1}= \frac{e^{-\lambda}\lambda^{a+1}/(a+1)!}{e^{-\lambda p}(p\lambda)^{a+1}/(a+1)!/e^{-\lambda p}+e^{-\lambda}\lambda^a/a!(1/v_a-1)}=\frac{1}{ e^\lambda p^{a+1}+(1/v_a-1)(a+1)/\lambda}.\]
From this formula the sequence $(v_a,a\geq 0)$ is characterized.
From here, set $u_0=1$, and compute successively, for $a\geq 0$,
\[u_{a+1}= \frac{e^{-\lambda p}(\lambda p)^{a+1}/(a+1)! u_a}{e^{-\lambda p}(p\lambda)^{a}/a!(1-v_a)+e^{-\lambda p}(\lambda p)^{a+1}/(a+1)! u_a}=\frac{(\lambda p) u_a}{(1-v_a)(a+1)+(\lambda p) u_a}.\]
After that, the values of $\pi$ can be obtained from the third formula in the system \eqref{eq:uY}.

\paragraph{Uniform catastrophe}
The case of uniform catastrophe is described by the following descent kernel $\DK_{b,a}=1/(b+1)$ for $a\in\cro{0,b}$. The case where $X_b=(b+1) \rho_b$ for $\rho_b$ a probability measure with full support on $\mathbb{N}$ and a finite mean,  is integrable. Denote $\overbar{\rho}_b=\sum_{k\geq b}\rho_k$ (the tail distribution function). The computation of $Y$ gives $Y_a= \overbar{\rho}_a$ so that $Y_0=1$. The pushability condition is $\sum_{i=1}^a (\overbar{\rho}_i - (i+1)\rho_i)\geq 0$. 
For $v_0=X_0$, with $v_0\in(0,1)$, compute successively the $v_a$ using : 
\[v_{a+1}= \frac{(a+2) \rho_{a+1}}{\overbar{\rho}_{a+1} + (a+1) \rho_a}.\]
Set $u_0=1$, and compute the $u_a$ using:
\[u_{a+1}= \frac{\overbar{\rho}_{a+1}u_a}{\overbar{\rho}_{a}(1-v_a)+\overbar{\rho}_{a+1}u_a},\]
After that, the values of $\pi$ can be obtained from the third formula in the system \eqref{eq:uY}.

\section{Particular models}

\subsection{Back to the tridiagonal case}
\label{sec:sqshtqdf}\label{sec:cr}

\paragraph{About the formulas for invariant distributions.} In the tridiagonal  case, by \eref{eq:sgfqd}, the invariant distribution $(\pi_a,a\geq 0)$ associated with the tridiagonal transition matrix $\TTT$ is unique (since it is  \sut), and it is proportional to $(p_a^{(t)}:=\prod_{j=1}^{a} \frac{\TTT_{j-1,j}}{\TTT_{j,j-1}},a\geq 0)$, and to $(p_a^{(\U)}:=\frac{\det( \Id-\TTT_{[0,a-1]})}{\prod_{j=1}^{a}\TTT_{j,j-1}},a\geq 0)$ in Theorem \ref{theo:InvDistU} for the \sut{} case. Therefore, in the tridiagonal case, these two formulas must coincide.\par
The fact that $p_a^{(t)}=p_a^{(\U)}$ is a consequence of \Cref{theo:InvDistU} and  Remark \ref{rem:shh}$(ii)$ (and it is also a consequence of \eref{eq:tjt} and \Cref{pro:rytou}).\\
In the \slt{} case, it is a bit more complex since \Cref{theo:f978sdqs} only deals with finite \slt{} transition matrices and we need then to use \Cref{pro:ruygfs}. We then take $\L^{(n)}$ as described in this \Cref{pro:ruygfs}.
In the tridiagonal case, $\L^{(n)}_{i,j}=\TTT_{i,j}$ for all $i,j\leq n$, except for $\L^{(n)}_{n,n}=\TTT_{n,n}+\TTT_{n,n+1}$. 
For any $a<n$, we have, by Theorem \ref{theo:f978sdqs}
\[\rho_a^{(n)} = c_n \det( \Id-\L^{(n)}_{[a+1,n]})\prod_{i=1}^a \TTT_{i-1,i}.\]
The determinant,  by the matrix tree theorem, coincides with the weight of trees rooted at $a$, on the graph with vertex set $[a,n]$, and edge set $\{(i,j)~: \TTT_{i,j}>0,i,j\leq n\}$. Since the only decreasing edges are the $(j,j-1)$, there is a single tree on $[a,n]$ rooted at $a$, it is the tree with edges $\{(j,j-1), j \in \cro{a+1,n}\}$.
Hence
\[\rho_a^{(n)} = c_n \l(\prod_{i=a+1}^n \TTT_{i,i-1}\r)\l(\prod_{i=1}^a \TTT_{i-1,i}\r)\1_{a\leq n}=c'_n \prod_{i=1}^a \frac{\TTT_{i-1,i}}{\TTT_{i,i-1}}\1_{a\leq n}.\]
Set, as done in \Cref{pro:ruygfs},
\[\eta_a^{(n)}=\rho_a^{(n)}/\rho^{(n)}_0=\1_{a\leq n}\prod_{i=1}^a \frac{\TTT_{i-1,i}}{\TTT_{i,i-1}}.\]
It remains to prove that the three conditions $(a)$, $(b)$ and $(c)$ of the proposition are satisfied.\\
$(a)$ We need to take $S_a= \prod_{i=1}^a \frac{\TTT_{i-1,i}}{\TTT_{i,i-1}}$ (which is $p_a^{(t)}=p_a^{(\U)}$ by the way). For $b$ fixed, we have $\sum_{a:a\geq b+1}S_a\TTT_{a,b}=S_{b+1}\TTT_{b+1,b}$ and this is indeed $<+\infty$.\\
$(b)$ Let $\eta_a:=S_a=p_a^{(t)}=p_a^{(\U)}$. The convergence of $\eta_a^{(n)}\xrightarrow[n\to \infty]{}{} \eta_a$ is obvious.\\
$(c)$ For $a$ fixed, and $n>a+1$, we have $\sum_{j\geq n}\TTT_{j,a}=0$, so that $\eta^{(n)}_n \sum_{j\geq n}\TTT_{j,a}\to 0$ as $n\to\infty$, as required.

\paragraph{Recurrence criterion}
In the tridiagonal case, the criterion for recurrence is known to be \eref{eq:dqsdt} (that is $\sum_{k\geq 0}\prod_{j=1}^k \frac{\TTT_{j,j-1}}{\TTT_{j,j+1}}=+\infty$).
Let us show that it is equivalent to \Cref{theo:dfdqdq} in this case $\Big(\lim_{b=+\infty} u_1(b)=1$ with $u_1(b)=\M_{1,0} \frac{\det(\Id -\TTT_{[2,b-1]})}{\det(\Id-\TTT_{[1,b-1]})}\Big)$.
If $\TTT$ is tridiagonal
\[\det(\Id-\TTT_{[1,b-1]})=(1-\TTT_{1,1}) \det(\Id -\TTT_{[2,b-1]})-\TTT_{1,2}\TTT_{2,1} \det(\Id -\TTT_{[3,b-1]})\] and writing $D_{a,b}:=\det(\Id-\TTT_{[a,b-1]})$, we have more generally
\ben\label{eq:thhu}
D_{i,b-1}=(1-\TTT_{i,i}) D_{i+1,b-1}-\TTT_{i,i+1}\TTT_{i+1,i} D_{i+2,b-1},~~\textrm{ for }i+1\leq b-1.\een
Set 
\[Z_{i,b-1}=\frac{D_{i,b-1}}{D_{i+1,b-1}\,\TTT_{i,i-1}}\]
so that $u_1(b)=1/Z_{1,b-1}$.
Formula \eref{eq:thhu} rewrites
\ben\label{eq:relZ}
Z_{i,b-1} &= c_i+{a_{i+1}}\,/\,{Z_{i+1,b-1}} \textrm{ for } i\leq b-2
\een
for 
\ben \label{eq:qab}
q_i := \frac{\TTT_{i,i+1}}{\TTT_{i,i-1}},~~~a_{i+1}=-q_i,~~~c_i= 1+q_i.
\een
Notice that ``the last'' term, for $i=b-1$,
\[Z_{b-1,b-1}=\frac{D_{b-1,b-1}}{D_{b,b-1}\,\TTT_{b-1,b-2}}=\frac{1-\TTT_{b-1,b-1}}{\TTT_{b-1,b-2}}=\frac{\TTT_{b-1,b-2}+\TTT_{b-1,b}}{\TTT_{b-1,b-2}}=1+{q_{b-1}}=c_{b-1}\] so
that \eref{eq:relZ} can be used to express $Z_{1,b-1}$ in terms of the $a_i$'s and $c_i$'s as follows: if one sees the relation \eref{eq:relZ} as a kind of continued fraction expansion, what we want to do, is to produce a formula for the so-called convergents:
\ben\label{eq:1b-1h}
Z_{1,b-1}:=c_1+\cfrac{a_2}{c_2+\cfrac{a_3}{\ddots+\cfrac{\ddots}{c_{b-2}+\cfrac{a_{b-1}}{c_{b-1}}}}};\een
One can compute the value of the ``finite continuous fraction'' expressed in \eref{eq:1b-1h}: set  \[A_{0}=1, A_{1}=c_1=1+q_1,B_{0}=0,B_1=1\] and computing successively
\ben\left\{\begin{array}{ccl}
             A_k&=&c_kA_{k-1}+a_kA_{k-2}=(1+q_k)A_{k-1}-q_{k-1}A_{k-2},\\
             B_k&=&c_kB_{k-1}+a_kB_{k-2}=(1+q_k)B_{k-1}-q_{k-1}B_{k-2}
\end{array}\right.
\een
for $k$ from 2 to $b-1$, we get
\[Z_{1,b-1}= A_{b-1}\,/\,B_{b-1}.\]
We can proceed to the computation of $A_{b-1}$ and $B_{b-1}$, by observing that for any $k\leq b-1$, since $A_k=(1+q_k)A_{k-1}-q_{k-1}A_{k-2}$ and $B_k=(1+q_k)B_{k-1}-q_{k-1}B_{k-2}$, we then have for $k\leq b-1$~:
\be \alpha_k:=A_k -q_k A_{k-1} &=&A_{k-1}-q_{k-1}A_{k-2}=\alpha_{k-1},\\
\beta_k :=B_k -q_{k}B_{k-1} &=&B_{k-1}-q_{k-1}B_{k-2}=\beta_{k-1}.
\ee
From what we see that $(A_k -q_k A_{k-1},k<h)$ and $(B_k -q_{k}B_{k-1},k<h)$ are both constant, but are subjected to different initial conditions. For  $C_k=A_k$ or  $C_k=B_k$, with $c$ encoding the initial condition,
\ben
C_k&=&q_kC_{k-1}+c \imp C_k= q_k(q_{k-1}C_{k-2}+c)+c
= C_0.\prod_{j=1}^kq_j+c\sum_{j=1}^k  \prod_{i=j+1}^k q_i.
\een
One gets for $k\leq b-1$, setting $Q_k:=\prod_{j=1}^kq_j=Q_{k-1}q_k$, $F_k:=\sum_{j=1}^k  \l(\prod_{i=1}^j q_i\r)^{-1}$
\be
A_k&=& 
Q_k(1+F_k) ,
~~ B_k=Q_kF_k.
\ee
Hence,
\[Z_{1,b-1}= 1+1/F_{b-1}\]
and since $F_k:=\sum_{j=1}^k  \l(\prod_{i=1}^j q_i\r)^{-1}=\sum_{j=1}^k  \prod_{i=1}^j \frac{\TTT_{i,i-1}}{\TTT_{i,i+1}}$ 
and then one observes that the convergence of $Z_{1,b-1}$ to 1 is indeed equivalent to Karlin \& McGregor criterion \eref{eq:dqsdt}.

\paragraph{Positive recurrence criterion}
The positive recurrent criterion is $\sum_{k\geq 0}\prod_{j=1}^k \frac{\TTT_{j-1,j}}{\TTT_{j,j-1}}<\infty$, while it is $\sum_{a=1}^\infty \frac{\det\left( \Id-\U_{[0,a-1]}\right)}{\prod_{j=1}^{a}\U_{j,j-1}}<\infty$ in the $\U$ case. Section \ref{sec:sqshtqdf} explained why, in the tridiagonal case,  the formulae obtained in the \sut{} and tridiagonal cases coincides.

  \subsection{Non-uniqueness of invariant measures in the \slt{} case, and of main right eigenvectors in the \sut{} case: martingale connection}
\label{sec:teheesgs}

Let us first state a simple fact:
\begin{pro}\label{pro:qffthqeh}Let $(X_k,k \geq 0)$ be a Markov chain with irreducible \sut{} transition matrix $\U$, and let ${\cal F}=\l({\cal F}_n,n\geq 0\r)=(\sigma(X_i,i\leq n), n\geq 0)$ be the filtration generated by the $X_i$'s. Consider a function $R:\mathbb{N}\to [0,+\infty)$ (which we want to view also as a vector $R=\begin{bmatrix}R_{0},&R_1,&R_2,&\cdots\end{bmatrix}^{t}$). The process $(R_{X_k},k\geq 0)$ is ${\cal F}$-martingale iff $R$ is a \underline{right eigenvector} of $\U$ associated with the eigenvalue 1.
\end{pro}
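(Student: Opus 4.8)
The plan is to reduce the martingale property to the pointwise eigenvalue equation $\U R=R$ through a single application of the Markov property, and then to identify exactly which step carries the real content. For each $k\geq 0$, the $\mathcal{F}_k$-measurability of $X_k$ together with the Markov property gives
\[
\E\!\left[ R_{X_{k+1}} \mid \mathcal{F}_k \right] = \E\!\left[ R_{X_{k+1}} \mid X_k \right] = \sum_{j \geq 0} \U_{X_k, j}\, R_j = (\U R)_{X_k},
\]
so that $(R_{X_k},k\geq 0)$ is an $\mathcal{F}$-martingale if and only if $(\U R)_{X_k}=R_{X_k}$ almost surely for every $k$ (together with the integrability requirement discussed below). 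All of the substance of the proposition lies in passing between the almost sure identity $(\U R)_{X_k}=R_{X_k}$ and the pointwise identity $(\U R)_i=R_i$ for all $i$.

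For the ``if'' direction I would suppose that $R$ is a right eigenvector of $\U$ with eigenvalue $1$, i.e. $(\U R)_i=\sum_j \U_{i,j}R_j=R_i$ for every $i$. In particular the series $\sum_j \U_{i,j}R_j$ converges to the finite value $R_i$, so the one-step conditional expectation displayed above is finite and equals $R_{X_k}$, giving the martingale identity at once. Integrability then propagates along the chain: taking expectations yields $\E[R_{X_{k+1}}]=\E[R_{X_k}]$, hence $\E[R_{X_k}]=\E[R_{X_0}]$ for all $k$, which is finite for a deterministic (or integrable) initial state. Thus $(R_{X_k})$ is a genuine martingale.

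For the ``only if'' direction I would suppose that $(R_{X_k})$ is an $\mathcal{F}$-martingale, so that for every $k$ we have $(\U R)_{X_k}=R_{X_k}$ almost surely; equivalently, $(\U R)_i=R_i$ holds for every state $i$ in the support of the law of $X_k$. This is where irreducibility enters, and it is the main (though mild) obstacle: since $\U$ is irreducible on $\mathbb{N}$, every state $i$ is accessible from the initial state, so there exists a time $k$ with $\P(X_k=i)>0$, which places every $i$ in the support of some $X_k$. Consequently $(\U R)_i=R_i$ for all $i\in\mathbb{N}$, that is $\U R=R$, and $R$ is a right eigenvector associated with the eigenvalue $1$, completing the equivalence.
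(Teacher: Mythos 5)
Your proof is correct and takes essentially the same route as the paper's: apply the Markov property and time-homogeneity to reduce the martingale identity to the one-step condition $\sum_{j} \U_{i,j}R_j = R_i$ for all $i$. In fact you are more careful than the paper's two-line argument, which leaves implicit both the integrability bookkeeping and the use of irreducibility to pass from the almost-sure identity $(\U R)_{X_k}=R_{X_k}$ to the pointwise eigenvector equation at every state.
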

\begin{proof} Since $(X_k,k\geq 0)$ is a Markov chain, $(R_{X_k},k\geq 0)$ is a martingale iff
$\E ( R_{X_{k+1}} | {\cal F}_k) = \E ( R_{X_{k+1}} | X_k) =R_{X_k}$. Since $(X_k,k\geq 0)$ is time homogeneous and discrete, observe that $(R_{X_k},k\geq 0)$ is a martingale iff, for all $i\geq 0$,
$E (R _{Z_1} | Z_0=i )= R_i$, which is equivalent to
$\sum_{j \geq i-1} \U_{i,j} R_j=R_i$ as claimed.
  \end{proof}
  Since $\U$ is a transition matrix, $R=\begin{bmatrix} 1, &1, &1 \cdots\end{bmatrix}$ is a right eigenvector. It is also a consequence of this Proposition since $\begin{bmatrix}R_{X_i},i\geq 0\end{bmatrix}=\begin{bmatrix} 1, &1, &1\cdots\end{bmatrix}$ defines a martingale.
\begin{proof}[\textbf{Proof of  \Cref{pro:qfqeh2}}]
 In view of \Cref{pro:qffthqeh} and the invariance of the constant eigenvector $R=\begin{bmatrix} 1, &1, &1 \cdots\end{bmatrix}$ it suffices to prove the existence of a vector $R$ with positive non constant coordinates and an irreducible \sut{}  transition matrix $\U$ such that $\U R=R$.

 For that purpose, it suffices to take a positive sequence $(R_i,i\geq 0)$ which does not reach its infimum nor its supremum; for example, consider the bounded sequence $(R_k,k\geq 0)$ defined by
\[R_k = 1_{k\,{\sf mod }\, 2 =1}\l(1+\frac{1}{10 k}\r) + 1_{k\,{\sf mod}\, 2 = 0} \l(2-\frac1{10(k+1)}\r),~~\textrm{ for }k\geq 0.\]
For such a sequence, for any fixed $k$, $R_k$ is neither the maximum nor the minimum in the set $\{R_{j},j\geq k-1, j\neq k\}$: under this condition, $R_j$ is a barycentre of the others $(R_k, k \in\{j-1\} \cup[k+1,+\infty))$: there exists some non-negative $(\alpha_{j,\ell},\ell\geq j-1)$ summing up to 1, with $\alpha_{j,j-1}>0$,  and at least one $\alpha_{j,\ell}>0$ for $\ell >j$, such that
\[R_j= \sum_{\ell\geq j-1} \alpha_{j,\ell} R_{\ell}.\]
Taking $\U_{i,j}=\alpha_{i,j}$ for all $i,j$, one sees that $\U$ has right eigenvector $R$, and $\U$ is irreducible.
\end{proof}    

\begin{proof}[\textbf{Proof of \Cref{theo:hetjyfd}$(c)$} ]
Consider $\U$ a \sut{} irreducible transition matrix satisfying $(i)$, and assume that additionally to $R^{(0)}=[R^{(0)}_i,i\geq 0]$ with $R_i^{(0)}:=1$, there is a second bounded positive right eigenvector $R^{(1)}=:[R^{(1)}_i,i\geq 0]$: for $k \ in\{0,1\}$
\ben\label{eq:UR} \sum_{b} U_{a,b}R^{(k)}_{b} = R^{(k)}_a,~~a\geq 0.\een Since $\U$ is a \sut{} transition matrix it admits a unique invariant distribution $\pi$ (by \Cref{theo:InvDistU}). Now, set
for all $a,b\geq0$, $\L_{b,a}= \pi_a U_{a,b}/\pi_{b}$ the time-reversal of $\U$. 
Formula \eref{eq:UR} implies that
\[ \sum_{b}\pi_b R^{(k)}_{b}\,\L_{b,a}  = \pi_aR^{(k)}_a,~~a\geq 0\]
in other words, $\L$ possesses $\l[ \pi_aR^{(k)}_a,a\geq 0 \r]$ as invariant measures for $k=0$ and $k=1$, and these two measures are not proportional.
\end{proof}

\subsection{A class of integrable \lt{} transition matrices}
\label{eq:fqrfdqe} In Section \ref{sec:catal}, we described a strategy to design some \slt{} transition matrices $\L$ for which the time-reversal $\U$ can be computed (for some invariant measure $\pi$ computed simultaneously). Here, we present a large family of transition matrices for which the invariant distribution can be computed directly.\par
Denote by $\se_j=(\1_{i=j},i\geq 0)$ the vector with a 1 in entry $j$ only (the first vector is $\se_0$).
\begin{defi}\label{def:fqgrzd}
  A \slt{}-transition matrix is said to be Col(0)-triangular if for any $c\geq 1$,
\ben\label{eq:0simple}
\L_{\bullet,c}=\alpha_c \L_{\bullet,0}+ \sum_{\ell=0}^c a_{\ell,c}\, \se_\ell,
\een
where, $\L_{\bullet,c}$ denotes the column $c$ of $\L$, the first column being $\L_{\bullet,0}$.
 \end{defi}
The first column $\L_{\bullet,0}$ is general, and for any $c$,  $\L_{\bullet,c}$ is essentially proportional to $\L_{\bullet,0}$, except its $c+1$ first entries, indexed from 0 to $c$. Since $\L_{0,c}=\cdots=\L_{c-2,c}=0$, for $c\geq 1$, only the entries $\L_{c,c}$ and $\L_{c-1,c}$ are free. 

We claim that it is possible to solve the system $\eta = \eta \L$ when $\L$ is a \slt{}, irreducible and Col(0)-triangular. The idea is to keep on hold the first equation until the end of the resolution:
\ben\label{eq:qfg}
\eta_0 = \eta L_{\bullet,0}=\sum_k \eta_k L_{k,0}.
\een
Suppose that $\eta$ is the solution of \eref{eq:qfg} and $\eta =\eta \L$. For $c \geq 1$, we have
\ben\label{eq:qdthed1}
\eta_c=\eta \L_{\bullet,c}=\alpha_c\, \eta\,\L_{\bullet,0}+ \eta \sum_{\ell=0}^c a_{\ell,c}\, \se_\ell
= \alpha_c\, \eta_0+\sum_{\ell=0}^c a_{\ell,c}\, \eta_\ell
\een
and it is then apparent that $(\eta_c,c\geq 1)$ is solution of a standard triangular linear system in which $\eta_0$ is seen as a parameter. It remains to check if the obtained solution of \eref{eq:qdthed1} solves \eref{eq:qfg} or not (which corresponds to the case where a solution exists or none, respectively).

\noindent\textbf{Example:} Consider a sequence of vectors $V^{\geq k}=\begin{bmatrix} V_0 1_{0\geq k}\\V_1 1_{1\geq k}\\\vdots\end{bmatrix}$ (indexed by $k$) where, vertically, the entries are non increasing:
$1>V_0>V_1>V_2>\cdots > 0$.
The vectors $V^{\geq k}$ are essentially proportional, and $V:=V^{\geq 0}$ has the sequence $(V_i,i\geq 0)$ as entries.
The simplest case of Col(0)-triangular transition matrices are those in which the columns $\L_{\bullet,c}$  are essentially proportional. Consider $\alpha_0=1,\alpha_1,\alpha_2,\dots$ and the matrix $\L$ such that
\ben\label{eq:form}
\L_{\bullet,c}&=& \alpha_c V^{\geq c-1}= \alpha_c \Big[L_{\bullet,0} - \sum_{j=0}^{c-2} V_j\se_j\Big] \textrm{ for all } c\geq 0.
\een
Since  $\L$ is a \slt-transition matrix, the condition $\sum_{c} \L_{r,c}=1$ for all $r$ becomes:
\ben\label{eq:rhjjf}\l( \alpha_0+\cdots \alpha_{r+1}\r)V_{r}&=&1 ~~\textrm{ for }r\geq 0,
\een
which implies $\sum_{\ell=0}^{r+1} \alpha_\ell=1/V_r$, and since $1+\alpha_1=\alpha_0+\alpha_1= \frac{1}{V_0}$, we obtain
\ben\label{eq:fqqf}
\alpha_1= \frac{1}{V_0}-1, \textrm{ for }r>0, \alpha_{r+1}=\frac1{V_r}-\frac{1}{V_{r-1}}.\een
Since $(V_i,i\geq 0)$ is decreasing, it is easily seen that $r\mapsto \l(\sum_{i=1}^r \alpha_i\r)$ is increasing so that  all the $\alpha_i$ are positive. 
We have
\ben\label{eq:ffgdq}
\eta_k = \eta \L_{\bullet,k}=\eta_0\alpha_k(1-V_0\cdots-V_{k-2}), ~~~ k \geq 0\een
we then need that $\sum_{i=0}^{+\infty} V_i\leq 1$ and
\[\eta_0=\sum_k \eta_k L_{k,0}=\sum_k \eta_k V_k<+\infty\]
which is equivalent to
\ben\label{eq:rhjjf2}
1&=&\sum_k \alpha_k V_k(1-V_0\cdots-V_{k-2}).
\een
Using \eref{eq:fqqf}, it can be written as 
\ben\label{eq:VV}
V_0+ V_1\l(\frac{1}{V_0}-1\r)+\sum_{k\geq 2} \l(\frac{1}{V_{k-1}}-\frac{1}{V_{k-2}}\r) V_k(1-V_0\cdots-V_{k-2})&=&1.
\een 
If  $\L$ satisfies \eref{eq:form} and \eref{eq:VV}, then its invariant measures $\eta$ can be computed thanks to \eref{eq:ffgdq}.

\subsection{Some \slt{} and \sut{} transition matrices associated with BDP Markov chains}
\label{sec:hit}

Let $\TTT$ be an irreducible tridiagonal transition matrix. As explained at the beginning of Section \ref{sec:Intro}, the invariant measure of $\TTT$, and criterion of recurrence and positive recurrence are available.
 Now define two associated transition matrices $\U$ and $\L$ as follows.
 Let $X$ be a Markov process with transition matrix $\TTT$. The increasing steps of $X$ are $+1$ while decreasing steps are $-1$.
 Now, define
 \ben
 \bpar{ccl}
\U_{a,b}&=&\P\l(X_\tau^{\downarrow}=b ~|~X_0=a\r),\\ 
\L_{a,b}&=&\P\l(X_\tau^{\uparrow}=b ~|~X_0=a\r)\epar
\een
where
\be
\tau^{\downarrow}&=&\inf\{t>0~:~X_{t}=X_{t-1}-1\},\\
\tau^{\uparrow} &=&\inf\{t>0~:~X_{t}=X_{t-1}+1\}.
\ee
In words, if one observes $X$ only at the times $(t_j,j\in \Z)$ following a decreasing step, then the sequence of observations is a $\U$-Markov process on $\{0,1,2,3,\cdots\}$. If one observes $X$ only at the times $(t_j,j\in \Z)$ following an increasing step is a $\L$-Markov process on $\{1,2,3,\cdots\}$.

The transition matrices $\U$ and $\L$ can be computed using path decompositions:
Set $\Loop_i^{\TTT}:=\l(1-\TTT_{i,i}\r)^{-1}$.
For all $i\geq 0$, we have $\U_{i,i-1}= \Loop_i^{\TTT}\, \TTT_{i,i-1}$, and for $j\geq i$,
\[ \U_{i,j}= \l[\prod_{b=i}^{j} \Loop_b^{\TTT}\,\TTT_{b,b+1}\r]\,\Loop_{j+1}^{\TTT}\, \TTT_{j+1,j}.\]
We have, for $i\geq 1$, $\L_{i,i+1}= \Loop^{\TTT}_i\, \TTT_{i,i+1}$, for $1\leq j\leq i$,
\[\L_{i,j}=\l[\prod_{b=j}^i \Loop^{\TTT}_b \TTT_{b,b-1}\r]\, \Loop_{j-1}^{\TTT}\, \TTT_{j-1,j}.\]

\begin{rem} The matrices $\U$ (respectively $\L$) are transition matrices when $\TTT$ is recurrent, since, in this case, with probability 1, starting from any position $i$, a Markov chain $X$ with transition matrix $\TTT$, will have some decreasing steps (resp. increasing steps) which ensure that, for any $i$, $\sum_j \U_{i,j}=1$ (resp. $\sum_j \L_{i,j}=1$). It is however possible for a BDP to have globally a.s. a finite number of steps $-1$ in the transient case, so that $\U$ is not always a transition matrix. Since starting from any point $i$, a Markov chain with transition matrix $\TTT$ will have a $+1$ step with probability 1 (by irreducibility), $\L$ is well defined even when $\TTT$ is transient.
\end{rem}

Set 
\ben\label{eq:muUL}
\bpar{ccl}\pi^{\U}_a&=&\pi^{\TTT}_{a+1}\,\TTT_{a+1,a},~~~a\geq 0,\\
\pi^{\L}_a&=&\pi^{\TTT}_{a-1}\,\TTT_{a-1,a},~~~ a\geq 1\epar.
\een
\begin{pro}\label{pro:drhqd}
  Assume that $\TTT$ is irreducible.
  \bir
  \itr  $\TTT$ is recurrent (resp. positive recurrent) iff $\L$ is irreducible and recurrent (resp. positive recurrent).  If $\L$ is a transition matrix then, $\L$ admits $\pi^\L$ as invariant measure (in all cases, including $\L$  transient).
  \itr If $\TTT$ is recurrent and $\U$ is a transition matrix, then $\U$ is irreducible and recurrent. $\TTT$ is positive recurrent iff $\U$ is positive recurrent. 
  The measure $\pi^\U$ is invariant by $\U$ (in all cases, including $\U$  transient, and even, when $\U$ is not a transition matrix!).
\eir
\end{pro}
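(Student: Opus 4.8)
The plan is to read $\U$ and $\L$ as the transition matrices of $X$ \emph{watched} at its crossing times, and to extract every assertion from this identification together with the reversibility of $\TTT$. Applying the strong Markov property at $\tau^{\downarrow}$ (resp. $\tau^{\uparrow}$) shows that the successive observations form a Markov chain whose one-step law is exactly $\U$ (resp. $\L$): the path decomposition quoted before the statement — a geometric number of holding loops at each visited level (each contributing a factor $\Loop_b^{\TTT}$), a monotone run of $\pm1$ steps, and the terminating crossing — is precisely what produces the displayed formulas. Irreducibility is then immediate, since $\TTT$ irreducible tridiagonal means $\TTT_{b,b+1}>0$ and $\TTT_{b,b-1}>0$ for all admissible $b$, so every entry permitted by the \slt{} (resp. \sut{}) shape is strictly positive; thus $\L$ is irreducible on $\{1,2,\dots\}$ and $\U$ on $\N$. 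For $\U$ I would first record that recurrence of $\TTT$ forces $\tau^{\downarrow}<\infty$ a.s. from every state (to reach $a-1$ for the first time the chain must take a $-1$ step out of $a$), so $\U$ is genuinely a transition matrix in that case; as the preceding remark notes, $\L$ is always one.

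For the invariant measures I argue through crossing rates. By reversibility, $\pi^{\TTT}_a\TTT_{a,a+1}=\pi^{\TTT}_{a+1}\TTT_{a+1,a}$, so the measures of \eqref{eq:muUL} read $\pi^{\U}_a=\pi^{\TTT}_a\TTT_{a,a+1}$ and $\pi^{\L}_a=\pi^{\TTT}_a\TTT_{a,a-1}$, i.e. the long-run up- and down-crossing frequencies of the edges incident to $a$; this is the heuristic reason they are stationary for the watched chains. For $\U$ the verification is purely algebraic and unconditional: because $\U$ is \sut{}, the equation $\sum_a\pi^{\U}_a\U_{a,b}=\pi^{\U}_b$ is a \emph{finite} sum over $a\le b+1$, and substituting the explicit $\U_{a,j}$ together with the edge-flow identity makes it telescope — this holds regardless of whether the rows of $\U$ sum to one, which is exactly why $\pi^{\U}$ stays invariant even when $\U$ is not a transition matrix. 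For $\L$ the analogous identity $\pi^{\L}_b=\sum_{a\ge b-1}\pi^{\L}_a\L_{a,b}$ is an infinite sum, so I would either carry out the same edge-flow bookkeeping after checking absolute convergence (granted when $\L$ is a transition matrix), or apply \Cref{theo:f978sdqs} to the truncations $\L^{(n)}$ and pass to the limit through \Cref{pro:ruygfs}.

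Recurrence I would obtain from the watched-chain picture. The chain $\L$ sits at state $1$ precisely after an up-step $0\to1$, and every entrance of $X$ into $0$ is a.s. followed by such an up-step (since $\TTT_{0,1}>0$); hence $\L$ visits $1$ infinitely often iff $X$ visits $0$ infinitely often, i.e. iff $\TTT$ is recurrent. Symmetrically $\U$ sits at $0$ exactly after a down-step $1\to0$, and $X$ enters $0$ only through such a step, so $\U$ is recurrent iff $\TTT$ is. By irreducibility this settles the recurrence equivalences in $(i)$ and the recurrence half of $(ii)$. As a cross-check one can instead feed $\L_{j,j+1}=\Loop_j^{\TTT}\TTT_{j,j+1}$ and $\det(\Id-\L_{[1,b-1]})$ into the criterion of \Cref{theo:rec} and reduce it to the Karlin--McGregor condition \eqref{eq:dqsdt}; the two routes agree.

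The main obstacle is positive recurrence. Having shown $\L$ (resp. $\U$) irreducible and recurrent, I would use that an irreducible recurrent chain is positive recurrent iff its invariant measure has finite total mass (for $\U$ this is literally the criterion of \Cref{theo:InvDistU}). The forward implication is painless: $\pi^{\L}_a=\pi^{\TTT}_{a-1}\TTT_{a-1,a}\le\pi^{\TTT}_{a-1}$ gives $\sum_a\pi^{\L}_a\le\sum_a\pi^{\TTT}_a$, and likewise $\sum_a\pi^{\U}_a\le\sum_a\pi^{\TTT}_a$, so positive recurrence of $\TTT$ descends to both $\U$ and $\L$. The delicate direction is the converse; I would prove it by an excursion/return-time decomposition, writing the mean return time of $X$ to $0$ in terms of the number of crossings performed during one excursion (which is governed by the total mass of $\pi^{\U}$, resp. $\pi^{\L}$) together with the holding contributions, and controlling the latter through the $\Loop_b^{\TTT}$ factors. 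This holding-time comparison is where the real work lies; once it is in place, positive recurrence of $\TTT$, of $\U$, and of $\L$ are equivalent, completing $(i)$ and $(ii)$.
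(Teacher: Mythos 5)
Your identification of $\U$ and $\L$ as the chains obtained by watching $X$ at its crossing times, the irreducibility observation, and the recurrence argument (recurrence of $\TTT$ is equivalent to every edge $(a,a+1)$, $(a,a-1)$ being crossed infinitely often, which is exactly what the watched chains record) coincide with the paper's proof. Your verification that $\pi^\U$ is invariant is also in substance the paper's: substituting the explicit formulas, the paper uncovers the identity $\pi^\U_a\U_{a,b}=\pi^\U_b\L_{b+1,a+1}$, so that $\sum_{a\le b+1}\pi^\U_a\U_{a,b}=\pi^\U_b\sum_{j\le b+2}\L_{b+1,j}$; the sum closes not by telescoping alone but because the rows of $\L$ always sum to one (a harmless cosmetic difference from your description). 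One local imprecision: for $\pi^\L$ you say the bookkeeping goes through ``after checking absolute convergence (granted when $\L$ is a transition matrix)''. Convergence is not the issue, since the terms are non-negative; the dual substitution shows that $\sum_{a\ge b-1}\pi^\L_a\L_{a,b}$ equals $\pi^\L_b$ multiplied by a row sum of $\U$, so the hypothesis actually needed is that \emph{$\U$} be a transition matrix --- which can fail when $\TTT$ is transient, whereas $\L$ is always a transition matrix. This is precisely the condition under which the paper's computation concludes.

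The genuine gap is the converse half of the positive-recurrence equivalence, which you defer to a holding-time comparison ``where the real work lies''. That step cannot be completed, because the implication ``$\U$ (or $\L$) positive recurrent $\imp\TTT$ positive recurrent'' is false without further hypotheses: the watched chains see only crossing steps, while the holding factors $\Loop^\TTT_a$ are invisible to them yet dominate the return times of $\TTT$ when $\TTT_{a,a}\to1$ quickly. Concretely, take $\TTT_{0,0}=\TTT_{0,1}=1/2$ and, for $a\ge1$, $\TTT_{a,a+1}=2^{-a-1}$, $\TTT_{a,a-1}=2^{-a}$, $\TTT_{a,a}=1-3\cdot2^{-a-1}$. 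Then $\pi^\TTT_a=1$ for all $a$, and $\TTT$ is recurrent by \eref{eq:dqsdt}, hence null recurrent. However $\Loop^\TTT_a\TTT_{a,a-1}=2/3$ and $\Loop^\TTT_a\TTT_{a,a+1}=1/3$ for $a\ge1$, so $\U_{a,a-1}=2/3$ and $\U_{a,a+k}=\tfrac23\,3^{-(k+1)}$ for $k\ge0$: this $\U$ is an irreducible transition matrix, it is recurrent, and its invariant measure $\pi^\U_a=\pi^\TTT_{a+1}\TTT_{a+1,a}=2^{-a-1}$ is summable, so $\U$ is positive recurrent (an irreducible chain admitting a stationary probability distribution is positive recurrent); the analogous computation shows the same for $\L$. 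Hence no excursion decomposition can deliver the converse unless one adds a hypothesis controlling the holding times, e.g. $\sup_a\TTT_{a,a}<1$. You should also know that the paper's own proof establishes only the forward implication (the proportion-of-time/ergodic-theorem argument) and is silent on the converse; your sketch stalls exactly at the point where the stated equivalence, as written, breaks down.
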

\begin{proof} In the proof, we treat simultaneously $(i)$ and $(ii)$.
First, the fact that the recurrence of $\TTT$ is equivalent to irreducibility and recurrence of $\U$ (resp. of $\L$) is clear. By irreducibility of $\TTT$, the recurrence of $\TTT$ implies that each edge $(a,a+1)$ and $(a,a-1)$ are traversed infinitely often by a Markov chain with transition matrix $\TTT$ so that $\U$ and $\L$ are recurrent. The converse use the same type of argument. \par
  If $\TTT$ is positive recurrent, then by the ergodic theorem, the proportion of time passed at $a$ by a Markov chain with transition matrix $\TTT$ converges to $\pi^\TTT_{a}$, and then, the proportion of time passed at an increasing step $(a,a+1)$ is $\pi^\TTT_a \TTT_{a,a+1}$,  and  the proportion of time passed at a decreasing step $(a,a-1)$ is $\pi^\TTT_a \TTT_{a,a-1}$. A simple consequence of that is that \eref{eq:muUL} holds in the positive recurrent case (since $\sum_{a} \pi_{a+1}^\TTT$ converges, $\sum_{a} \pi_{a+1}^\TTT \TTT_{a+1,a}$  and $\sum_{a} \pi_{a+1}^\TTT \TTT_{a,a+1}$ converges too).\par
Now, let us check the statements concerning the invariant measures.  

By \eref{eq:sgfqd}, 
\[
  \pi_a^\U = \pi_{a+1}^\TTT \TTT_{a+1,a} = \frac{\prod_{j=1}^{a+1}\TTT_{j-1,j}}{\prod_{j=1}^{a} \TTT_{j,j-1}}.
  \]
  We want to prove that $\pi^\U$ is invariant by $\U$. 
\begin{align*}
\pi_a^\U\U_{a,b} 
&= \frac{\TTT_{0,1} \dots \TTT_{a,a+1}}{\TTT_{1,0}\dots \TTT_{a,a-1}} \TTT_{b+1,b} \l(\prod_{i=a}^{b} \TTT_{i,i+1}\r)\l( \prod_{i=a}^{b+1} \Loop_i^\TTT\r)\\
&= \frac{\TTT_{0,1} \dots \TTT_{b,b+1}}{\TTT_{1,0}\dots \TTT_{b,b-1}}\l[ \TTT_{a,a+1}\Loop_a^\TTT \l(\prod_{i=a+1}^{b+1}\TTT_{i,i-1} \Loop_i^\TTT\r)\r]= \pi_b^\U \L_{b+1,a+1}
\end{align*}
since the last bracket is $\L_{b+1,a+1}$; this allows to conclude to the invariance of $\pi^\U$ by $\U$:
\[
	\sum_{a:a\leq b+1} \pi_a^\U\U_{a,b} = \pi_b^\U \sum_{a:a\leq b+1}\L_{b+1,a+1}=\pi_b^\U \sum_{j:j\leq b+2}\L_{b+1,j}=\pi_b^\U.  
  \]
  Now, we want to prove that $\pi^\L$ is invariant by $\L$. Write 
  $\pi_a^\L = \pi_{a-1}^\TTT \TTT_{a-1,a} = \frac{\prod_{j=1}^{a-1}\TTT_{j-1,j}}{\prod_{j=1}^{a-1} \TTT_{j,j-1}} \TTT_{a-1,a} =\frac{\prod_{j=1}^{a}\TTT_{j-1,j}}{\prod_{j=1}^{a-1} \TTT_{j,j-1}}$, so that
  
\begin{align*}
\pi_a^\L\L_{a,b} 
  &=\frac{\prod_{j=1}^{a}\TTT_{j-1,j}}{\prod_{j=1}^{a-1} \TTT_{j,j-1}}
\l[\prod_{k=b}^a \Loop^{\TTT}_k \TTT_{k,k-1}\r]\, \Loop_{b-1}^{\TTT}\, \TTT_{b-1,b}  \\
&=\l[\frac{\prod_{j=1}^{b-1}\TTT_{j-1,j}}{\prod_{j=1}^{b-1} \TTT_{j,j-1}}\TTT_{b-1,b} \r] \M_{a,a-1}
\l[\prod_{k=b}^a \TTT_{k-1,k}\Loop^{\TTT}_k \r]\, \Loop_{b-1}^{\TTT}\,\\
  &=\pi^\L_{b-1}
\l[\prod_{k=b-1}^{a-1} \Loop^{\TTT}_k \TTT_{k,k+1} \r] \l(\TTT_{a,a-1}\Loop^{\TTT}_a\r)=\pi^\L_{b-1} \U_{b-1,a-1}. 
\end{align*}
From here, $\sum_{a} \pi_a^{\L}\L_{a,b}= \sum_{a} \pi_{b-1}^\L \U_{b-1},a=\pi^\L_{b-1}$ if $\U$ is a transition matrix.
  \end{proof}

\begin{rem} This example is one of the simplest  \sut{} and \slt{} transition matrices one can construct using stopping times of a BD Markov chain. One can construct many other transition matrices by designing other stopping times: for example, for a \sut{} transition matrix: starting at $k$, starts the trajectory when it hits $k-1$, or at the first time where the 5 last steps are $(+1,+0,+1,-1,+1)$.  
  \end{rem}

  \subsection{Repair shop Markov chain}
  \label{sec:RPMC}
 
  The following is a well known Markov chain that is present in different textbooks. It is an integrable system, where recurrence, positive recurrence and transience have been characterized. Most of the results about this chain can be found in \cite{B13} under the tag {\it repair shop}. Nevertheless, the methods that we will use to obtain the same results are based on \Cref{theo:InvDistU} and are therefore of different nature.\par
     The repair shop chain is defined as the Markov chain $X_n$ given by:
  \[
  	X_{n+1} = (X_n-1)_+ +Z_{n+1}
  \]
  where $(Z_n,n\geq 0)$ is a sequence of i.i.d. random variables with distribution $(a_k,k\geq 0)$, meaning that $\P(Z_n = k)= a_k$ for every $k\geq 0$.
  This chain models the number of broken machines in a repair shop, where each day one broken machine is repaired (when there is at least one available to repair), and where the number of new machines that need to be repaired day $n+1$ is $Z_{n+1}$.  The transition matrix ${\sf A}$ associated with this chain is
 \[{\sf A}=\begin{bmatrix}
     a_0 & a_1 & a_2 & a_3 & \cdots \\
     a_0 & a_1 & a_2 & a_3 & \cdots \\
     0   & a_0 & a_1 & a_2 &  \cdots \\
     0   & 0   & a_0 & a_1 &  \cdots \\
     \vdots &  \vdots   &   \vdots &  \vdots &  \vdots 
   \end{bmatrix}.\] 
\begin{note} Some generalizations of the repair shop Markov chain appear in the literature, notably, in relation with queueing theory; see e.g. Abolnikov \& Dukhovny \cite{MR1136432} and references therein.
\end{note}
 
\subsubsection{Positive recurrence criterion}

Set $m:=\sum_s sa_s$ the mean of $Z_1$, i.e. of the distribution $(a_0,a_1,\cdots)$. 
\begin{pro}The transition matrix ${\sf A}$ is positive recurrent if and only if $m<1$.
\end{pro}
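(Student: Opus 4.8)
The plan is to exploit that ${\sf A}$ is an irreducible \sut{} transition matrix (its only nonzero subdiagonal entries are ${\sf A}_{i,i-1}=a_0$, and irreducibility forces $a_0>0$ together with $\sum_{k\ge 2}a_k>0$), so that \Cref{theo:InvDistU} applies directly: ${\sf A}$ has a unique positive invariant measure $(\pi_a,a\ge 0)$, and it is positive recurrent if and only if $\sum_{a\ge 0}\pi_a<+\infty$. The whole proof then reduces to deciding the finiteness of this sum, which I would do by encoding $\pi$ into a generating function rather than going through the explicit determinant formula.

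First I would write the invariance equations $\pi=\pi{\sf A}$ explicitly. Using ${\sf A}_{0,a}=a_a$ and ${\sf A}_{b,a}=a_{a-b+1}$ for $1\le b\le a+1$, they read $\pi_a=\pi_0 a_a+\sum_{b=1}^{a+1}\pi_b a_{a-b+1}$. Introducing the generating functions $\Pi(z)=\sum_{a\ge0}\pi_a z^a$ and $A(z)=\sum_{k\ge0}a_k z^k$ (so $A(1)=1$ and $A'(1)=m$), multiplying by $z^a$ and summing gives, after the change of index $k=a-b+1$, the relation $\Pi(z)=\pi_0 A(z)+\tfrac{A(z)}{z}\bigl(\Pi(z)-\pi_0\bigr)$, i.e. the Pollaczek--Khinchine-type identity
\[\Pi(z)=\pi_0\,A(z)\,\frac{z-1}{z-A(z)}.\]
Since by \Cref{rem:shh}$(ii)$ the invariance system is triangular with a unique solution once $\pi_0$ is fixed, $\Pi$ is genuinely the generating function of $\pi$ and this identity pins it down.

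Next, because $\pi_a\ge0$, Abel's theorem gives $\sum_a\pi_a=\lim_{z\uparrow1}\Pi(z)\in[0,+\infty]$, so it suffices to analyse the right-hand side near $z=1$. Here I would use that $A$ is increasing and strictly convex on $[0,1]$ with $A(1)=1$ and $A'(1)=m$, so that $g(z):=z-A(z)$ is strictly concave with $g(0)=-a_0<0$, $g(1)=0$ and $g'(1)=1-m$. If $m<1$ then $g'>0$ on $[0,1]$, hence $g<0$ on $[0,1)$, the quotient is regular, and $A(z)=1+m(z-1)+o(z-1)$ yields $\tfrac{z-1}{z-A(z)}\to\tfrac{1}{1-m}$, so $\sum_a\pi_a=\tfrac{\pi_0}{1-m}<\infty$ and ${\sf A}$ is positive recurrent. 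If $m>1$ then $g'(1)<0$ together with $g(0)<0$ and concavity forces a zero $z^{\ast}\in(0,1)$ of $g$, at which the numerator $\pi_0 A(z)(z-1)$ equals $\pi_0 z^{\ast}(z^{\ast}-1)\ne0$; thus $\Pi$ has a pole in $(0,1)$, its radius of convergence is $<1$, and since its coefficients are non-negative Pringsheim's theorem makes $\sum_a\pi_a=\Pi(1)=+\infty$. If $m=1$ then $g<0$ on $[0,1)$ but $z-A(z)\sim-\tfrac12 A''(1)(z-1)^2$, so $\tfrac{z-1}{z-A(z)}\to+\infty$ and again $\sum_a\pi_a=+\infty$.

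The main obstacle is the direction $m\ge1$: the invariant measure still exists by \Cref{theo:InvDistU} but fails to be summable, so one cannot conclude purely formally and must control the behaviour of $\Pi$ as $z\uparrow1$. This is where the sign and convexity analysis of $g(z)=z-A(z)$ and the Pringsheim/Abel arguments do the real work; in particular one must invoke irreducibility to guarantee $a_0>0$ and $\sum_{k\ge2}a_k>0$, hence $A''(1)>0$ and strict concavity of $g$, which rules out the degenerate affine $A$ (a chain confined to $\{0,1\}$) and legitimises the $m=1$ expansion.
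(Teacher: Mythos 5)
Your proof is correct, but it reaches the generating function by a genuinely different route than the paper, and its endgame is more careful. The paper also reduces everything to the summability criterion of \Cref{theo:InvDistU}, but it then evaluates the explicit determinant formula: it expands $\det(\Id-{\sf A}_{[0,N]})/\prod_{j=1}^{N+1}{\sf A}_{j,j-1}$ via \Cref{lem:etyyu} into a sum over increasing sequences, packages the increments into two power series $G_f$ and $G$, and computes $\sum_N C_N=\lim_{x\to 1}G_f(x)/(1-G(x))=m/(1-m)$ by L'H\^opital. You bypass determinants entirely: using the triangularity of the invariance system (\Cref{rem:shh}$(ii)$) you solve $\pi=\pi{\sf A}$ in closed form as the Pollaczek--Khinchine identity $\Pi(z)=\pi_0 A(z)(z-1)/(z-A(z))$. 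In fact the two computations meet: one checks that $G_f(x)/(1-G(x))=\bigl(A(x)-1\bigr)/\bigl(x-A(x)\bigr)=\bigl(\Pi(x)-\pi_0\bigr)/(\pi_0 x)$, so the paper's Lemma \ref{lem:etyyu} expansion is exactly a determinantal derivation of your queueing formula. What your route buys is rigor in the divergent cases: the paper's single L'H\^opital evaluation does not by itself show that $\sum_a\pi_a=+\infty$ when $m\geq 1$ (indeed $m/(1-m)<0$ for $m>1$, signalling a pole of the generating function inside $(0,1)$, which is precisely your $z^{\ast}$), whereas your sign/concavity analysis of $g(z)=z-A(z)$, the pole argument for $m>1$, and the Abel/monotone-convergence limit handle all three cases cleanly. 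One small repair: in the case $m=1$ your expansion $z-A(z)\sim-\tfrac12 A''(1)(z-1)^2$ presumes $A''(1)<+\infty$; this is not guaranteed, but it is also not needed, since $A'(1)=1$ already gives $z-A(z)=o(z-1)$ (by dominated convergence, as $\sum_k k a_k<\infty$), and with $g<0$ on $[0,1)$ this yields $(z-1)/(z-A(z))\to+\infty$ and hence $\sum_a\pi_a=+\infty$ in all cases where $A''(1)=+\infty$ as well.
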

\begin{proof} By Lemma \ref{lem:etyyu}, equation \eqref{det1}
\ben
C_N:=\frac{\det({\sf Id}-\A_{[0,N]})}{\prod_{j=1}^{N+1}\A_{j,j-1}}&=& \sum_{s\in S^N} \left(\prod_{j=1}^{\ell(s)}(\Id-\A)_{s_{j-1}+1,s_j} \right)\left(\prod_{j\in[0,N-1] \setminus s} \A_{j+1,j}\right)a_0^{-N-1}\\
&=& \sum_{s\in S^N} \prod_{j=1}^{\ell(s)} \frac{(\Id-\A)_{s_{j-1}+1,s_j} }{a_0}.
\een
Since $(\Id-\A)_{x,y}=1_{x=y}-a_{y-x+1-1_{x=0}}$, we have
\be
C_N&=&\sum_{s\in S^N}\prod_{j=1}^{\ell(s)} \frac{1_{s_j-s_{j-1}=1}-a_{s_j-(s_{j-1}+1)+1-1_{s_{j-1}=0}}}{a_0}.
\ee
This is a kind of product of transitions that measures the passage from $s_{j-1}$ to $s_j$.
For the increments except the first one, define
\[t_\delta =\frac{1_{\delta=1}-a_{\delta}}{a_0},~~\delta\geq 1\]
and for the first increment, define
\[t^f_{\delta}= \frac{(\Id-\A)_{0,\delta}}{a_0}= \frac{1_{\delta =0}-a_\delta}{a_0}, \delta \geq 0.\]
Consider the generating functions which encode the increments of the sequence $(s_j,j\geq 0)$
\be
G_f(x)&=&\sum_{\delta\geq 0}t^f_\delta x^\delta= \frac{1-a_0}{a_0}-\sum_{s>0} \frac{a_{s}}{a_0}x^s,\\
G(x)&=&\sum_{\delta\geq 1}t_\delta x^\delta=  \frac{1-a_1}{a_0}x-\sum_{s>1} \frac{a_{s}}{a_0}x^s.
\ee
Now, $C_N=[x^{N}]G_f(x)/(1-G(x))$ (notation for the extraction of the coefficient of $x^N$ in the generating function $G_f(x)/(1-G(x))$) so that
\[\sum_{N\geq 0} C_N=  \lim_{x\to 1} G_f(x)/(1-G(x));\]
the sum of the coefficients of a series is obtained by a simple evaluation at 1 but only when the power series converge at this point. Since  $G^f(1)=0$ and $1-G(1)=0$, we apply the L'hôpital rule, which says
\[\sum_{N\geq 0} C_N= \lim_{x\to 1} \frac{G_f(x)}{1-G(x)}= \lim_{x\to 1}\frac{ G'_f(x)}{-G'(x)}.\]
We have,
\be
G'_f(1)&=&-\sum_{s>0} \frac{a_s s}{a_0} =-\frac{m}{a_0},\\
-G'(1) &=& -\l[(1-a_1)-\sum_{s>1} sa_s\r]/a_0=-\frac{1-m}{a_0};
\ee
therefore $\sum_{N\geq 0} C_N= {m}/({1-m})$,      
from what we see that this converges iff $m<1$, which is then the sufficient and necessary condition for positive recurrence.
\end{proof}

\subsubsection{Recurrence criterion}

\begin{pro}The transition matrix ${\sf A}$ is positive recurrent if and only if $m\leq 1$.
\end{pro}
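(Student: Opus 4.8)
The plan is to read off the recurrence condition of Theorem \ref{theo:dfdqdq} for the \sut{} irreducible matrix $\A$ and to show that it is equivalent to $m\le 1$. Since $\A_{1,0}=a_0$ and $\A_{i,j}=a_{j-i+1}$ for every $i\ge 1$, all principal blocks $\A_{[k,\ell]}$ with $k\ge 1$ are the \emph{same} banded (upper-Hessenberg) Toeplitz matrix, depending only on $\ell-k$. Writing $d_n:=\det(\Id-\A_{[1,n]})$ with $d_0=1$, we therefore have $\det(\Id-\A_{[1,b-1]})=d_{b-1}$ and $\det(\Id-\A_{[2,b-1]})=d_{b-2}$, so that the quantity in \eqref{eq:recU} becomes
\[ u_b(1)=a_0\,\frac{d_{b-2}}{d_{b-1}}, \]
and $\A$ is recurrent iff $u_b(1)\to 1$.

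Before computing anything I would record two soft facts that together bypass any delicate Tauberian argument. First, each $\A_{[1,n]}$ is an irreducible substochastic block, so $\Id-\A_{[1,n]}$ is invertible and (by the forest positivity of \Cref{pro:pos}, or by the spectral radius being $<1$) one has $d_n>0$. Second, $u_b(1)=\P(\tau_{\{0\}}(Y)<\tau_{[b,+\infty)}(Y)\mid Y_0=1)$ is non-decreasing in $b$, since enlarging $b$ only enlarges the favourable event; hence it converges to some limit $L=\P(\tau_{\{0\}}(Y)<+\infty\mid Y_0=1)\in(0,1]$, and recurrence is exactly $L=1$. Consequently $d_{b-1}/d_{b-2}=a_0/u_b(1)\to a_0/L$, so the positive sequence $(d_n)$ has exponential growth rate $a_0/L$; equivalently the generating function $D(x):=\sum_{n\ge 0}d_n x^n$ has radius of convergence $R=L/a_0$.

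Next I would compute $D$ explicitly. Applying the expansion \eqref{det0} of Lemma \ref{lem:etyyu} to $\Id-\A_{[1,n]}$ (reindexed to start at $0$) yields the linear recurrence $d_n=d_{n-1}-\sum_{j=0}^{n-1}a_{j+1}\,a_0^{\,j}\,d_{n-1-j}$, which sums to
\[ D(x)=\frac{a_0}{\Phi(a_0x)-a_0x},\qquad \Phi(y):=\sum_{k\ge 0}a_k y^k. \]
Because the $d_n$ are positive, Pringsheim's theorem identifies $R$ with the smallest positive singularity of $D$, namely the smallest positive root of $\Phi(a_0x)=a_0x$; writing $q$ for the smallest positive root of $\Phi(y)=y$ (the extinction probability of a Galton--Watson process with offspring law $(a_k)$), this gives $R=q/a_0$. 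Matching with $R=L/a_0$ forces $L=q$, i.e. the escape probability $\lim_b u_b(1)$ equals $q$.

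Finally, $\Phi$ is convex and increasing on $[0,1]$ with $\Phi(0)=a_0>0$, $\Phi(1)=1$ and $\Phi'(1)=m$, so the classical branching dichotomy gives $q=1$ iff $m=\Phi'(1)\le 1$, and $q<1$ iff $m>1$. Since recurrence is $L=q=1$, this holds precisely when $m\le 1$, which establishes the proposition. The step I expect to need the most care is the identification $\lim_b u_b(1)=q$: the monotone convergence of $u_b(1)$ removes any oscillation worry, but one still must argue that $R$ is the smallest root of the denominator rather than a singularity of $\Phi$ itself — here positivity of $(d_n)$ and the fact that $\Phi$ is finite and real-analytic on $[0,q]\subset[0,1]$ are used — and one must treat the critical case $m=1$, where $x=1/a_0$ is a double zero of $\Phi(a_0x)-a_0x$ (a double pole of $D$), so that $d_n$ grows like $n\,a_0^{\,n}$ yet $d_n/d_{n-1}\to a_0$ still holds, keeping the boundary case recurrent.
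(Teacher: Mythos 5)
Your proof is correct and is essentially the paper's own argument in light disguise: your generating function $D(x)=a_0/(\Phi(a_0x)-a_0x)$ is exactly the paper's $1/(1-G_\J(x))$ (since $1-G_\J(x)=(\Phi(a_0x)-a_0x)/a_0$), and both proofs combine the recurrence criterion $u_b(1)=a_0\,d_{b-2}/d_{b-1}$ from \Cref{theo:dfdqdq}, the monotonicity of $u_b(1)$ plus the ratio test to identify the radius of convergence, and the convexity dichotomy for the power series at $1/a_0$. The only cosmetic differences are that you derive $D$ from the linear recurrence given by \eqref{det0} rather than the path expansion \eqref{det1}, invoke Pringsheim where the paper argues via the signs of the coefficients of $G_\J$, and name the limit $L$ as the Galton--Watson extinction probability $q$, which is a pleasant probabilistic gloss on the same intermediate-value-theorem step.
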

\begin{proof}
Let $\alpha_N:=\det((\Id-\J)_{[0,N]})$ for 
\[\J = \begin{bmatrix}
a_1 & a_2 & a_3 & a_4 & \cdots\\
a_0 & a_1 & a_2 & a_3 & \cdots \\
0   & a_0 & a_1 & a_2 & \cdots \\
0   & 0   & a_0 & a_1 & \cdots \\
\vdots   &  \vdots &  \vdots &  \vdots &  \vdots 
 \end{bmatrix}.\]
where $\A$ is the matrix presented in the previous section.
Since $(\Id-\A)_{[2,N]} = (\Id-\A)_{[1,N-1]}= (\Id-\J)_{[0,N-2]}$, by \Cref{theo:dfdqdq}
\begin{align}\label{equiv}
\A \textrm{~is~recurrent~} \equi \l(\lim_{N\rightarrow \infty}a_0\frac{\alpha_{N-1}}{\alpha_N} = 1\r).
\end{align}
By Lemma \ref{lem:etyyu}
\ben
\alpha_N &=& \sum_{s\in S^N} \Big(\prod_{j=1}^{\ell(s)}(\Id-\J)_{s_{j-1}+1,s_j} \Big)\Big(\prod_{j\in[0,N-1] \setminus s} \J_{j+1,j}\Big)\\
&=& \sum_{s\in S^N} \prod_{j=1}^{\ell(s)} (\Id-\J)_{s_{j-1}+1,s_j}a_0^{s_j-s_{j-1}-1}. 
\een
Since $(\Id-\J)_{x,y}=1_{x,y}-a_{y-x+1}$, we have
\be
\alpha_N&=&\sum_{s\in S^N}\prod_{j=1}^{\ell(s)} \left(1_{s_j-s_{j-1}=1}-a_{s_j-(s_{j-1}+1)+1}\right)a_0^{s_{j}-s_{j-1}-1}.
\ee
Again, this is a kind of product of transitions that weight the passage from $s_{j-1}$ to $s_j$.
We set
\[t_\delta =\left(1_{\delta=1}-a_{\delta}\right)a_0^{\delta-1},~~\delta\geq 1\]
Consider the generating function which encodes the increments of the sequence $(s_j,j\geq 0)$
\be
G_\J(x)&=&\sum_{\delta\geq 1}t_\delta x^\delta=  \frac{(1-a_1)}{a_0}(a_0x)-\sum_{s>1} \frac{a_{s}}{a_0}\left(a_0x\right)^s.
\ee
Now, $\alpha_N = [x^N]\l({1}/({1-G_\J(x))}\right)$.
    By \Cref{theo:dfdqdq}, $a_0\frac{\alpha_{N-1}}{\alpha_N}$ is non decreasing in  $N$ and bounded by 1, so that  $\lim_Na_0\frac{\alpha_{N-1}}{\alpha_N}$ exists, and better than that it is equal to $\sup_N a_0\frac{\alpha_{N-1}}{\alpha_N}=S\leq 1$. A standard result of calculus (the ratio test) applies:  
    the radius $r$ of convergence of $1/(1-G_\J)$ satisfies 
    \[1/r =a_0/S.\]
This allows to complete a step in our reasoning:
   \[
    \A \textrm{~is~recurrent~}\equi \l(\lim_Na_0\frac{\alpha_{N-1}}{\alpha_N}= 1\r)\equi \l(r= 1/a_0\r).\]
Now notice that the function $1/(1-z)$ has a radius of convergence 1 and since $G_\J$ has radius of convergence at least $1/a_0$, the function $1/(1-G_\J(x))$ has radius of convergence given by the
\be
\overline{R}&:=_{(a)}&(1/a_0) \wedge \inf\{|x|,  G_\J(x)=1\}\\
&=_{(b)}&(1/a_0) \wedge \inf\{x>0,  G_\J(x)=1\}\\
&=_{(c)}& \inf\{x>0,  G_\J(x)=1\}
\ee
Equality $(a)$ follows the preceding discussion, equality $(b)$ follows the fact that the coefficients of $G_\J$ are non positive (from $a_2/a_s$), and equality $(c)$ holds because $G_\J(1/a_0)=1$.
We infer that
\begin{align}\label{equiv23}
\A\textrm{~is~recurrent~} \equi \overline{R}  = 1/a_0.
\end{align}
To finish this sequence of equivalence, it is enough to prove that
\[\overline{R} = 1/a_0 \equi m\leq 1.\]
Start by noting that $G_\J(1/a_0) = (1/a_0)(1-\sum_{s\geq 1}a_s) = 1$.
Now $G_\J'(x) = 1-\sum_{s\geq 1}sa_s(a_0x)^{s-1}$ and therefore $G_\J'(1/a_0) = 1-m$.

If $m>1$, then the function $G_\J$  is locally decreasing at $1/a_0$. Hence, there exists some $\varepsilon>0$ such that $G_\J(1/a_0-\varepsilon)> 1$, which implies together with $G_\J(0)=0$ and the intermediate value theorem, that $G_\J=1$ as at least one solution on $[0,1/a)$ which implies that  $\overline{R}<1/a_0$. (transient).

If $m\leq 1$, then $0\leq \sum_{s\geq 1}sa_s(a_0x)^{s-1} \leq \sum_{s\geq 1}sa_s=m$ when $0\leq x \leq 1/a_0$ so that $G'_\J(x)\geq 1-m\geq 0$ on $[0,1/a_0]$ (in fact, $G'_\J>0$ on $[0,1/a_0)$). Since $G_\J(1/a_0)=1$ and $G_\J(0)=0$, and $G_\J$ is increasing monotone on $[0,1/a_0)$, then $\overline{R}=1/a_0$.
\end{proof}

\section{Appendix}

\subsection{continuous-time counterparts}
\label{sec:CTC}
A continuous Markov process $X=(X_t,t\geq 0)$ is a continuous-time process described by means of a generator $\GG = (\GG_{i,j}: i,j\in S)$, where $S$ ($\N$ for us) is the state space, and which satisfies $\GG_{i,i} = -\sum_{j\neq i}\GG_{i,j}$ (each of these sums being finite), so that each row of $\GG$ sums up to zero. The value $\GG_{i,j}$ is a rate (for $i\neq j$), and can be seen as the parameter of an exponential distribution: it is the jump rate for the process, when its value is $i$, at which it jumps at $j\neq i$. For more information on this type of process see \cite{N98,P08}. An invariant measure $\pi^C$ for the continuous Markov process chain is a non-negative measure satisfying $\pi^C \GG = 0$.

The jump process associated with $X$ is the discrete-time Markov chain $Y= (Y_k,k\geq 0)$, defined by
  \[Y_k=X(\tau_k),~~\textrm{ for } k\geq 0\]
  where $\tau_0=0$, and for $k\geq 1$, $\tau_k=\inf\{t:t>\tau_{k-1}, X_t\neq X_{\tau_{k-1}}\}$, that is the $k$th jump time of $X$. The transition matrix of $Y$ is $\M=(\M_{i,j}:i,j\in \mathbb{N})$ defined as
\[
	\M_{i,j} = -\GG_{i,j}/\GG_{i,i}, \quad \forall i\neq j \qquad \text{ and }\qquad \M_{i,i}=0 \quad \forall i\in \N.
\]
The properties of positive recurrence, null recurrence and transience are inherited from the jump chain to the continuous chain under non-explosion assumptions (Theorem 3.4.1. and Theorem 3.5.3 in \cite{N98}). Also the knowledge of the (or an) invariant measure of one of these processes (either of $Y$ or of $X$) allows one to deduce the corresponding invariant measure of the other, by using:
\[
	\pi_i = -\pi_i^{C} \GG_{i,i}, \textrm{ for all } i \geq 0.
\]
To finish it is important to notice that: \sut{} (\slt{}) $\GG$ iff that $\M$ is \sut{} (\slt{}).
For this reason, our results apply to continuous Markov processes with \slt{} and \sut{} generator matrices $\GG$.

\subsection{BDP and orthogonal polynomials}
\label{sec:ortho-pol}

Karlin \& McGregor approach relies on the study of the spectral properties of the tridiagonal transition matrix $\TTT$ (see notation in \eref{eq:MM}) and its connection with a family of orthogonal polynomials $(Q_i, i\geq 0)$ defined as follows: set $Q_0(x)=1$, $p_0Q_1(x)=x-r_0$, and
\be
xQ_j(x)&=&q_j Q_{j-1}(x)+r_jQ_j(x)+p_jQ_{j+1}(x),~j\geq 1;
\ee
and this can be rewritten in the following form
\ben\label{eq:grht} Q(x):=\begin{bmatrix} Q_0(x) & Q_1(x)&Q_2(x)&\cdots \end{bmatrix}^{t},~~~xQ(x)= \TTT Q(x).\een 
 Observe that $Q(x)$ is then an eigenvector of $\TTT$ associated with the eigenvalue $x$.

Karlin \& McGregor \cite{KmG} prove that there exists a unique measure $\psi$ on $[-1,1]$ for which the family $(Q_i, i\geq 0)$ forms an orthogonal family (more precisely $\pi_j\int_{-1}^1 Q_i(x)Q_j(x)d\psi(x)=\1_{i=j}$, where $\pi$ is the invariant measure of $\TTT$), and further
\ben(\TTT^{n})_{i,j}=\pi_j\int_{-1}^1 x^n Q_i(x)Q_j(x)\,d\,\psi(x).\een

Their approach is somehow more natural in the continuous settings: define the transition rate matrix
\ben
{\sf G}=
\begin{bmatrix}
  -(\lambda_0+\mu_0)  & \lambda_0  & 0  & 0 & 0 & \cdots \\
  \mu_1 & -(\lambda_1+\mu_1)   & \lambda_1  & 0 & 0 & \cdots \\
  0  & \mu_2 & -(\lambda_2+\mu_2)   & \lambda_2  & 0 & \cdots \\
  0  & 0 & \mu_3 & -(\lambda_3+\mu_3)   & \lambda_3  & \cdots  \\
  \vdots  & \vdots & \ddots & \ddots & \ddots & \ddots 
\end{bmatrix},
\een and a second family of orthogonal polynomials $(\tilde{Q}_i, i\geq 0)$ as follows:
\ben\label{eq:pol}
\bpar{ccl}-x\tilde{Q}_0(x)&=& -(\lambda_0+\mu_0)\tilde{Q}_0(x)+\lambda_0 \tilde{Q}_1(x),\\
-x\tilde{Q}_n(x)&=& \mu_n\tilde{Q}_{n-1}(x)-(\lambda_n+\mu_n)\tilde{Q}_n(x)+\lambda_{n}\tilde{Q}_{n+1}(x),\\
\tilde{Q}_0(x)&\equiv&1
\epar\een
so that \eqref{eq:pol} can be rewritten as
\ben\label{eq:grht2} -x\tilde Q(x)= {\sf G} \tilde Q(x).\een  
These polynomials are the orthogonal polynomials of a solvable Stieljes moment problem associated with a regular probability measure $\Psi$ on $[0,+\infty)$.  There exists a unique measure $\Psi$ on $[0,+\infty)$ for which the $(\tilde Q_i, i\geq 0)$ forms an orthogonal family (\cite{KmG}), more precisely, $\pi_j^C\int_{-1}^1 \tilde Q_i(x)\tilde Q_j(x)d\Psi(x)=\1_{i=j}$, where $\pi^C$ is the explicitly known invariant measure of the continuous-time process.

Set $P'(t)= P(t)\GG$,
for $t\geq 0$ and $P(0)=\Id$. The matrix $P(t)$ is the transition matrix of the continuous-time BD process, and $P_{i,j}(t)$ is   the probability that the state of the chain is $j$ at time $t$ given that it started at time 0 in state $i$ (a detail: $\mu_0$ is not assumed to be 0, the case were absorption at 0 may occur is included). Then Karlin \& McGregor defined ``formally''
\ben\label{eq:djrjdg}
f_i(x,t)=\sum_{j\geq 0} P_{i,j}(t)\tilde Q_j(x)\een
and in vectorial notation
\[f(x,t)= P(t)\tilde Q(x)\imp \partial f(x,t)/\partial t= P'(t)\tilde Q(x)=P(t)\GG\tilde Q(x)=-xf(x,t),\]
subject to the initial condition $f(x,0)=\tilde Q(x)$. From here $f(x,t)=\exp(-xt)\tilde Q(x)$, so that
\[f_i(x,t)=\exp(-xt)\tilde Q_i(x).\]
Now, reinterpret $f_i(x,t)$ on the $\L^2$ space in which we are working in, equipped with its basis of orthogonal polynomials $(\tilde Q_j(x),j\geq 0)$. The extraction of $P_{i,j}(t)$ in \eref{eq:djrjdg} can be done using the orthogonality of the $\tilde Q_j$'s,
\[\P_{i,j}(t)= \l[\int f_i(x,t) \tilde Q_j(x) d\Psi(x)\r]/\l[ \int \tilde Q_j(x)^2 d\Psi(x)\r] \]
(they set $ \int \tilde Q_j(x)^2 d\Psi(x) = 1/{\pi^C}_j$).

Main point in the construction: the orthogonality of the polynomials, means, since $\tilde Q_0(x)=1$, and $\int \tilde Q_0d\Psi=1$, that $\int \tilde Q_jd\Psi=0$ for $j\geq 1$, the moments $\int x_n d\Psi$ can be expressed in the $\tilde Q_n$ (since $\tilde Q_n$ has degree $n$)

Karlin \& McGregor constructed their study by establishing a correspondence
 between the set of matrices $\GG$ of continuous-time BD processes, and the set of solvable Stieltjes moment problem. From here, the measure $\Psi$ encodes somehow in an indirect way the polynomials $(Q_i,i\geq 0)$ (as a transform), which are solution to $-x\tilde Q(x)=\GG\tilde Q(x)$ and then they encode the spectral properties of $\GG$, which drives the behaviour of $P(t)$ (by \eref{eq:djrjdg}). The extraction of recurrence criterion from here (see \cite[p.370 - 376]{KmG2}) is done by expressing the recurrence in terms of a certain property of $\Psi$, which in turns, are shown to be expressible in terms of the coefficients of $\GG$ (which provides the criteria given at the beginning of Section \ref{sec:Intro}, in the discrete-time version of the BD process). 

The methods developed by Karlin \& McGregor are really elegant and satisfying from a theoretical point of view. These methods connect probability theory, algebra, measure theory (specifically the ``moment problem'') and the theory of orthogonal polynomials. However, the focus made on the map $\GG\mapsto \Psi$, which is something that can be compared with the recourse of Fourier transform in other fields of probability theory, has the effect to lock a bit these studies, and this, for two reasons. The first one is that the correspondence is exact, so that, these tools are not simply available for any extension of the class of BD processes. Secondly, the measures $\Psi$ are in general not known, nor computable, so that, $\Psi$ is used as a formal encoding tool, rather that as a computing tool that helps to make some computation : there is just a handful of important cases in which it can be computed (see e.g. Schoutens \cite{MR1761401}). The approach we propose is different since it is centred on direct computations of quantity of interests. Limitations exist, but they are not the same at all. The criterion of recurrence/transience we provide, do not rely on the moments computation of any measure.

\small
\bibliographystyle{abbrv}

\bibliography{mybib}

\begin{thebibliography}{10}

\bibitem{MR1136432}
L.~Abolnikov and A.~Dukhovny.
\newblock Markov chains with transition delta-matrix: ergodicity conditions,
  invariant probability measures and applications.
\newblock {\em J. Appl. Math. Stochastic Anal.}, 4(4):333--355, 1991.

\bibitem{Al90}
D.~J. Aldous.
\newblock The random walk construction of uniform spanning trees and uniform
  labelled trees.
\newblock {\em SIAM Journal on Discrete Mathematics}, 3(4):450--465, 1990.

\bibitem{B13}
P.~Br{\'e}maud.
\newblock {\em Markov chains: Gibbs fields, Monte Carlo simulation, and
  queues}, volume~31.
\newblock Springer Science \& Business Media, 2013.

\bibitem{MR677553}
P.~J. Brockwell, J.~Gani, and S.~I. Resnick.
\newblock Birth, immigration and catastrophe processes.
\newblock {\em Adv. in Appl. Probab.}, 14(4):709--731, 1982.

\bibitem{Bro89}
A.~Z. Broder.
\newblock Generating random spanning trees.
\newblock In {\em FOCS, vol. 89}, pages 442--447, 1989.

\bibitem{CF}
P.~Cartier and D.~Foata.
\newblock {\em Problèmes combinatoires de commutation et réarrangements}.
\newblock Springer-Verlag, Lecture notes in mathematics, 1969.

\bibitem{FG}
P.~Flajolet and F.~Guillemin.
\newblock The formal theory of birth-and-death processes, lattice path
  combinatorics and continued fractions.
\newblock {\em Adv. in Appl. Probab.}, 32(3):750--778, 2000.

\bibitem{LFJFM}
L.~Fredes and J.-F. Marckert.
\newblock Aldous-broder theorem: extension to the non reversible case and new
  combinatorial proof, 2021.

\bibitem{HLT21}
Y.~Hu, R.~Lyons, and P.~Tang.
\newblock A reverse aldous--broder algorithm.
\newblock In {\em Annales de l'Institut Henri Poincar{\'e}, Probabilit{\'e}s et
  Statistiques}, volume 57 - 2, pages 890--900. Institut Henri Poincar{\'e},
  2021.

\bibitem{MR3436782}
S.~Kapodistria, T.~Phung-Duc, and J.~Resing.
\newblock Linear birth/immigration-death process with binomial catastrophes.
\newblock {\em Probab. Engrg. Inform. Sci.}, 30(1):79--111, 2016.

\bibitem{KmG2}
S.~Karlin and J.~McGregor.
\newblock The classification of birth and death processes.
\newblock {\em Trans. Amer. Math. Soc.}, 86:366--400, 1957.

\bibitem{KmG}
S.~Karlin and J.~McGregor.
\newblock The differential equations of birth-and-death processes, and the
  {S}tieltjes moment problem.
\newblock {\em Trans. Amer. Math. Soc.}, 85:489--546, 1957.

\bibitem{CK}
C.~Krattenthaler.
\newblock The theory of heaps and the cartier–foata monoid, 2006.

\bibitem{Lawler1999}
G.~F. Lawler.
\newblock {\em Loop-Erased Random Walk}, pages 197--217.
\newblock Birkh{\"a}user Boston, Boston, MA, 1999.

\bibitem{M20}
C.~R. MacCluer.
\newblock The many proofs and applications of perron's theorem.
\newblock {\em Siam Review}, 42(3):487--498, 2000.

\bibitem{Marchal}
P.~Marchal.
\newblock {Loop-Erased Random Walks, Spanning Trees and Hamiltonian Cycles}.
\newblock {\em Electronic Communications in Probability}, 5(none):39 -- 50,
  2000.

\bibitem{N98}
J.~R. Norris.
\newblock {\em Markov chains}.
\newblock Cambridge university press, 1998.

\bibitem{P08}
E.~Pardoux.
\newblock {\em Markov processes and applications: algorithms, networks, genome
  and finance}, volume 796.
\newblock John Wiley \& Sons, 2008.

\bibitem{MR2340220}
P.~Pollett, H.~Zhang, and B.~J. Cairns.
\newblock A note on extinction times for the general birth, death and
  catastrophe process.
\newblock {\em J. Appl. Probab.}, 44(2):566--569, 2007.

\bibitem{MR1761401}
W.~Schoutens.
\newblock {\em Stochastic processes and orthogonal polynomials}, volume 146 of
  {\em Lecture Notes in Statistics}.
\newblock Springer-Verlag, New York, 2000.

\bibitem{VX}
G.~X. Viennot.
\newblock Heaps of pieces, i: Basic definitions and combinatorial lemmas.
\newblock In {\em Combinatoire {\'e}num{\'e}rative}, pages 321--350. Springer,
  1986.

\bibitem{Wil96}
D.~B. Wilson.
\newblock Generating random spanning trees more quickly than the cover time.
\newblock In {\em Proceedings of the twenty-eighth annual ACM symposium on
  Theory of computing}, pages 296--303, 1996.

\bibitem{Z85}
D.~Zeilberger.
\newblock A combinatorial approach to matrix algebra.
\newblock {\em Discrete Mathematics}, 56(1):61--72, 1985.

\end{thebibliography}

\newpage
\tableofcontents

\end{document}